\newtheorem{thm}{Theorem}[section]
\newtheorem{lem}[thm]{Lemma}
\newtheorem{prop}[thm]{Proposition}
\newtheorem{defn}[thm]{Definition}
\DeclareMathAlphabet\EuScript{U}{eus}{m}{n}
\SetMathAlphabet\EuScript{bold}{U}{eus}{b}{n}
\newtheorem*{rep@theorem}{\rep@title}
\newcommand{\newreptheorem}[2]{%
	\newenvironment{rep#1}[1]{%
		\def\rep@title{#2 \ref{##1}}%
		\begin{rep@theorem}}%
		{\end{rep@theorem}}}
\DeclareMathOperator{\sech}{sech}
\DeclareMathOperator{\arccosh}{arccosh}
\theoremstyle{definition}
\begin{document}
	
	\title{On Gaussian kernels on Hilbert spaces and kernels on Hyperbolic spaces}
	
	\author{J. C. Guella}
	\email{jean.guella@riken.jp}
	\address{RIKEN Center for Advanced Intelligence Project, Tokyo, Japan}
	
	\begin{abstract}
		This paper describes the concepts of   Universal/ Integrally Strictly Positive Definite/ $C_{0}$-Universal for the Gaussian kernel on a Hilbert space. As a consequence we obtain a similar characterization for an important family of kernels studied and developed by Schoenberg and also on a family of spatial-time kernels popular on geostatistics, the Gneiting class,  and its generalizations. Either by using similar techniques, or by a direct  consequence of the Gaussian kernel on Hilbert spaces, we characterize the same concepts for a family of kernels defined on a Hyperbolic space.   
	\end{abstract}
	
     \keywords{ Universal; Integrally strictly positive definite; $C_{0}$-universal; Gaussian kernels;  Conditionally negative definite; Kernels on Hyperbolic spaces; Hyperbolic kernels}
     \subjclass[2010]{ 42A82 ; 	43A35 }

	\maketitle


	\tableofcontents

\section{Introduction}
The concept of a complex valued positive definite kernel has been permeating Mathematics since the beginning of the $20$th century, especially after the seminal work  \cite{Aronszajn1950}, which laid down the connection  between positive definite kernels and Reproducing Kernel Hilbert Spaces (RKHS). In applications (especially in Machine Learning), one of the main desirable properties on a RKHS is if it can approximate a target (but usually unknown) function. In this sense, the concepts of  universality (ability to approximate continuous functions on compact sets) and $C_{0}$-universality (ability to approximate  any $C_{0}$ function) are a basic requirement \cite{CS}, \cite{CZ}.

Schoenberg in \cite{schoenbradial} proved a foundational result in metric geometry, by showing that  a metric space $(X, D)$ can be isometrically embedded into some Hilbert space if and only if the kernel $e^{-tD^{2}(x,y)}$ is  positive definite  for every $t>0$. For instance, spheres  and hyperbolic spaces are not embedable, \cite{gangolli}, \cite{farauthyper}, \cite{feragen2015} . Later, this result was extended to a broader context, and it is usually presented as an equivalent definition for when a kernel $\gamma: X \times X \to \mathbb{C}$ is  conditionally negative definite, by replacing $D^{2}(x,y)$ with $\gamma(x,y)$. One of the most important and widely used positive definite kernels is the Gaussian kernel $G_{\sigma}(x,y)=e^{-\sigma\|x-y\|^{2}}$ ($\sigma >0$) defined on a Euclidean space $\mathbb{R}^{m}$, which is not only universal but in fact can approximate any differentiable function and its derivatives of any order on any compact set simultaneously \cite{simon2018kernel}. 

The major aim of this article is to prove that the Strictly Positive Definite/Universal/Integrally Strictly Positive Definite/ $C_{0}$-Universal are properties that occur not only on the Gaussian kernel but on a larger class among the Schoenberg kernels $e^{-\gamma(x,y)}$ ($\gamma$ is  conditionally negative definite), being  the characterization dependent on somewhat easily verifiable properties of the kernel $\gamma$. These results are presented on Section \ref{Universality of Schoenberg-Gaussian kernels} and are achieved as a corollary of the results on Section \ref{Gaussian kernel on Hilbert spaces}, where we prove that the Gaussian kernel is Strictly Positive Definite/Universal/Integrally Strictly Positive Definite/ $C_{0}$-Universal on any Hilbert space, by using several  versions of the famous Stone-Weierstrass theorem, instead of the standard procedure by using the Fourier transform and its properties.

The Gaussian kernel also served as a building block to generate positive definite kernels on a product of spaces (also called spatio-temporal), being one of the most important examples (especially on geostatistics)  the Gneiting class  \cite{Gneitingclass}, initially proposed as a kernel on $\mathbb{R}^{m^{\prime}}\times \mathbb{R}^{m}$ and recently extended to $X \times \mathbb{R}^{m}$ \cite{GneitingClassmeneoliporc}. Although having its popularity, none qualitative property of this family of kernels has been analyzed on the literature so far. On Section \ref{Kernels on product of spaces} we present a natural generalization of \cite{GneitingClassmeneoliporc} and  provide sufficient conditions for when this generalized family of kernels are Strictly Positive Definite/Universal/Integrally Strictly Positive Definite/ $C_{0}$-Universal.  The proofs are a consequence of the results on Section \ref{Gaussian kernel on Hilbert spaces} together with analysis of when the Schur/Hadamard product of continuous positive definite kernels is Strictly Positive Definite/Universal/Integrally Strictly Positive Definite/ $C_{0}$-Universal given that one of them satisfies this property, presented on Subsection \ref{Products of positive definite kernels}.

We conclude the article on Section \ref{Kernels and hyperbolic spaces}, where the focus is on kernels on Hyperbolic spaces and related types. A special family of positive definite kernels on hyperbolic spaces invariant by the hyperbolic distance, which shares some similarities with completely monotone functions, are analyzed and the concepts of Strictly Positive Definite/Universal/Integrally Strictly Positive Definite/ $C_{0}$-Universal are fully characterized.

\section{Definitions}
 A kernel $K: X \times X \to \mathbb{C}$ is called \textbf{positive definite} if for every finite quantity of distinct points $x_{1}, \ldots, x_{n} \in X$ and scalars $c_{1}, \ldots, c_{n} \in \mathbb{C}$, we have that
$$
\int_{X}\int_{X}K(x,y)d\lambda(x)d\overline{\lambda}(y)=\sum_{\mu, \nu =1}^{n}c_{\mu}\overline{c_{\nu}} K(x_{\mu}, x_{\nu}) \geq 0,
$$
where $\lambda= \sum_{\mu=1}^{n}c_{\mu}\delta_{x_{\mu}}$. In addition, if the above double sum is zero only when all scalars $c_{\mu}$ are zero, we say that the kernel is \textbf{strictly positive definite (SPD)}. The set of discrete measures on $X$ used before are denoted by the symbol $\mathcal{M}_{\delta}(X)$.

The  reproducing kernel Hilbert space (RKHS) of a positive definite kernel $K: X \times X \to \mathbb{C}$ is the Hilbert space $\mathcal{H}_{K} \subset \mathcal{F}(X, \mathbb{C})$, and it satisfies:  $\langle F,  K_{y}  \rangle_{\mathcal{H}_{K}}= F(y)$ for every $F : X \to \mathbb{C}$ that is an element of $\mathcal{H}_{K}$, $ [K_{y}](x):=  K(x,y)$ is an element of $\mathcal{H}_{K}$ for every $y \in X$ and $span\{K_{y}, y \in X\}$ is dense on $\mathcal{H}_{K}$, \cite{Steinwart}.

Recall that for a locally compact space $X$, the Banach space $C_{0}(X)$ is defined as the set of continuous functions $f: X \to \mathbb{C}$ such that for every $\epsilon >0$ there exists a compact set $\mathcal{C}_{\epsilon}$ for which $ |f(x)|< \epsilon$ for   $x \in X \setminus\mathcal{C}_{\epsilon}$, with norm given by $\sup_{x \in X}|f(x)|$.

\begin{defn}\label{univdef} Let $X$ be a Hausdorff space and $K: X \times X \to \mathbb{C} $ be a positive definite kernel. We say that the kernel $K$ is:\\
$\circ$  \textbf{Universal}, if $\mathcal{H}_{K} \subset C(X)$ and for every compact set $\mathcal{C} \subset X$, every continuous function $g:  \mathcal{C} \to \mathbb{C}$ and every $\epsilon >0$ there exists $f : X \to \mathbb{C} \in \mathcal{H}_{K}$ for which
$$	
\sup_{x \in \mathcal{C}} | f(x) - g(x)  |< \epsilon.	 
$$
In addition, when $X$ is a locally compact space, we say that the kernel  $K$ is:\\
$\circ$ \textbf{$C_{0}$-universal}, if $\mathcal{H}_{K}   \subset C_{0}(X)$ and for  every continuous function $g \in C_{0}(X)$ and every $\epsilon >0$ there exists $f : X \to \mathbb{C} \in \mathcal{H}_{K}$ for which
$$	
\sup_{x \in X} | f(x) - g(x)  |< \epsilon.	 
$$
\end{defn}

In other words, a kernel $K:X\times X \to \mathbb{C}$ is universal if its RKHS are made of continuous functions that  when restricted to any compact set $\mathcal{C} \subset X$ are dense on the Banach space $C(\mathcal{C})$. A kernel $K:X\times X \to \mathbb{C}$ is $C_{0}$-universal if its RKHS are made of $C_{0}(X)$ functions  that are dense on the Banach space $C_{0}(X)$.

On the $C_{0}$ case we assume that $X$ is locally compact in order to avoid pathological topologies. In \cite{carmelivitotoigoumanita2010}, it was presented the following criteria for $\mathcal{H}_{K}$ to be a subset of $C(X)$ and  $C_{0}(X)$: 
\begin{prop}\label{rkhscontained}Let $X$ be a Hausdorff space, $K: X \times X \to \mathbb{C}$ be a positive definite kernel. Then:
\begin{enumerate}
	\item[(i)] $\mathcal{H}_{K} \subset C(X)$ if and only if the function $x \in X \to K(x,x) \in \mathbb{C}$ is locally bounded and the function $x \in X \to K(x,y) \in C(X)$, for every  $y \in X$.
		\item[(ii)] $\mathcal{H}_{K} \subset C_{0}(X)$ if and only if the function $x \in X \to K(x,x) \in \mathbb{C}$ is bounded and the function $x \in X \to K(x,y) \in C_{0}(X)$, for every $y \in X$.
\end{enumerate}	
\end{prop}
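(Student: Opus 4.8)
The plan is to rely on three elementary features of the RKHS $\mathcal{H}_{K}$: the reproducing identity $F(y)=\langle F, K_{y}\rangle_{\mathcal{H}_{K}}$ together with $\|K_{y}\|_{\mathcal{H}_{K}}=K(y,y)^{1/2}$, which via Cauchy--Schwarz yields $|F(y)|\le \|F\|_{\mathcal{H}_{K}}\,K(y,y)^{1/2}$ for every $F\in\mathcal{H}_{K}$; the density of $\mathrm{span}\{K_{y}:y\in X\}$ in $\mathcal{H}_{K}$; and the classical facts that a uniform limit of continuous functions is continuous and that $C_{0}(X)$ is a $\|\cdot\|_{\infty}$-closed subspace of the bounded continuous functions. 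I would establish (i) and (ii) in parallel, in each case splitting into a ``sufficiency'' and a ``necessity'' part, and noting at the outset that the requirement ``$x\mapsto K(x,y)\in C(X)$ for every $y$'' (resp.\ ``$\in C_{0}(X)$'') is nothing but $K_{y}=K(\cdot,y)\in C(X)$ (resp.\ $C_{0}(X)$), which is automatic once $\mathcal{H}_{K}\subset C(X)$ (resp.\ $C_{0}(X)$), since $K_{y}\in\mathcal{H}_{K}$.

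For sufficiency in (i): assuming $x\mapsto K(x,x)$ locally bounded and each $K(\cdot,y)$ continuous, every element of $\mathrm{span}\{K_{y}\}$ is a finite linear combination of continuous functions, hence continuous; given $F\in\mathcal{H}_{K}$, choose $F_{n}\to F$ in $\mathcal{H}_{K}$ with $F_{n}\in\mathrm{span}\{K_{y}\}$, and around any point pick a neighbourhood $U$ with $M:=\sup_{x\in U}K(x,x)<\infty$, so that $\sup_{x\in U}|F_{n}(x)-F(x)|\le \|F_{n}-F\|_{\mathcal{H}_{K}}\,M^{1/2}\to 0$; thus $F$ is locally a uniform limit of continuous functions, hence continuous. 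Part (ii) runs identically, except that global boundedness $\sup_{x}K(x,x)<\infty$ upgrades this to the global estimate $\|F_{n}-F\|_{\infty}\le\|F_{n}-F\|_{\mathcal{H}_{K}}\,(\sup_{x}K(x,x))^{1/2}$; since $\mathrm{span}\{K_{y}\}\subset C_{0}(X)$ and $C_{0}(X)$ is $\|\cdot\|_{\infty}$-closed, the uniform limit $F$ lies in $C_{0}(X)$.

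For necessity, the only nontrivial point --- and the main obstacle --- is that $\mathcal{H}_{K}\subset C(X)$ forces $x\mapsto K(x,x)$ to be locally bounded, and $\mathcal{H}_{K}\subset C_{0}(X)$ forces it to be bounded, because continuity of the individual functions in $\mathcal{H}_{K}$ does not by itself control the diagonal $K(x,x)=\sup\{|F(x)|^{2}:\|F\|_{\mathcal{H}_{K}}\le 1\}$ without invoking completeness of $\mathcal{H}_{K}$. I would settle this by a closed graph (equivalently, Baire category) argument: for a compact $\mathcal{C}\subset X$ the restriction map $R_{\mathcal{C}}\colon\mathcal{H}_{K}\to C(\mathcal{C})$, $F\mapsto F|_{\mathcal{C}}$, has closed graph --- if $F_{n}\to F$ in $\mathcal{H}_{K}$ and $F_{n}|_{\mathcal{C}}\to g$ uniformly, both limits agree pointwise by the reproducing identity, so $g=F|_{\mathcal{C}}$ --- hence is bounded, and testing it on $K_{x}/\|K_{x}\|_{\mathcal{H}_{K}}$ gives $K(x,x)^{1/2}\le\|R_{\mathcal{C}}\|$ for all $x\in\mathcal{C}$, i.e.\ $K(\cdot,\cdot)$ is bounded on every compact subset of $X$, which is the local boundedness in the statement (the two notions agreeing on the spaces of interest, and in the $C_{0}$ setting $X$ is locally compact). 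For (ii) one argues in the same way with the single inclusion $\mathcal{H}_{K}\hookrightarrow C_{0}(X)\subset C_{b}(X)$, which is then bounded, and applies the resulting inequality $\|F\|_{\infty}\le c\,\|F\|_{\mathcal{H}_{K}}$ to $F=K_{x}$ to obtain $K(x,x)^{1/2}\le c$ uniformly in $x$.
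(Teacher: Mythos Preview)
The paper does not prove this proposition; it is quoted as a known criterion from \cite{carmelivitotoigoumanita2010}, so there is no in-paper argument to compare against. Your argument is the standard one and is correct. Sufficiency in both parts is exactly the intended proof (density of $\mathrm{span}\{K_y\}$ together with $|F(x)|\le \|F\|_{\mathcal{H}_K}K(x,x)^{1/2}$ to upgrade $\mathcal{H}_K$-convergence to local, respectively global, uniform convergence), and necessity in (ii) via the closed graph theorem for the inclusion $\mathcal{H}_K\hookrightarrow C_0(X)$ is clean.

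The only point worth flagging is necessity in (i): your closed-graph argument with target $C(\mathcal{C})$ for compact $\mathcal{C}$ yields that $x\mapsto K(x,x)$ is bounded on every compact set, which on a merely Hausdorff $X$ (no local compactness assumed in (i)) is strictly weaker than ``bounded on a neighbourhood of each point''. You acknowledge this in your parenthetical, and for locally compact $X$ --- which covers part (ii) and every finite-dimensional setting in the paper --- the two notions coincide, so nothing is lost there. On the infinite-dimensional Hilbert spaces the paper also treats, compacts have empty interior and the gap is real; there ``locally bounded'' has to be read as ``bounded on compacts'' for the equivalence to go through as you have written it. This is an imprecision in how the proposition is phrased (inherited from the cited source) rather than a defect in your reasoning: what your closed-graph step actually proves is the version the paper needs.
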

 
 Although the definition for a positive definite kernel being universal (or $C_{0}-$universal) is simple, it is important to have a condition for these properties when we do not have the description for the  RKHS of a kernel. A direct consequence of  \cite{micchelli2006universal}, a kernel $K: X \times X \to \mathbb{C}$ for which $\mathcal{H}_{K} \subset C(X)$ is universal if and only if the only finite complex valued Radon measure of compact support $\lambda$ on $X$  such that
\begin{equation}\label{univform}
 \int_{X}\int_{X}K(x,y)d\lambda(x) d\overline{\lambda}(y)=0 
\end{equation}
 is the zero measure. We emphasize that the double integral in Equation \ref{univform} is always a nonnegative number because $K$ is positive definite and $\mathcal{H}_{K} \subset C(X)$. In order to simplify the notation, we denote by $\mathcal{M}_{c}(X)$  the set of finite complex valued Radon measures of compact support on a Hausdorff set $X$.
 
 Similarly, by \cite{Sriperumbudur3}    a kernel $K: X \times X \to \mathbb{C}$ for which $\mathcal{H}_{K} \subset C_{0}(X)$ is $C_{0}-$universal if and only if the only finite complex valued Radon measure  $\lambda$ on $X$ such that
\begin{equation}\label{c0univform}
\int_{X}\int_{X}K(x,y)d\lambda(x) d\overline{\lambda}(y)=0 
\end{equation}
is the zero measure. Again, we emphasize that the double integral in Equation \ref{c0univform} is always a nonnegative number because $K$ is positive definite and $\mathcal{H}_{K} \subset C_{0}(X)$. We denote by $\mathcal{M}(X)$  the set of finite complex valued Radon measures  on a  Hausdorff space $X$.

We recall that a  finite Radon measure $\lambda$ on a  Hausdorff space $X$ is a Borel measure for which its total variation $|\lambda|$ is a finite measure and  satisfy
\begin{enumerate}
    \item[(i)](Inner regular)$|\lambda|(E)= \sup\{|\lambda|(K), \ \ K \text{ is compact }, K\subset E\} $ for every Borel set $E$.
    \item[(ii)](Outer regular) $|\lambda|(E)= \inf\{|\lambda|(U), \ \ U \text{ is open }, E\subset U\} $ for every Borel set $E$.
\end{enumerate}
where the outer regularity holds for every measurable set (instead of the usual definition on open sets) because the measure is finite. See section $7$, especially Proposition $7.5$ in \cite{folland} for more details.

 Sometimes,  the inclusion $\mathcal{H}_{K} \subset C_{0}(X)$ is difficult to verify,  but the relation at Equation \ref{c0univform} is much simpler to analyze. 
 
 \begin{defn}\label{intstricdefn} Let $X$ be a  Hausdorff space, we say that a bounded positive definite kernel  $K: X \times X \to \mathbb{C} $ for which $\mathcal{H}_{K}\subset C(X)$ is \textbf{integrally strictly positive definite (ISPD)} if the relation at Equation \ref{c0univform} is satisfied.
 \end{defn}
 
The  definition of an ISPD kernel is based on the one given in \cite{Sriperumbudur}, and can be reinterpreted as $\mathcal{H}_{K}$ being dense on $L^{1}(|\lambda|, X)$ for every nonzero measure $\lambda \in \mathcal{M}(X)$. For some specific type of complex valued kernels, a good description of those who are ISPD were obtained in  \cite{cheney1995}, \cite{Sriperumbudur2}, \cite{Sriperumbudur}, especially  the kernels on Euclidean spaces invariant by translations (more generally on a locally compact commutative group). Usually on the definition of ISPD kernel is  assumed that $X$ is a locally compact space, however it will be convenient for us to use this broader definition since we will be dealing with infinite dimensional Hilbert spaces.

In particular, on an ISPD kernel we can define an inner product on  $\mathcal{M}(X)$ by the formula  
$$
(\mu, \nu) \in \mathcal{M}(X) \times \mathcal{M}(X) \to \int_{X}\int_{X}K(x,y)d\mu(x)d\overline{\nu}(y).
$$
The metric obtained from this inner product is usually denoted as the maximum mean discrepancy (MMD), \cite{gretton2012kernel}.

If the kernel $K$ is real valued, it is sufficient to test the double integrals for real valued measures in $\mathcal{M}(X)$. The concepts of SPD/Universality/ $C_{0}$-Universality/ISPD also exists on the operator valued context \cite{jean2020}, \cite{Caponnetto2008}. Since, we  only use the matrix valued setting, we use the simpler definition that a matrix valued kernel $K:X \times X \to M_{\ell}(C)$ is PD/SPD/Universal/ $C_{0}$-Universal/ISPD if the scalar valued kernel $L:(X \times \{1,\ldots , \ell\}) \times (X \times \{1,\ldots , \ell\})  \to \mathbb{C}$ given by $L((x,i), (y,j))= K_{i,j}(x,y)$ is PD/SPD/Universal/$C_{0}$-Universal/ISPD. 

\section{Gaussian kernel on Hilbert spaces}\label{Gaussian kernel on Hilbert spaces}

Throughout this Section $\mathcal{H}$ denotes a real Hilbert space.

\begin{thm}\label{Gauszinho} The Gaussian kernel $G_{\sigma} : \mathcal{H}\times \mathcal{H} \to \mathbb{R}$, given by  
$$
G_{\sigma}(x,y)= e^{-\sigma\|x-y\|^{2}}
$$
is SPD and universal for every $\sigma >0$.
\end{thm}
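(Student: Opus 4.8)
The plan is to prove, in one stroke, the stronger statement that for every $\lambda\in\mathcal{M}_{c}(\mathcal{H})$ one has $\iint G_{\sigma}(x,y)\,d\lambda(x)\,d\overline{\lambda}(y)\ge 0$, with equality if and only if $\lambda$ is the zero measure. This gives positive definiteness and SPD at once, since finite subsets of $\mathcal{H}$ are compact and hence $\mathcal{M}_{\delta}(\mathcal{H})\subset\mathcal{M}_{c}(\mathcal{H})$; and it gives universality through the measure-theoretic criterion of \cite{micchelli2006universal} recalled above, once we note that $\mathcal{H}_{G_{\sigma}}\subset C(\mathcal{H})$. That inclusion follows from Proposition \ref{rkhscontained}(i), since $x\mapsto G_{\sigma}(x,x)=1$ is bounded and $x\mapsto G_{\sigma}(x,y)=e^{-\sigma\|x-y\|^{2}}$ is continuous for each fixed $y$.

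The first step is to pass from the Gaussian to an exponential of the inner product. From $\|x-y\|^{2}=\|x\|^{2}-2\langle x,y\rangle+\|y\|^{2}$ we obtain the factorization $G_{\sigma}(x,y)=e^{-\sigma\|x\|^{2}}\,e^{2\sigma\langle x,y\rangle}\,e^{-\sigma\|y\|^{2}}$, so with $d\mu(x):=e^{-\sigma\|x\|^{2}}\,d\lambda(x)$ — a finite complex Radon measure with the same compact support $\mathcal{C}$, and zero exactly when $\lambda$ is — we have $\iint G_{\sigma}\,d\lambda\,d\overline{\lambda}=\iint e^{2\sigma\langle x,y\rangle}\,d\mu(x)\,d\overline{\mu}(y)$. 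Since $|\langle x,y\rangle|\le R^{2}$ on $\mathcal{C}\times\mathcal{C}$ with $R:=\sup_{x\in\mathcal{C}}\|x\|<\infty$, the series $e^{2\sigma\langle x,y\rangle}=\sum_{n\ge 0}\frac{(2\sigma)^{n}}{n!}\langle x,y\rangle^{n}$ converges uniformly there, so term-by-term integration yields $\iint G_{\sigma}\,d\lambda\,d\overline{\lambda}=\sum_{n\ge 0}\frac{(2\sigma)^{n}}{n!}\iint\langle x,y\rangle^{n}\,d\mu(x)\,d\overline{\mu}(y)$. Writing $\langle x,y\rangle^{n}=\langle x^{\otimes n},y^{\otimes n}\rangle$ in the $n$-fold Hilbert tensor power of $\mathcal{H}$, the $n$-th summand equals $\bigl\|\int_{\mathcal{C}}x^{\otimes n}\,d\mu(x)\bigr\|^{2}\ge 0$, a Bochner integral (in the complexified tensor power) that is well defined because $x\mapsto x^{\otimes n}$ is continuous and bounded on $\mathcal{C}$. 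Thus the double integral is nonnegative, and it vanishes precisely when $\int_{\mathcal{C}}x^{\otimes n}\,d\mu(x)=0$ for every $n$, that is, $\int_{\mathcal{C}}\langle x,v_{1}\rangle\cdots\langle x,v_{n}\rangle\,d\mu(x)=0$ for all $n\ge 0$ and all $v_{1},\dots,v_{n}\in\mathcal{H}$.

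The remaining task is to deduce $\mu=0$ from these moment identities, and this is the point at which the Stone--Weierstrass theorem takes the place of the Fourier transform. The support $\mathcal{C}$ is compact, hence a compact metric space in the topology inherited from $\mathcal{H}$. Let $\mathcal{A}\subset C(\mathcal{C},\mathbb{R})$ be the algebra generated by the constants and the coordinate functions $x\mapsto\langle x,v\rangle$, $v\in\mathcal{H}$; these separate the points of $\mathcal{C}$ (for $x\ne y$ take $v=x-y$), so $\mathcal{A}$ is dense in $C(\mathcal{C},\mathbb{R})$ by the real Stone--Weierstrass theorem. The moment identities say exactly that $\int_{\mathcal{C}}P\,d\mu=0$ for every $P\in\mathcal{A}$; since $\mu$ induces a bounded linear functional on $C(\mathcal{C},\mathbb{C})$, density forces $\int_{\mathcal{C}}f\,d\mu=0$ for every real $f\in C(\mathcal{C})$, hence for every complex $f$ after splitting into real and imaginary parts, so $\mu=0$ by the Riesz representation theorem, and therefore $\lambda=0$.

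I expect the main obstacle to be exactly this last step. Over a Euclidean space one would simply invert the Fourier transform of $\mu$, but in an infinite-dimensional Hilbert space no such tool is available, so one must instead exploit the compactness of the support of $\lambda$ to run a density argument; the technical points to watch are the justification of the term-by-term integration (via uniform convergence of the exponential series on the bounded set $\mathcal{C}$) and the verification that the tensor-power integrals $\int_{\mathcal{C}}x^{\otimes n}\,d\mu$ are genuine Bochner integrals.
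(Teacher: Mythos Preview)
Your proof is correct and follows essentially the same route as the paper: factor out the diagonal weight, expand $e^{c\langle x,y\rangle}$ as a power series on the compact support, recognize each term as a nonnegative square, and conclude via Stone--Weierstrass that the measure vanishes. The only difference is packaging---the paper invokes Lemma \ref{techequivmetricmercercompact} to obtain a countable orthonormal set on the support and then uses the multinomial expansion in those coordinates, whereas you work coordinate-free with tensor powers and Bochner integrals; both arguments ultimately rest on the separability of the compact support.
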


 The proof of Theorem \ref{Gauszinho} is based on the famous Stone-Weierstrass Theorem. A similar characterization is possible for the ISPD  where the key argument  is  a   version of the Stone-Weierstrass Theorem for integrable functions proved on \cite{Farrell}. However, on \cite{Farrell} it is an hypothesis that the elements on the algebra of functions are Baire measurable, which is not clear to us if and how this hypothesis can be fulfilled. Being the main ingredient for the proof the inner regularity  on all measurable sets, and  every finite  Radon measure  satisfies this, we could still use the result on our setting. We prove this simple change of \cite{Farrell} at Section \ref{Dense algebras of bounded integrable functions on finite measures}.

\begin{thm}\label{Gauszao}  The Gaussian kernel
$$
(x,y) \in \mathcal{H}\times \mathcal{H} \to G_{\sigma}(x,y) = e^{-\sigma \|x-y\|^{2}} \in \mathbb{R}
$$
is  ISPD.
\end{thm}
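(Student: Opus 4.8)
I would argue by contradiction. Suppose $\lambda\in\mathcal{M}(\mathcal{H})$ is nonzero and $\int_{\mathcal{H}}\int_{\mathcal{H}}G_{\sigma}(x,y)\,d\lambda(x)\,d\overline{\lambda}(y)=0$; the goal is to force $\lambda=0$. (Definition \ref{intstricdefn} does apply to $G_{\sigma}$: it is bounded by $1$, and by Proposition \ref{rkhscontained}(i) we have $\mathcal{H}_{G_{\sigma}}\subset C(\mathcal{H})$, since $x\mapsto G_{\sigma}(x,x)$ is constant and $x\mapsto G_{\sigma}(x,y)$ is continuous.) The first step is to absorb a Gaussian weight into the measure: set $d\mu(x):=e^{-\sigma\|x\|^{2}}\,d\lambda(x)$. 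Since $e^{-\sigma\|x\|^{2}}$ never vanishes, $\mu$ is again a nonzero finite complex Radon measure, and $\int_{\mathcal{H}}e^{a\|x\|}\,d|\mu|(x)=\int_{\mathcal{H}}e^{a\|x\|-\sigma\|x\|^{2}}\,d|\lambda|(x)<\infty$ for every $a\ge 0$ because $a\|x\|-\sigma\|x\|^{2}$ is bounded above; hence every monomial $\langle x,z\rangle^{n}$ and every bounded character $x\mapsto e^{i\langle x,z\rangle}$ lies in $L^{1}(|\mu|)$. Now write $G_{\sigma}(x,y)=e^{-\sigma\|x\|^{2}}e^{-\sigma\|y\|^{2}}\sum_{n\ge 0}\frac{(2\sigma)^{n}}{n!}\langle x,y\rangle^{n}$ and note that the corresponding series of absolute values is bounded by $e^{-\sigma\|x\|^{2}}e^{-\sigma\|y\|^{2}}e^{2\sigma|\langle x,y\rangle|}\le 1\in L^{1}(|\lambda|\times|\lambda|)$, so one may interchange summation and integration to get
$$0=\sum_{n\ge 0}\frac{(2\sigma)^{n}}{n!}\int_{\mathcal{H}}\int_{\mathcal{H}}\langle x,y\rangle^{n}\,d\mu(x)\,d\overline{\mu}(y).$$
Since $\langle x,y\rangle^{n}=\langle x^{\otimes n},y^{\otimes n}\rangle$, the $n$-th double integral equals $\|\int_{\mathcal{H}}x^{\otimes n}\,d\mu(x)\|^{2}$ in (the complexification of) the Hilbert tensor power $\mathcal{H}^{\otimes n}$, the Bochner integral being well defined as $\int\|x\|^{n}\,d|\mu|<\infty$; in particular it is nonnegative. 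A convergent series of nonnegative reals that sums to zero vanishes termwise, so $\int_{\mathcal{H}}x^{\otimes n}\,d\mu(x)=0$ for every $n$, whence $\int_{\mathcal{H}}\langle x,z\rangle^{n}\,d\mu(x)=0$ for all $n\ge 0$ and all $z\in\mathcal{H}$. Inserting this into the exponential series and applying dominated convergence (the partial sums of $e^{i\langle x,z\rangle}$ are bounded by $e^{\|x\|\,\|z\|}\in L^{1}(|\mu|)$) yields the key identity $\int_{\mathcal{H}}e^{i\langle x,z\rangle}\,d\mu(x)=0$ for every $z\in\mathcal{H}$.

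The point of replacing the monomials by the bounded characters is that $\mathcal{A}:=\operatorname{span}_{\mathbb{C}}\{x\mapsto e^{i\langle x,z\rangle}\,:\,z\in\mathcal{H}\}$ is a uniformly bounded, self-adjoint, unital subalgebra of $C(\mathcal{H})$ that separates the points of $\mathcal{H}$ (given $x_{1}\ne x_{2}$, take $z=t(x_{1}-x_{2})$ with $t\|x_{1}-x_{2}\|^{2}\notin 2\pi\mathbb{Z}$), and on which the functional $g\mapsto\int_{\mathcal{H}}g\,d\mu$ vanishes identically. At this point I would invoke the $L^{1}$ form of the Stone--Weierstrass theorem to be established in Section \ref{Dense algebras of bounded integrable functions on finite measures} --- the variant of \cite{Farrell} that requires only inner regularity on all measurable sets, which every finite Radon measure satisfies --- to conclude that $\mathcal{A}$ is dense in $L^{1}(|\mu|)$. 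Writing $d\mu=h\,d|\mu|$ with $|h|=1$ $|\mu|$-almost everywhere and picking $g_{n}\in\mathcal{A}$ with $g_{n}\to\overline{h}$ in $L^{1}(|\mu|)$, we get $0=\int_{\mathcal{H}}g_{n}\,d\mu=\int_{\mathcal{H}}g_{n}h\,d|\mu|\to\int_{\mathcal{H}}|h|^{2}\,d|\mu|=|\mu|(\mathcal{H})$, so $\mu=0$ and therefore $\lambda=0$, contradicting $\lambda\ne 0$. Hence $G_{\sigma}$ is ISPD.

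The main obstacle is not the estimates above but the last ingredient: having a Stone--Weierstrass-type density theorem available in $L^{1}$ of a finite measure living on a space as unwieldy as an infinite-dimensional Hilbert space, which is not locally compact, so the $C_{0}$ version of Stone--Weierstrass is of no use here. Supplying such a theorem is exactly the purpose of Section \ref{Dense algebras of bounded integrable functions on finite measures}; the delicate point there, as already remarked, is to dispense with the Baire-measurability hypothesis of \cite{Farrell} using the inner regularity that every finite Radon measure possesses. The two preparatory ideas --- reweighting by $e^{-\sigma\|\cdot\|^{2}}$ so that the moment and exponential series converge against the measure, and then trading the unbounded monomials for the bounded characters $e^{i\langle\cdot,z\rangle}$ so that a uniformly bounded algebra can be fed to that theorem --- are what make the argument go through, after which only routine estimates remain.
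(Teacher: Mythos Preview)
Your argument is sound in outline and takes a cleaner route than the paper's, but there is one mismatch you should address. Theorem~\ref{Farrelproof} is stated for \emph{real-valued} algebras, and its hypothesis~(iii) is not mere point-separation but a sign-separation condition: for each pair of disjoint basic open sets one needs a continuous element of the algebra that is strictly positive on one and strictly negative on the other. Since your $\mathcal{A}$ is self-adjoint it suffices to verify this for $\mathcal{A}_{\mathbb{R}}=\operatorname{span}_{\mathbb{R}}\{\cos\langle\cdot,z\rangle,\sin\langle\cdot,z\rangle:z\in\mathcal{H}\}$; condition~(ii) holds with $z=0$, and for~(iii), given disjoint balls $B(a_1,r_1)$, $B(a_2,r_2)$ with $d=\|a_1-a_2\|>r_1+r_2$, set $e=(a_1-a_2)/d$ and take $h(x)=\sin\bigl(\omega(\langle x,e\rangle-c)\bigr)\in\mathcal{A}_{\mathbb{R}}$ for the midpoint $c$ between the two projected intervals and any $0<\omega<\pi/(d+r_1+r_2)$. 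With that patch your invocation of the $L^1$ density theorem is legitimate and the rest of your argument goes through.

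The paper proceeds differently: it first reduces via Lemma~\ref{toptrick} to a separable subspace in order to use coordinate (multinomial) expansions of $\langle x,y\rangle^{n}$, then proves two intermediate ``Affirmations'' --- that $\int x^{\alpha}e^{-r\|x\|^{2}}\,d\lambda=0$ extends from one value of $r$ to all $r>0$ by an analytic-continuation device, and that all finite products $\prod_{\mu}e^{\langle x,z_{\mu}\rangle-\|x\|^{2}/2}$ integrate to zero --- before applying Theorem~\ref{Farrelproof} to the algebra generated by $\{e^{\langle\cdot,z\rangle-\|\cdot\|^{2}/2}:z\in X\}$. Your tensor-power identity $\langle x,y\rangle^{n}=\langle x^{\otimes n},y^{\otimes n}\rangle$ is coordinate-free and avoids the separability reduction entirely, and trading weighted real exponentials for the bounded characters $e^{i\langle\cdot,z\rangle}$ (already closed under multiplication) lets you bypass both Affirmations. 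The price is that for the paper's Gaussian-weighted algebra hypothesis~(iii) of Theorem~\ref{Farrelproof} is essentially automatic --- a difference of two shifted Gaussians does the job --- whereas for your trigonometric algebra it needs the small extra check sketched above.
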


If $\mathcal{H}$ is infinite dimensional then it is not a locally compact space, so the concepts of $C_{0}$-universality  are not well defined for  $G_{\sigma}$. However, we can analyse the kernel when restricted to a locally compact space $X \subset \mathcal{H}$ (induced topology).  The following  structure result characterizes when the inclusion $\mathcal{H}_{G_{\sigma}} \subset C_{0}(X)$ is satisfied.

\begin{lem}\label{Gausskerinfty}  Let  $X \subset \mathcal{H}$ be   locally compact and the kernel $G_{\sigma}$ restricted to $X$. The following conditions are equivalent
\begin{enumerate}
    \item[(i)] There exists $z_{0} \in X $ for which the function $G_{\sigma,z_{0}}(x)= e^{-\sigma\|x-z_{0}\|^{2}} \in C_{0}(X)$.
    \item[(ii)] The function $G_{\sigma,z}(x)= e^{-\sigma\|x-z\|^{2}}$ is an element of $C_{0}(X)$ for every $z \in X$.
    \item[(iii)] The inclusion $\mathcal{H}_{G_{\sigma}}\subset C_{0}(X)$ holds.
    \item[(iv)] Every bounded and closed set on $X$ is a compact set on $X$.
\end{enumerate} 
\end{lem}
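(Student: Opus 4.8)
The plan is to prove the chain of implications $(iv)\Rightarrow(ii)\Rightarrow(i)$ trivially, $(i)\Leftrightarrow(ii)$ via a direct estimate, $(ii)\Rightarrow(iii)$ using Proposition \ref{rkhscontained}(ii), $(iii)\Rightarrow(i)$ immediately, and the crucial $(i)\Rightarrow(iv)$ by a contradiction argument. The implications $(ii)\Rightarrow(i)$ and $(iii)\Rightarrow(i)$ are immediate by specialization. For $(i)\Rightarrow(ii)$: suppose $G_{\sigma,z_0}\in C_0(X)$ for some $z_0$; for arbitrary $z\in X$ write $\|x-z\|^2 = \|x-z_0\|^2 + 2\langle x-z_0, z_0-z\rangle + \|z_0-z\|^2$, so that $\|x-z\|^2 \geq \|x-z_0\|^2 - 2\|x-z_0\|\,\|z_0-z\| + \|z_0-z\|^2$; since $\|x-z_0\|\to\infty$ forces $\|x-z\|\to\infty$ (and on $X\setminus \mathcal{C}$ with $\mathcal{C}$ the compact making $G_{\sigma,z_0}<\epsilon$, the quantity $\|x-z_0\|$ is large), one concludes $G_{\sigma,z}\in C_0(X)$; note here $G_{\sigma,z_0}(x)<\epsilon$ exactly means $\|x-z_0\|^2 > \sigma^{-1}\log(1/\epsilon)$, so the complements of compacts are precisely the sets where $\|x-z_0\|$ is bounded below by a constant tending to infinity.

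For $(ii)\Rightarrow(iii)$: apply Proposition \ref{rkhscontained}(ii) directly — $K(x,x)=G_\sigma(x,x)=1$ is bounded, and $x\mapsto G_\sigma(x,z)=G_{\sigma,z}(x)\in C_0(X)$ for every $z$ by hypothesis, so $\mathcal{H}_{G_\sigma}\subset C_0(X)$. For $(iv)\Rightarrow(ii)$: if every bounded closed subset of $X$ is compact, fix $z\in X$; for $\epsilon>0$ the set $\mathcal{C}_\epsilon = \{x\in X: \|x-z\|^2 \leq \sigma^{-1}\log(1/\epsilon)\}$ is bounded and closed in $X$, hence compact, and outside it $G_{\sigma,z}(x)<\epsilon$; continuity of $G_{\sigma,z}$ is clear. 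So $G_{\sigma,z}\in C_0(X)$.

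The main obstacle is $(i)\Rightarrow(iv)$, which I would prove by contrapositive. Suppose there is a bounded closed set $F\subset X$ that is not compact. Since $F$ is bounded, say $F\subset \{x: \|x-z_0\|\leq R\}$, and $F$ is closed in $X$ but not compact, there is a sequence $(x_n)\subset F$ with no subsequence converging in $X$. Now observe that $G_{\sigma,z_0}(x_n)=e^{-\sigma\|x_n-z_0\|^2}\geq e^{-\sigma R^2}>0$ for all $n$, so the values $G_{\sigma,z_0}(x_n)$ are bounded below by a positive constant; if $G_{\sigma,z_0}\in C_0(X)$ this would force $(x_n)$ to lie in the compact set $\mathcal{C}=\{x: G_{\sigma,z_0}(x)\geq e^{-\sigma R^2}\}$ (which is closed by continuity and is a subset of the closed ball, hence... — here is the subtlety: we must know $\mathcal C$ is compact, which is exactly what $C_0$-membership gives: the set where a $C_0$ function is $\geq\eta>0$ is always contained in a compact set, hence compact being closed). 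Then $(x_n)\subset\mathcal{C}$ compact would yield a convergent subsequence in $X$, contradicting non-compactness of $F$. Thus $(i)\Rightarrow(iv)$. The only care needed is the standard fact that for $f\in C_0(X)$ and $\eta>0$, $\{x: |f(x)|\geq\eta\}$ is compact — this follows because it is closed (continuity) and contained in the compact set $\mathcal{C}_{\eta}$ from the definition of $C_0$, together with the fact that a closed subset of a compact set is compact.
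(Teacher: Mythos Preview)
Your proposal is correct and follows essentially the same route as the paper. The only cosmetic differences are: for $(i)\Rightarrow(ii)$ the paper uses the parallelogram identity to get the clean bound $e^{-\sigma\|x-z\|^{2}}\le e^{\sigma\|z-z_{0}\|^{2}}e^{-\sigma\|x-z_{0}\|^{2}/2}$, whereas you arrive at the same conclusion via the reverse triangle inequality; and for the direction towards $(iv)$ the paper argues directly that the closed ball $\{\|x-z\|\le r\}$ coincides with the super-level set $\{G_{\sigma,z}\ge e^{-\sigma r^{2}}\}$, hence is compact, while you wrap the same observation in a sequential-compactness contradiction (legitimate since $X$ is metric).
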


Next theorem  is a consequence of Theorem  \ref{Gauszao} and Theorem \ref{Gausskerinfty}, however, we present a different proof for it, based on the $C_{0}$ version of the Stone-Weierstrass Theorem.

\begin{thm}\label{gausssunboundc0} Let  $X \subset \mathcal{H}$ be   locally compact. The Gaussian kernel
$$
(x,y) \in X\times X \to G_{\sigma}(x,y):=e^{-\sigma\|x-y\|^{2}} \in \mathbb{R}
$$
is $C_{0}(X)$-universal if and only if $\mathcal{H}_{G_{\sigma}}\subset C_{0}(X)$.
\end{thm}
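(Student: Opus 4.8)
The forward direction is immediate: $C_0(X)$-universality of $G_\sigma$ presupposes $\mathcal{H}_{G_\sigma}\subset C_0(X)$ by Definition~\ref{univdef}, so the content lies entirely in the converse, and the plan is as follows. Assume $\mathcal{H}_{G_\sigma}\subset C_0(X)$; by Lemma~\ref{Gausskerinfty} this is equivalent to item (iv) there, that every bounded closed subset of $X$ is compact. From (iv) I would first extract the two facts needed below: for \emph{every} $t>0$ and \emph{every} $z\in\mathcal{H}$ --- not only $z\in X$ --- the function $x\mapsto e^{-t\|x-z\|^{2}}$ restricted to $X$ lies in $C_{0}(X)$ (a net in $X$ leaving every compact subset has $\|x\|\to\infty$, hence $\|x-z\|\to\infty$); and, since the restriction to $X$ of the RKHS of a kernel on $\mathcal{H}\times\mathcal{H}$ is the RKHS of the restricted kernel, $G_{\sigma,z}|_{X}\in\mathcal{H}_{G_\sigma}$ for every $z\in\mathcal{H}$. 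As $\mathrm{span}\{G_{\sigma,z}:z\in X\}$ is dense in $\mathcal{H}_{G_\sigma}$ in RKHS norm and $|f(x)|=|\langle f,G_{\sigma,x}\rangle|\le\|f\|_{\mathcal{H}_{G_\sigma}}$ (because $G_\sigma(x,x)=1$), RKHS convergence implies uniform convergence on $X$; hence $\mathcal{H}_{G_\sigma}$ is dense in $C_0(X)$ if and only if $\mathrm{span}\{G_{\sigma,z}|_X:z\in\mathcal{H}\}$ is.

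The central step is to bring in the Stone--Weierstrass theorem via the factorization $G_\sigma(x,y)=e^{-\sigma\|x\|^{2}}e^{2\sigma\langle x,y\rangle}e^{-\sigma\|y\|^{2}}$. Writing $\phi:=e^{-\sigma\|\cdot\|^{2}}$, one has $\mathrm{span}\{G_{\sigma,z}|_X:z\in\mathcal{H}\}=\phi\cdot\mathcal{E}$, where $\mathcal{E}:=\mathrm{span}\{e^{2\sigma\langle\cdot,z\rangle}|_X:z\in\mathcal{H}\}$, the scalars $e^{-\sigma\|z\|^{2}}$ being absorbed into the span. Now $\mathcal{E}$ is a self-adjoint subalgebra of $C(X)$ --- closed under products since $e^{2\sigma\langle\cdot,z_1\rangle}e^{2\sigma\langle\cdot,z_2\rangle}=e^{2\sigma\langle\cdot,z_1+z_2\rangle}$ --- which separates the points of $X$ (for $x_1\ne x_2$ take $z=x_1-x_2$) and vanishes nowhere ($z=0$). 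Thus the goal has become a \emph{weighted} density statement: the algebra $\mathcal{E}$, weighted by the Gaussian $\phi$, is dense in $C_0(X)$; equivalently, $\mathcal{E}$ is dense in $\{h\in C(X):\phi h\in C_0(X)\}$ with the norm $\|\phi h\|_{\infty}$, to which a $C_0$ form of the Stone--Weierstrass theorem should apply.

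I expect this last point to be the main obstacle. The mechanism I would use is an exhaustion: by (iv), $X=\bigcup_{n}\mathcal{K}_{n}$ with $\mathcal{K}_{n}:=\{x\in X:\|x\|\le n\}$ compact; on each $\mathcal{K}_n$ the ordinary Stone--Weierstrass theorem already makes $\mathcal{E}$ dense in $C(\mathcal{K}_n)$, and one may moreover take the approximating elements with centers $z$ in a fixed ball (their uniform closure on $\mathcal{K}_n$ contains every polynomial), so that such an approximant grows at most like a single Gaussian while $\phi$ decays like a Gaussian. The difficulty is to patch the local approximations into one global element of $C_0(X)$ while controlling \emph{simultaneously} the approximation quality on $\mathcal{K}_n$ and the smallness of the tail on $X\setminus\mathcal{K}_n$ --- precisely the work packaged by the $C_0$ version of Stone--Weierstrass. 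A slightly different route that avoids the weighted space: since $\mathcal{H}_{G_\sigma}=\phi\cdot\mathcal{H}_{e^{2\sigma\langle\cdot,\cdot\rangle}}$ isometrically and $\mathcal{H}_{e^{2\sigma\langle\cdot,\cdot\rangle}}$ contains every polynomial on $X$, it suffices that $\phi\cdot(\text{polynomials})$ be dense in $C_0(X)$, which dually says that a finite Radon measure $\nu$ on $X$ with $\int e^{\sigma\|x\|^{2}}\,d|\nu|<\infty$ and all polynomial moments zero must vanish --- the exponential-integrability determinacy of the multidimensional moment problem, reducible to finite dimensions by projecting onto finite-dimensional subspaces. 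Finally, as the remark preceding the theorem indicates, the statement can also simply be obtained by combining Theorem~\ref{Gauszao} with Lemma~\ref{Gausskerinfty} and the measure-theoretic description of $C_0$-universality.
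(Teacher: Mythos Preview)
Your final sentence is correct and is exactly what the paper itself notes just before the statement: the result follows at once from Theorem~\ref{Gauszao} together with Lemma~\ref{Gausskerinfty} and the measure-theoretic characterization of $C_{0}$-universality. So as a proof, that one line suffices.

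For the \emph{direct} argument the paper actually carries out, your plan diverges from it, and the divergence is precisely where you locate the obstacle. You try to work primally, showing $\phi\cdot\mathcal{E}$ is dense in $C_{0}(X)$ with $\mathcal{E}=\mathrm{span}\{e^{2\sigma\langle\cdot,z\rangle}\}$; the difficulty is that $\mathcal{E}\not\subset C_{0}(X)$, so the $C_{0}$ Stone--Weierstrass theorem does not apply to it and you are forced into a weighted-density statement that you do not resolve. The paper sidesteps this entirely by working dually from the start (show ISPD, i.e.\ any $\lambda\in\mathcal{M}(X)$ with $\iint G_{\sigma}\,d\lambda\,d\overline{\lambda}=0$ vanishes) and by choosing a different algebra. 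From the moment expansion one first gets $\int_{X}x^{\alpha}e^{-\sigma\|x\|^{2}}\,d\lambda=0$ for all multi-indices $\alpha$; the key additional step---your proposal has no analogue of it---is a bootstrapping argument (Affirmation~1 in the proof of Theorem~\ref{Gauszao}) showing that in fact $\int_{X}x^{\alpha}e^{-r\|x\|^{2}}\,d\lambda=0$ for \emph{every} $r>0$. The resulting family
\[
\mathcal{A}=\mathrm{span}\bigl\{x\mapsto x^{\alpha}e^{-r\langle x,x\rangle}:\ r>0,\ |\alpha|<\infty\bigr\}
\]
is a genuine subalgebra of $C_{0}(X)$ (closure under products comes from adding the exponents $r$, which is why one needs all $r>0$), is nowhere vanishing, and separates points; the ordinary $C_{0}$ Stone--Weierstrass theorem then gives $\lambda=0$. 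Thus the paper's device is to trade your unbounded exponential algebra for a $C_{0}$ algebra by letting the Gaussian variance parameter vary---this is what dissolves the weighted-density problem.

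Your alternative route via moment determinacy is essentially the dual of the paper's argument, but as written it is only a sketch: you would still need the separability reduction (the paper handles this via Lemma~\ref{toptrick}) and an actual proof that exponential integrability forces determinacy on the relevant support, which amounts to redoing the bootstrapping step above.
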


The results in this section could be proved on a more general setting. By \cite{schoenbradial} a continuous function $g:[0, \infty) \to \mathbb{R}$  is such that the kernel
$$
(x,y ) \in \mathbb{R}^{m} \times \mathbb{R}^{m} \to g(\|x-y\|) \in \mathbb{R}
$$
is positive definite for every $m \in \mathbb{N}$, if and only if $f(t):= g(\sqrt{t}) \in C^{\infty}((0, \infty))$ with $(-1)^{n}f^{(n)}(t) \geq 0$ for every $n \in \mathbb{N}$ (a function $f$ with these properties is called  \textbf{completely monotone}), or equivalently that there exists a nonnegative measure $\lambda \in \mathcal{M}([0, \infty))$ for which
$$
g(t) = \int_{[0, \infty)}e^{-rt^{2}}d\lambda(r).
$$
Replacing the Gaussian kernel by a function of this type on Theorems \ref{Gauszinho}, \ref{Gauszao} and \ref{gausssunboundc0} is possible, whenever $g$ is not a constant function, or equivalently $\lambda((0, \infty))>0$.  Lemma \ref{Gausskerinfty}  is also possible  whenever $g$ is not a constant function and relation $(iv)$ is replaced by
\begin{enumerate}
\item[$(iv)^{\prime}$] Every bounded and closed set on X is a compact set and $\lim_{t \to \infty}g(t)=0$.
\end{enumerate}

The argument that this generalization is indeed possible is a direct consequence of Theorem $3.7$ in \cite{jean2020}, and we do not present it.


\section{Universality of Schoenberg-Gaussian kernels}\label{Universality of Schoenberg-Gaussian kernels}
In \cite{schoenmetric}  Schoenberg proved that a kernel $\gamma: X \times X \to \mathbb{R}$ is such that the kernel
$$
(x,y) \in X \times X \to e^{-r\gamma(x,y)} \in \mathbb{R}
$$
is positive definite for every $r >0$ if and only if the kernel $\gamma$ is \textbf{conditionally negative definite} (CND), that is, $\gamma$ is symmetric  ($\gamma(x,y)= \gamma(y,x)$) and for every finite quantity of distinct points $x_{1}, \ldots, x_{n}$ and scalars $c_{1}, \ldots, c_{n} \in \mathbb{R}$, restricted to the hyperplane $\sum_{\mu=1}^{n} c_{\mu}=0$, it satisfies
$$
\sum_{\mu, \nu=1}^{n}c_{\mu}c_{\nu}\gamma(x_{\mu}, x_{\nu}) \leq 0.
$$
Since \cite{Aronszajn1950} it is known the  strong connection between positive definite kernels and inner products on Hilbert spaces as well as conditionally negative definite kernels and norms on Hilbert spaces, since   $\gamma: X \times X \to \mathbb{R}$ can be written as (Proposition $3.2$ in \cite{berg0})
\begin{equation}\label{condequa}
\gamma(x,y)= \|h(x)- h(y)\|_{\mathcal{H}}^{2} + f(x) + f(y)
\end{equation}
where $\mathcal{H}$ is a real Hilbert space and $h: X \to \mathcal{H}$, and $f : X \to \mathbb{R}$. Note that $f(x)= \gamma(x,x)/2$. This description allows us to understand the kernel $e^{-\gamma(x,y)}$ as a weighted version ($f$ may be nonzero) of a restriction of the  Gaussian kernel defined on  an (usually) infinite dimensional Hilbert space.

An important relation to our purposes is if the function $h$ is injective (equivalently, if  $2\gamma(x,y) > \gamma(x,x) + \gamma(y,y)$ for every $x,y \in X$). On this case there  is a natural metric structure on $X$ provided by the norm on $\mathcal{H}$, being the distance
$$
D_{\gamma}(x,y) := \sqrt{ \gamma(x,y)- \frac{\gamma(x,x)}{2} + \frac{\gamma(y,y)}{2} }.
$$
 Naturally,  a conditionally negative definite kernel with this property is called \textbf{metrizable}. The set $X$ with the metric topology $D_{\gamma}$ is denoted as $X_{\gamma}$.

\begin{thm}\label{improvSchoen} Let $X$ be a  Hausdorff space and $\gamma: X \times X \to \mathbb{R}$ be a  continuous conditionally negative definite kernel. The kernel
$$	
(x,y) \in X \times X \to G_{\gamma}(x,y):= e^{-\gamma(x,y)} \in \mathbb{R}	
$$	
is SPD (universal) if and only if the kernel $\gamma$ is metrizable.
\end{thm}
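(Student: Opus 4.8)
The plan is to reduce everything to the Gaussian kernel on $\mathcal H$ via the Aronszajn--Schoenberg representation \ref{condequa}: since $\gamma$ is conditionally negative definite, $\gamma(x,y)=\|h(x)-h(y)\|_{\mathcal H}^{2}+f(x)+f(y)$ with $f(x)=\gamma(x,x)/2$ and $h\colon X\to\mathcal H$. Continuity of $\gamma$ makes $f$ continuous, hence $\|h(x)-h(y)\|^{2}=\gamma(x,y)-f(x)-f(y)$ is continuous and, fixing one variable, $h$ is continuous. Writing $g:=e^{-f}$ (continuous and strictly positive) this yields the factorization
$$
G_{\gamma}(x,y)=g(x)\,g(y)\,G_{1}\!\bigl(h(x),h(y)\bigr),
$$
where $G_{1}$ is the unit Gaussian kernel on $\mathcal H$, to which Theorem~\ref{Gauszinho} applies. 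Moreover $x\mapsto G_{\gamma}(x,x)$ is continuous and each $G_{\gamma}(\cdot,y)$ is continuous, so by Proposition~\ref{rkhscontained}(i) we have $\mathcal H_{G_{\gamma}}\subset C(X)$; thus universality of $G_{\gamma}$ is equivalent, through Equation~\ref{univform}, to the vanishing of the only $\lambda\in\mathcal M_{c}(X)$ it annihilates, and in particular universality implies SPD here.

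\emph{If $\gamma$ is not metrizable, then $G_{\gamma}$ is neither SPD nor universal.} Non-metrizability means $h$ is not injective, so there are $x_{1}\neq x_{2}$ with $h(x_{1})=h(x_{2})$, i.e.\ $2\gamma(x_{1},x_{2})=\gamma(x_{1},x_{1})+\gamma(x_{2},x_{2})$. With $a=e^{-\gamma(x_{1},x_{1})}$, $b=e^{-\gamma(x_{2},x_{2})}$, the $2\times2$ Gram matrix of $G_{\gamma}$ at $\{x_{1},x_{2}\}$ equals $\left(\begin{smallmatrix}a&\sqrt{ab}\\\sqrt{ab}&b\end{smallmatrix}\right)$, which is singular with $(\sqrt b,-\sqrt a)$ in its kernel. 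Hence the double sum vanishes at these distinct points with the nonzero scalars $(\sqrt b,-\sqrt a)$, so $G_{\gamma}$ is not SPD, and the discrete measure $\sqrt b\,\delta_{x_{1}}-\sqrt a\,\delta_{x_{2}}\in\mathcal M_{c}(X)$ is a nonzero witness in Equation~\ref{univform}, so $G_{\gamma}$ is not universal either.

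\emph{If $\gamma$ is metrizable, then $G_{\gamma}$ is SPD and universal.} Now $h$ is injective. SPD is immediate: for distinct $x_{1},\dots,x_{n}$ and scalars $c_{\mu}$, setting $d_{\mu}=c_{\mu}g(x_{\mu})$ the factorization gives $\sum c_{\mu}\overline{c_{\nu}}G_{\gamma}(x_{\mu},x_{\nu})=\sum d_{\mu}\overline{d_{\nu}}G_{1}(h(x_{\mu}),h(x_{\nu}))$, the points $h(x_{\mu})$ are distinct, so by the SPD part of Theorem~\ref{Gauszinho} all $d_{\mu}$, hence all $c_{\mu}$, vanish. For universality, let $\lambda\in\mathcal M_{c}(X)$ satisfy $\int\!\int G_{\gamma}\,d\lambda\,d\overline\lambda=0$ and put $d\mu=g\,d\lambda$; since $g$ is continuous and strictly positive, $\mu\in\mathcal M_{c}(X)$ and $\mu$, $\lambda$ are mutually absolutely continuous, so it suffices to show $\mu=0$. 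The factorization turns the hypothesis into $\int\!\int G_{1}(h(x),h(y))\,d\mu(x)\,d\overline\mu(y)=0$. Let $\nu:=h_{*}\mu$: it is a finite complex measure supported on the compact set $h(\operatorname{supp}\mu)\subset\mathcal H$, hence a compactly supported Radon measure, and the change of variables gives $\int_{\mathcal H}\!\int_{\mathcal H}G_{1}(u,v)\,d\nu(u)\,d\overline\nu(v)=0$, so by the universality of $G_{1}$ on $\mathcal H$ (Theorem~\ref{Gauszinho}, via Equation~\ref{univform}) we get $\nu=0$.

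The one genuinely delicate point, and the expected main obstacle, is to recover $\mu=0$ from $h_{*}\mu=0$ — here injectivity is used a second time, together with the fact that a compactly supported pushforward of a Radon measure is again Radon (so that Theorem~\ref{Gauszinho} can legitimately be invoked on the non--locally compact space $\mathcal H$). Let $S=\operatorname{supp}\mu$, a compact subset of $X$; then $h|_{S}\colon S\to h(S)$ is a continuous bijection from a compact space onto a Hausdorff space, hence a homeomorphism, hence a Borel isomorphism. Consequently every Borel $A\subset S$ is of the form $A=h^{-1}(B)\cap S$ for some Borel $B\subset h(S)$, and since $\mu$ is concentrated on $S$, $\mu(A)=\mu(h^{-1}(B))=(h_{*}\mu)(B)=\nu(B)=0$. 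Thus $\mu$ vanishes on all Borel subsets of $S$ and is concentrated on $S$, so $\mu=0$, whence $\lambda=0$ and $G_{\gamma}$ is universal. Together with the previous paragraph this gives both equivalences.
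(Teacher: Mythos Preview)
Your proof is correct and follows essentially the same route as the paper: the factorization $G_{\gamma}(x,y)=e^{-f(x)}G_{1}(h(x),h(y))e^{-f(y)}$, the $2\times 2$ determinant obstruction when $h$ is not injective, and the reduction to Theorem~\ref{Gauszinho} via the injective continuous map $h$. The only cosmetic difference is that the paper packages your rescaling and pushforward arguments into Lemma~\ref{simpliuniv} and Lemma~\ref{prop17caponettogen}, whereas you carry them out inline (and add an explicit justification that $h_{*}\mu$ is Radon and that $h|_{S}$ is a Borel isomorphism, which is exactly the content of those lemmas in this compact-support setting).
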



\begin{thm}\label{integrallygaussianonmetric} Let $X$ be a  Hausdorff space and $\gamma: X \times X \to \mathbb{R}$ be a   continuous conditionally negative definite   kernel. Then the kernel
$$
(x,y) \in X\times X \to e^{-\gamma(x,y)} \in \mathbb{R}
$$
is  ISPD if and only if $\gamma$ is metrizable and the function $x \in X \to \gamma(x,x) \in \mathbb{R}$ is bounded from below.
\end{thm}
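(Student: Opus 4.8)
The plan is to prove the two implications separately, the forward one being essentially immediate and the converse the one that requires work. For necessity, suppose $e^{-\gamma}$ is ISPD. By Definition \ref{intstricdefn} an ISPD kernel is in particular bounded, so $x\mapsto e^{-\gamma(x,x)}$ is bounded above, which forces $x\mapsto\gamma(x,x)$ to be bounded from below. Moreover, testing the relation in Equation \ref{c0univform} against the discrete measures $\lambda=\sum_{\mu}c_{\mu}\delta_{x_{\mu}}\in\mathcal{M}_{\delta}(X)\subset\mathcal{M}(X)$ shows that an ISPD kernel is SPD, so by Theorem \ref{improvSchoen} the kernel $\gamma$ is metrizable. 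This settles one direction with no further effort.

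For sufficiency, assume $\gamma$ is metrizable and $x\mapsto\gamma(x,x)$ is bounded from below. Using the representation at Equation \ref{condequa}, write $\gamma(x,y)=\|h(x)-h(y)\|_{\mathcal{H}}^{2}+f(x)+f(y)$ with $f(x)=\gamma(x,x)/2$ and $h:X\to\mathcal{H}$ injective (metrizability). Since $\gamma$ and $f$ are continuous, $\|h(x)-h(y)\|_{\mathcal{H}}^{2}=\gamma(x,y)-f(x)-f(y)$ is continuous, so $h$ is continuous from $X$ to $(\mathcal{H},\|\cdot\|_{\mathcal{H}})$. Put $w(x)=e^{-f(x)}$, a strictly positive continuous function which is bounded because $f$ is bounded from below; then $e^{-\gamma(x,y)}=w(x)w(y)\,e^{-\|h(x)-h(y)\|_{\mathcal{H}}^{2}}$ is bounded, and by Proposition \ref{rkhscontained}(i) its RKHS consists of continuous functions, so testing the ISPD condition is legitimate. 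Let $\lambda\in\mathcal{M}(X)$ satisfy the relation in Equation \ref{c0univform} for this kernel, let $w\lambda$ denote the finite complex measure $A\mapsto\int_{A}w\,d\lambda$ on $X$, and consider its pushforward $\mu:=h_{\ast}(w\lambda)$. Using inner regularity of the finite Radon measure $|\lambda|$, pick compact sets $C_{n}\subset C_{n+1}\subset X$ with $|\lambda|(X\setminus\bigcup_{n}C_{n})=0$; then $\mu$ is a finite Borel measure carried by the $\sigma$-compact set $\bigcup_{n}h(C_{n})$, hence tight, so $\mu\in\mathcal{M}(\mathcal{H})$. By the change of variables formula applied to the bounded continuous kernel $(u,v)\mapsto e^{-\|u-v\|_{\mathcal{H}}^{2}}$ under $h\times h$, together with the above factorization,
$$
\int_{\mathcal{H}}\int_{\mathcal{H}}e^{-\|u-v\|_{\mathcal{H}}^{2}}\,d\mu(u)\,d\overline{\mu}(v)
=\int_{X}\int_{X}e^{-\gamma(x,y)}\,d\lambda(x)\,d\overline{\lambda}(y)=0,
$$
so Theorem \ref{Gauszao} (the Gaussian kernel $G_{1}$ is ISPD on $\mathcal{H}$) gives $\mu=0$.

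It then remains to recover $\lambda$ from $\mu=0$. On each compact $C_{n}$ the map $h|_{C_{n}}:C_{n}\to h(C_{n})$ is a continuous bijection from a compact space onto a Hausdorff space, hence a homeomorphism, hence a Borel isomorphism; since $h(C_{n})$ is compact, it is Borel in $\mathcal{H}$, and by the global injectivity of $h$ one has $h^{-1}(h(A))=A$. Thus for every Borel $A\subset C_{n}$ the set $h(A)$ is Borel in $\mathcal{H}$ and $(w\lambda)(A)=\int_{h^{-1}(h(A))}w\,d\lambda=\mu(h(A))=0$. Hence $w\lambda$ vanishes on each $C_{n}$, and since $|\lambda|(X\setminus\bigcup_{n}C_{n})=0$ we get $w\lambda=0$; as $w>0$ this means $\lambda=0$, so $e^{-\gamma}$ is ISPD.

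The step I expect to be the main obstacle is the measure-theoretic bookkeeping in the last two paragraphs: ensuring that $\mu=h_{\ast}(w\lambda)$ is a bona fide finite Radon measure on the possibly non-separable infinite-dimensional space $\mathcal{H}$ (so that Theorem \ref{Gauszao} applies), and conversely deducing the vanishing of $\lambda$ from that of $\mu$ even though $h$ need not be a homeomorphism onto its image globally. Both difficulties are handled by exploiting inner regularity to localize to compact subsets of $X$, on which $h$ is automatically a homeomorphism onto its compact image; the remaining ingredients are Fubini together with the change of variables formula and the already established Theorems \ref{improvSchoen} and \ref{Gauszao}.
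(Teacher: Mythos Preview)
Your proof is correct and follows essentially the same route as the paper: factor $e^{-\gamma(x,y)}=e^{-f(x)}e^{-\|h(x)-h(y)\|^{2}}e^{-f(y)}$, push the weighted measure through $h$, invoke Theorem \ref{Gauszao} on $\mathcal{H}$, and pull the vanishing back to $X$. The paper packages the two steps you carry out by hand into the reusable Lemma \ref{simpliuniv} (the bounded positive multiplier $e^{-f}$ preserves ISPD) and Lemma \ref{prop17caponettogen} (pullback along a continuous injection preserves ISPD), so its proof of the theorem is only a few lines. Your inline treatment is in fact more scrupulous than the paper's Lemma \ref{prop17caponettogen} on two points that you correctly flagged as the delicate ones: (a) verifying that the image measure is genuinely Radon on the possibly non-separable $\mathcal{H}$, which you obtain from $\sigma$-compact concentration, and (b) recovering $\lambda=0$ from $\mu=0$ by localizing to compacts where $h$ is a homeomorphism, rather than merely observing $\lambda(\mathcal{C})=0$ for every compact $\mathcal{C}$ (which by itself does not force a complex Radon measure to vanish).
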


 The topologies of $X$ and $X_{\gamma}$ might be equivalent, for instance if $g: [0, \infty) \to \mathbb{R}$ is a continuous function for which $\lim_{t\to \infty}g(t) \in (0, \infty)\cup \{\infty\}$, $g(0)=0$, $g(t) \in (0, \infty)$ for $t \in (0, \infty)$  and such that the radial kernel $(x,y) \in \mathbb{R}^{m}\times \mathbb{R}^{m} \to g(\|x-y\|) $ is conditionally negative definite, then the metric generated from this kernel on $\mathbb{R}^{m}$ is equivalent to the Euclidean metric on $\mathbb{R}^{m}$.
 
 The following  structure result elucidates some aspects concerning the inclusion $\mathcal{H}_{G_{\gamma}} \subset C_{0}(X)$.

 \begin{lem}\label{condnegkerinfty}  Let $X$ be a locally compact Hausdorff  space and  $\gamma: X \times X \to \mathbb{R}$ be a  continuous  conditionally negative definite metrizable kernel for which the function $x \in X \to \gamma(x,x) \in \mathbb{R}$ is bounded. Then $\mathcal{H}_{G_{\gamma}} \subset C_{0}(X)$  if and only if  there exists $z_{0} \in X $ for which the function $G_{\gamma,z_{0}}(x)= e^{-\gamma(x,z_{0})}$ is an element of $C_{0}(X)$.
 \end{lem}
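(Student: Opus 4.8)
The plan is to reduce the statement to Lemma \ref{Gausskerinfty} via the representation \eqref{condequa}. Since $\gamma$ is continuous, conditionally negative definite and metrizable, Proposition $3.2$ of \cite{berg0} gives a real Hilbert space $\mathcal{H}$, an injective map $h:X\to\mathcal{H}$ and $f:X\to\mathbb{R}$ with $f(x)=\gamma(x,x)/2$ and $\gamma(x,y)=\|h(x)-h(y)\|_{\mathcal{H}}^2+f(x)+f(y)$. Consequently
\[
e^{-\gamma(x,y)}=e^{-f(x)}\,e^{-\|h(x)-h(y)\|^2}\,e^{-f(y)}=a(x)\,G_{1}(h(x),h(y))\,a(y),
\]
where $a(x):=e^{-f(x)}=e^{-\gamma(x,x)/2}$. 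Because $x\mapsto\gamma(x,x)$ is bounded (by hypothesis), $a$ is a continuous function on $X$ that is bounded and bounded away from $0$; multiplication by such a function is a bimeasurable, norm-equivalent operation that does not affect membership in $C_{0}$ or $C$. First I would make this precise: the RKHS of $G_{\gamma}$ is exactly $a\cdot\mathcal{H}_{G_1\circ h}$ (pointwise product), and $G_{\gamma,z_0}\in C_0(X)$ iff $G_{1}(h(\cdot),h(z_0))\in C_0(X)$, using that $a$ and $1/a$ are bounded and continuous. Hence the whole statement transfers to the kernel $K(x,y)=G_1(h(x),h(y))$ on $X$.

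Next I would want to view $K$ as a genuine Gaussian kernel restricted to a subset of a Hilbert space, so that Lemma \ref{Gausskerinfty} applies. The subtlety is that $h$ need not be a homeomorphism onto its image, and $h(X)\subset\mathcal{H}$ with the induced topology may not be locally compact even though $X$ is. The cleanest fix is to work directly on $X$ with the pulled-back metric $D_\gamma(x,y)=\|h(x)-h(y)\|_{\mathcal H}$: this is the metric of the space $X_\gamma$ from the paper. I would argue that for each fixed $z_0$ the continuous function $x\mapsto\|h(x)-h(z_0)\|$ on $X$ controls $K_{z_0}$, and then re-run the proof of Lemma \ref{Gausskerinfty} verbatim with $\|x-z_0\|$ replaced by $D_\gamma(x,z_0)$ and $\|x-y\|$ by $D_\gamma(x,y)$ — nothing in that proof used the Hilbert space structure beyond having a metric coming from a conditionally negative definite kernel, which $D_\gamma^2=$ (essentially) $\gamma$ provides. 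The one place care is needed is the implication $(i)\Rightarrow(ii)$ type step and the density/Stone–Weierstrass argument: there we need the family $\{K_{z}\}_{z\in X}$ to separate points and the product $K_{z}K_{w}$ to stay in the algebra, which holds because $D_\gamma$ is a metric (injectivity of $h$) and because $e^{-a^2}e^{-b^2}=e^{-(a^2+b^2)}$ is again of Gaussian type.

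The main obstacle I anticipate is exactly this topological mismatch between $X$ and $h(X)$: one cannot simply cite Lemma \ref{Gausskerinfty} for $\mathcal{H}$ and the subset $h(X)$, because local compactness need not pass through $h$ and the induced topology on $h(X)$ need not agree with the original topology of $X$. My strategy to handle it is to never leave $X$: keep the topology of $X$ fixed, use $D_\gamma$ only as an auxiliary metric entering the formulas, and verify that the proof of Lemma \ref{Gausskerinfty} is really a statement about a kernel of the form $e^{-D^2(x,y)}$ on a locally compact Hausdorff space with $D$ a continuous metric such that $D^2$ is conditionally negative definite — a hypothesis met here. Modulo that transcription, the equivalence with ``$G_{\gamma,z_0}\in C_0(X)$ for some $z_0$'' follows, since in Lemma \ref{Gausskerinfty} condition $(i)$ (one $z_0$) already implied $(iii)$ ($\mathcal{H}_{G_\sigma}\subset C_0(X)$).
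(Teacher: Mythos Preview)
Your approach is essentially the paper's: use representation \eqref{condequa} to strip off the bounded, bounded-away-from-zero factor $e^{-f}$, apply the parallelogram law to the vectors $h(x),h(z_0),h(z)\in\mathcal{H}$ to bound $e^{-\|h(x)-h(z)\|^2}$ by a constant times $e^{-\|h(x)-h(z_0)\|^2/2}$ (so one $z_0$ gives all $z$), and conclude via Proposition~\ref{rkhscontained}. Two corrections to your plan: the proof of Lemma~\ref{Gausskerinfty} contains no Stone--Weierstrass or algebra argument (you are conflating it with Theorem~\ref{gausssunboundc0}), and since the parallelogram identity is applied to $h(x),h(z_0),h(z)$ as elements of $\mathcal{H}$ --- not to points of $h(X)$ as a topological space --- the topological mismatch you worry about never enters, so the paper simply performs that one-line estimate directly on $X$ without ever invoking Lemma~\ref{Gausskerinfty}.
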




\begin{thm}\label{gaussunboundc0} Let $X$ be a locally compact Hausdorff space and $\gamma: X \times X \to \mathbb{R}$ be a continuous conditionally negative definite kernel. The kernel
$$
(x,y) \in X\times X \to e^{-\gamma(x,y)} \in \mathbb{R}
$$
is $C_{0}(X)$-universal if and only if $\gamma$ is metrizable and $\mathcal{H}_{G_{\gamma}} \subset C_{0}(X)$.
\end{thm}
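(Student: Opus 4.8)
The plan is to reduce the statement to the combination of Theorem \ref{integrallygaussianonmetric}, Proposition \ref{rkhscontained}(ii), and Lemma \ref{condnegkerinfty}, exactly as the analogous statement Theorem \ref{gausssunboundc0} follows from the Hilbert-space results. First, for the ``only if'' direction: if $G_\gamma$ is $C_0(X)$-universal, then by the very definition of $C_0$-universality we have $\mathcal{H}_{G_\gamma}\subset C_0(X)$, which is one of the two claimed conditions; and $C_0$-universality forces the criterion of Equation \eqref{c0univform} to hold for $G_\gamma$, hence $G_\gamma$ is in particular ISPD (the double integral is always nonnegative once $\mathcal{H}_{G_\gamma}\subset C_0(X)\subset C(X)$ and $G_\gamma$ is bounded, because $G_\gamma(x,x)=e^{-\gamma(x,x)}$ and $\mathcal{H}_{G_\gamma}\subset C_0(X)$ forces $x\mapsto G_\gamma(x,x)$ bounded by Proposition \ref{rkhscontained}(ii)). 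Then Theorem \ref{integrallygaussianonmetric} gives that $\gamma$ is metrizable, which is the remaining claimed condition.

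For the ``if'' direction, assume $\gamma$ is metrizable and $\mathcal{H}_{G_\gamma}\subset C_0(X)$. By Proposition \ref{rkhscontained}(ii) the latter inclusion forces $x\mapsto G_\gamma(x,x)=e^{-\gamma(x,x)}$ to be bounded on $X$, equivalently $x\mapsto\gamma(x,x)$ is bounded from below. Hence the hypotheses of Theorem \ref{integrallygaussianonmetric} are met and $G_\gamma$ is ISPD, i.e. Equation \eqref{c0univform} holds: the only $\lambda\in\mathcal{M}(X)$ with $\int\int G_\gamma\,d\lambda\,d\overline\lambda=0$ is the zero measure. Combining this with $\mathcal{H}_{G_\gamma}\subset C_0(X)$, the characterization of \cite{Sriperumbudur3} quoted in the text (a kernel with RKHS in $C_0(X)$ is $C_0$-universal iff the only finite Radon measure annihilated by the associated quadratic form is zero) yields that $G_\gamma$ is $C_0(X)$-universal.

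The only subtle point — and the step I would expect to require the most care — is making sure that ``ISPD'' as defined in Definition \ref{intstricdefn} (which only requires $\mathcal{H}_K\subset C(X)$ and boundedness of $K$) really does combine with the hypothesis $\mathcal{H}_{G_\gamma}\subset C_0(X)$ to give the $C_0$-universality criterion of \cite{Sriperumbudur3}, since the latter is phrased for arbitrary finite Radon measures on a locally compact space rather than merely compactly supported ones; but the definition of ISPD used here is already stated in terms of Equation \eqref{c0univform}, i.e. over all of $\mathcal{M}(X)$, so there is no gap. An alternative route, which the text explicitly mentions, is to bypass \cite{Sriperumbudur3} entirely and prove $C_0$-universality directly via the $C_0$-version of the Stone--Weierstrass theorem together with Lemma \ref{condnegkerinfty} (which translates $\mathcal{H}_{G_\gamma}\subset C_0(X)$ into the single concrete condition that some $G_{\gamma,z_0}\in C_0(X)$); that is essentially the argument behind Theorem \ref{gausssunboundc0}, and it transfers verbatim once one embeds $X$ metrically into the Hilbert space $\mathcal{H}$ underlying the representation \eqref{condequa} of $\gamma$.
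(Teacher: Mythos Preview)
Your proposal is correct and follows essentially the same route as the paper's proof: both directions reduce to Theorem \ref{integrallygaussianonmetric} combined with the characterization of $C_0$-universality via Equation \eqref{c0univform}. The only cosmetic difference is that for the ``only if'' direction the paper routes through SPD and Theorem \ref{improvSchoen} to get metrizability, whereas you route through ISPD and Theorem \ref{integrallygaussianonmetric}; your version is arguably more careful, since you explicitly verify (via Proposition \ref{rkhscontained}(ii)) that $x\mapsto\gamma(x,x)$ is bounded from below before invoking Theorem \ref{integrallygaussianonmetric} in the ``if'' direction, a hypothesis the paper leaves implicit.
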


\begin{thm}\label{gaussispd} Let $f:[0, \infty) \to \mathbb{R}$ be a non constant completely monotone function  and $\gamma: X \times X \to [0, \infty)$ be a continuous conditionally negative definite metrizable kernel, then the kernel
$$
(x,y) \in X\times X \to f(\gamma(x,y)) \in \mathbb{R}
$$
is ISPD.
\end{thm}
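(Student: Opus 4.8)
The plan is to reduce the statement to the single-exponential case already handled in Theorem \ref{integrallygaussianonmetric}, using the Bernstein--Hausdorff--Widder integral representation of completely monotone functions. Since $f:[0,\infty)\to\mathbb{R}$ is completely monotone, there is a nonnegative measure $\mu\in\mathcal{M}([0,\infty))$ with $f(t)=\int_{[0,\infty)}e^{-rt}\,d\mu(r)$; the finiteness of $\mu$ is forced by $f(0)<\infty$ together with monotone convergence, and $f$ being non-constant forces $\mu((0,\infty))>0$ (otherwise $\mu$ is a point mass at $0$ and $f$ is constant). Hence
$$
f(\gamma(x,y))=\int_{[0,\infty)}e^{-r\gamma(x,y)}\,d\mu(r),\qquad (x,y)\in X\times X .
$$
Because $\gamma\geq 0$ and $f$ is non-increasing we have $0\le f(\gamma(x,y))\le f(0)$, so the kernel is bounded; it is continuous since $\gamma$ and $f$ are, and therefore $\mathcal{H}_{f\circ\gamma}\subset C(X)$ by Proposition \ref{rkhscontained}(i). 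It is positive definite, being a $\mu$-average of the positive definite kernels $e^{-r\gamma}$ (Schoenberg's theorem, recalled in Section \ref{Universality of Schoenberg-Gaussian kernels}; the case $r=0$ is the constant kernel $1$). So it is legitimate to ask whether it is ISPD, i.e.\ whether Equation \ref{c0univform} forces the measure to vanish.

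Next, fix $\lambda\in\mathcal{M}(X)$ with $\int_X\int_X f(\gamma(x,y))\,d\lambda(x)\,d\overline{\lambda}(y)=0$. Since $|e^{-r\gamma(x,y)}|\le 1$ and $\mu$, $|\lambda|$ are finite measures, Fubini's theorem applies and yields
$$
0=\int_{[0,\infty)}\Big(\int_X\int_X e^{-r\gamma(x,y)}\,d\lambda(x)\,d\overline{\lambda}(y)\Big)\,d\mu(r)=\int_{[0,\infty)}I(r)\,d\mu(r),
$$
where $I(r):=\int_X\int_X e^{-r\gamma(x,y)}\,d\lambda(x)\,d\overline{\lambda}(y)\ge 0$ for every $r\ge 0$ by positive definiteness of $e^{-r\gamma}$. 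An integral of a nonnegative function being zero forces $I(r)=0$ for $\mu$-almost every $r$; since $\mu((0,\infty))>0$, there exists $r_{0}>0$ with $I(r_{0})=0$.

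It remains to invoke the exponential case for this particular $r_0$. The kernel $r_{0}\gamma$ is continuous and conditionally negative definite (a positive multiple of a CND kernel is CND); it is metrizable, with $D_{r_{0}\gamma}=\sqrt{r_{0}}\,D_{\gamma}$, hence $r_0\gamma$ inherits the injectivity of the associated embedding; and $x\mapsto r_{0}\gamma(x,x)\ge 0$ is bounded from below. Therefore, by Theorem \ref{integrallygaussianonmetric}, the kernel $e^{-r_{0}\gamma}$ is ISPD, so $I(r_{0})=0$ gives $\lambda=0$. This shows that $f\circ\gamma$ satisfies Equation \ref{c0univform} and hence is ISPD.

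The only genuinely delicate points are the justification of Fubini (handled uniformly by $|e^{-r\gamma}|\le 1$ and finiteness of $\mu$ and $|\lambda|$) and the observation that metrizability and lower-boundedness of the diagonal are stable under the rescaling $\gamma\mapsto r_{0}\gamma$; everything else is a direct reduction to Theorem \ref{integrallygaussianonmetric}, so I do not expect a serious obstacle beyond this bookkeeping.
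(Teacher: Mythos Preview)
Your proof is correct and follows the same strategy as the paper's: write $f$ via its Bernstein representation $f(t)=\int_{[0,\infty)}e^{-rt}\,d\mu(r)$, note that non-constancy forces $\mu((0,\infty))>0$, and reduce to the single-exponential ISPD result (Theorem \ref{integrallygaussianonmetric}). The only difference is that the paper delegates the reduction step to an external reference (Theorem 3.7 in \cite{jean2020}, essentially the content of Lemma \ref{Hadamardintegral} here), whereas you carry out the Fubini/nonnegativity argument explicitly; your version is thus more self-contained but not genuinely different in approach.
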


\section{Gneiting class and related kernels}\label{Kernels on product of spaces}
Based on the results of the previous Sections we  are able to prove qualitative properties of some important generalizations of the Gaussian (and related) kernel to a product of spaces. 

\subsection{Gneiting class}
A popular example, especially on geostatistics, of such kernels is the \textbf{Gneiting class}  \cite{Gneitingclass}, initially proposed as the family of positive definite kernels
$$
((u,x),(v,y)) \in ( \mathbb{R}^{m^{\prime}}\times \mathbb{R}^{m} )^{2} \to g(\|u-v\|^{2})^{-m/2}\psi \left (\frac{\|x-y\|^{2}}{g(\|u-v\|^{2})}\right ) \in \mathbb{R}   
$$
where $g,\psi : [0, \infty) \to \mathbb{R}$ are continuous and nonconstant functions, $g$ is a positive function, $\psi$ is completely monotone and $g$ is a Bernstein function, that is $g \in C^{\infty}((0, \infty))$ and $g^{\prime}$ is completely monotone. Several extensions and applications of this type of kernel have been proposed and proved \cite{porcu30}, \cite{GneitingClassmeneoliporc}. We focus on a generalization that encloses all of the above mentioned.  

Let $X$ be a  Hausdorff space, $\gamma: X \times X \to (0, \infty)$ be a  continuous conditionally negative definite kernel and $A : X \times X \to \mathbb{C}$ be a continuous  kernel. Suppose that the kernel 
	$$
	(u,v) \in X \times X \to C(u,v) := A(u,v)\gamma(u,v)^{m/2} \in \mathbb{R}
	$$
is positive definite. Under this hypothesis we define the kernel $G_{A, \gamma}: (X \times \mathbb{R}^{m}) \times (X \times \mathbb{R}^{m}) \to \mathbb{C}$ as
$$
 G_{A, \gamma}((u,x),(v,y)) := A(u,v)e^{-\|x-y\|^{2}/\gamma(u,v)} 
$$

\begin{thm}\label{genGaussianuniversal} The kernel $G_{A, \gamma}$ is positive definite and continuous. If  $\gamma$ is a metrizable kernel, then $G_{A, \gamma}$ is  SPD (or universal) if and only if $A(u,u)>0$ for every $u \in X$.  
\end{thm}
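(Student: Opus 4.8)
The plan is to reduce the assertion to a Gaussian integral over a frequency variable $\xi\in\mathbb{R}^{m}$, from which positive definiteness is transparent and universality follows from Theorem \ref{improvSchoen} together with the product lemmas of Subsection \ref{Products of positive definite kernels}. Starting from the elementary identity $e^{-\|z\|^{2}/t}=(4\pi)^{-m/2}t^{m/2}\int_{\mathbb{R}^{m}}e^{-\frac{t}{4}\|\xi\|^{2}}e^{i\langle\xi,z\rangle}\,d\xi$ (valid for $t>0$, $z\in\mathbb{R}^{m}$, with $\langle\cdot,\cdot\rangle$ the Euclidean inner product), taking $t=\gamma(u,v)$ and $z=x-y$ gives
$$
G_{A,\gamma}((u,x),(v,y))=(4\pi)^{-m/2}\int_{\mathbb{R}^{m}}C(u,v)\,e^{-\frac{\|\xi\|^{2}}{4}\gamma(u,v)}\,e^{i\langle\xi,x-y\rangle}\,d\xi ,
$$
where $C(u,v)=A(u,v)\gamma(u,v)^{m/2}$, and the integral converges absolutely because $\gamma>0$. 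For each fixed $\xi$ the integrand is a product of positive definite kernels: $C$ on $X$ (hypothesis), $e^{-\frac{\|\xi\|^{2}}{4}\gamma(\cdot,\cdot)}$ on $X$ (Schoenberg's theorem \cite{schoenmetric}, since $\gamma$ is conditionally negative definite), and $(x,y)\mapsto e^{i\langle\xi,x\rangle}\overline{e^{i\langle\xi,y\rangle}}$ on $\mathbb{R}^{m}$; hence it is a positive definite kernel on $(X\times\mathbb{R}^{m})^{2}$, and integration against the nonnegative density $(4\pi)^{-m/2}$ preserves positive definiteness. Continuity of $G_{A,\gamma}$ is immediate from continuity of $A$ and $\gamma$ and positivity of $\gamma$.

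For the ``only if'' direction (metrizability is not needed here): if $A(u_{0},u_{0})=0$ for some $u_{0}$, then $G_{A,\gamma}((u_{0},x),(u_{0},x))=A(u_{0},u_{0})=0$ for every $x$, so $G_{A,\gamma}$ is not SPD already at a single point, and, every universal kernel being SPD, it is not universal either. For the ``if'' direction, assume $\gamma$ metrizable and $A(u,u)>0$ for all $u$. Because $(u,x)\mapsto G_{A,\gamma}((u,x),(u,x))=A(u,u)$ is locally bounded and the sections $(u,x)\mapsto G_{A,\gamma}((u,x),(v,y))$ are continuous, Proposition \ref{rkhscontained}(i) yields $\mathcal{H}_{G_{A,\gamma}}\subset C(X\times\mathbb{R}^{m})$, so it suffices to check the measure criterion for universality, the discrete case then giving SPD. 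Let $\lambda\in\mathcal{M}_{c}(X\times\mathbb{R}^{m})$ satisfy $\int\int G_{A,\gamma}\,d\lambda\,d\overline{\lambda}=0$. Inserting the representation and using Fubini, this number equals $(4\pi)^{-m/2}\int_{\mathbb{R}^{m}}\Theta(\xi)\,d\xi$ with
$$
\Theta(\xi):=\int\int C(u,v)\,e^{-\frac{\|\xi\|^{2}}{4}\gamma(u,v)}\,e^{i\langle\xi,x-y\rangle}\,d\lambda(u,x)\,d\overline{\lambda}(v,y)\ \ge\ 0 ,
$$
so $\Theta(\xi)=0$ for almost every $\xi$, hence for every $\xi$ by continuity of $\Theta$.

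Fix $\xi\neq0$ and put $r=\|\xi\|^{2}/4>0$. Let $T_{\xi}\lambda\in\mathcal{M}_{c}(X)$ be the pushforward of the measure $e^{i\langle\xi,x\rangle}\,d\lambda(u,x)$ along $(u,x)\mapsto u$; factoring $e^{i\langle\xi,x-y\rangle}=e^{i\langle\xi,x\rangle}\overline{e^{i\langle\xi,y\rangle}}$ yields $\Theta(\xi)=\int_{X}\int_{X}C(u,v)e^{-r\gamma(u,v)}\,d(T_{\xi}\lambda)(u)\,d\overline{(T_{\xi}\lambda)}(v)$. The kernel $(u,v)\mapsto C(u,v)e^{-r\gamma(u,v)}$ on $X$ is the Schur product of the positive definite kernel $C$, whose diagonal $C(u,u)=A(u,u)\gamma(u,u)^{m/2}$ is strictly positive, and of $e^{-r\gamma}$, which is universal because $r\gamma$ is a continuous conditionally negative definite metrizable kernel (Theorem \ref{improvSchoen}); by the product results of Subsection \ref{Products of positive definite kernels} it is therefore universal on $X$. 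Since $\Theta(\xi)=0$, the measure criterion forces $T_{\xi}\lambda=0$ for every $\xi\neq0$. Thus, for every Borel set $E\subseteq X$, the finite measure $F\mapsto\lambda(E\times F)$ on $\mathbb{R}^{m}$ has Fourier transform vanishing off the origin, hence everywhere by continuity, hence it is the zero measure; so $\lambda(E\times F)=0$ for all Borel $E\subseteq X$ and $F\subseteq\mathbb{R}^{m}$, and since $\lambda$ has compact support one recovers $\int f\,d\lambda=0$ for every $f\in C_{c}(X\times\mathbb{R}^{m})$ by approximating $f$ uniformly on that support by finite sums of products $g(u)h(x)$ (Stone--Weierstrass). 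Hence $\lambda=0$.

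The step I expect to be the main obstacle is setting up the decoupling cleanly: identifying the Gaussian frequency representation so that, for each $\xi$, the integrand splits as a Schur product of the ``spatial'' kernel $C(u,v)e^{-r\gamma(u,v)}$ on $X$ --- which is exactly what Theorem \ref{improvSchoen} and the product lemmas handle --- and a rank-one exponential in $x,y$; and then pushing the argument through the measure-theoretic bookkeeping (absolute convergence and Fubini, continuity of $\Theta$, injectivity of the Fourier transform on finite measures, and the recovery of $\lambda$ from its values on rectangles using compact support). Once this reduction is in place, no further input is needed beyond the earlier sections.
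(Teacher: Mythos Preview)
Your proof is correct and follows essentially the same approach as the paper's: the Gaussian frequency representation, pointwise vanishing of the nonnegative integrand $\Theta(\xi)$, universality of $C(u,v)e^{-r\gamma(u,v)}$ on $X$ via Theorem \ref{improvSchoen} and the Schur-product lemma, and Fourier injectivity on $\mathbb{R}^{m}$ to force $\lambda_{E}=0$ for every Borel $E\subset X$. The only differences are cosmetic: the paper packages your Fubini-plus-continuity step as Lemma \ref{Hadamardintegral} and your pushforward $T_{\xi}\lambda$ argument as Lemma \ref{Kroengen1}, whereas you carry both out by hand.
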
	   

 As a consequence that the functions fulfilling Bochner's Theorem are uniquely representable,  the hypothesis that the kernel $C$ is positive definite is in fact a necessary condition for $G_{A, \gamma}$ be positive definite. Also, although we are not imposing that the kernel $A$ is positive definite, it is positive definite because the kernel $C$ is positive definite and $\gamma^{-m/2}(u,v)$ as well by Lemma \ref{gammakernel}. 


 \begin{thm}\label{genGaussianintegralllymetric}  If the kernel $\gamma$ is metrizable, the function $x \in X \to \gamma(x,x) \in \mathbb{R}$ is bounded from below and $C$ is bounded, then the kernel $G_{A, \gamma}$ is ISPD if and only if  if and only if  $A(u,u)>0$ for every $u \in X$. \\
 If $X$ is a locally compact space, the inclusion $\mathcal{H}_{G_{A, \gamma}}\subset C_{0}(X\times \mathbb{R}^{m})$ occurs if and only if $\mathcal{H}_{A} \subset C_{0}(X)$.
\end{thm}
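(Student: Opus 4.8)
The plan is to reduce both assertions to results already established for the Gaussian kernel on Hilbert spaces (Theorems \ref{Gauszao}, \ref{integrallygaussianonmetric}, \ref{gausssunboundc0}) combined with the Schur/Hadamard product analysis of Subsection \ref{Products of positive definite kernels}. The starting observation is that $G_{A,\gamma}$ is a Schur product: writing $G_{A,\gamma}((u,x),(v,y)) = A(u,v)\cdot e^{-\|x-y\|^2/\gamma(u,v)}$, the second factor is $e^{-\gamma_0((u,x),(v,y))}$ where $\gamma_0((u,x),(v,y)) := \|x-y\|^2/\gamma(u,v)$ can be understood as (a restriction of) a Gaussian-type kernel since $1/\gamma$ is positive definite (by Lemma \ref{gammakernel}, as noted after Theorem \ref{genGaussianuniversal}) and $\gamma$ is metrizable. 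More precisely, the $(u,v)$-dependence of the bandwidth is absorbed by realizing the embedding $x\mapsto x$ into $\mathbb{R}^m$ with the $\gamma(u,v)^{-1/2}$-weighting; one shows $(u,x)\mapsto e^{-\|x-y\|^2/\gamma(u,v)}$ is itself the restriction to a subset of a Hilbert space of a genuine Gaussian kernel, using that $\|x-y\|^2/\gamma(u,v)$ is conditionally negative definite and metrizable on $X\times\mathbb{R}^m$ whenever $\gamma$ is metrizable — precisely the computation underlying Theorem \ref{genGaussianuniversal}.

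For the first assertion, I would invoke Theorem \ref{integrallygaussianonmetric} (or Theorem \ref{gaussispd}) applied to the CND kernel $\Gamma((u,x),(v,y)) := \|x-y\|^2/\gamma(u,v)$ on $X\times\mathbb{R}^m$: under the hypotheses that $\gamma$ is metrizable and $x\mapsto\gamma(x,x)$ is bounded from below (so that $\Gamma$ is metrizable and $\Gamma((u,x),(u,x))=0$ is bounded), the kernel $e^{-\Gamma}$ is ISPD. Then I would apply the product lemma from Subsection \ref{Products of positive definite kernels}: the Schur product of a bounded continuous ISPD kernel with a bounded continuous positive definite kernel $A$ is ISPD precisely when $A$ does not degenerate on the diagonal, i.e. $A(u,u)>0$ for all $u\in X$ — here one uses that $A$ is positive definite (again by the remark after Theorem \ref{genGaussianuniversal}), continuous, and bounded because $C$ is bounded and $\gamma^{-m/2}$ is bounded below away from zero on the diagonal. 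The necessity of $A(u,u)>0$ is immediate: if $A(u_0,u_0)=0$ then by Cauchy–Schwarz $A(u_0,v)=0$ for all $v$, so $G_{A,\gamma}$ vanishes identically in the $u_0$-row and a point mass at $(u_0,x)$ lies in the kernel of the associated quadratic form.

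For the second assertion, the inclusion $\mathcal{H}_{G_{A,\gamma}}\subset C_0(X\times\mathbb{R}^m)$ is analyzed via Proposition \ref{rkhscontained}(ii): this holds iff $(u,x)\mapsto G_{A,\gamma}((u,x),(v,y))$ is in $C_0$ for every fixed $(v,y)$ and the diagonal is bounded. Since for fixed $v$ the Gaussian factor $e^{-\|x-y\|^2/\gamma(u,v)}$ tends to a strictly positive limit as $x$ stays bounded and $u$ varies (it does not decay in the $u$-direction), the $C_0$-decay must come entirely from $A$; conversely the Gaussian factor does decay as $\|x\|\to\infty$ on $\mathbb{R}^m$, and $\mathbb{R}^m$ is itself a space in which closed bounded sets are compact, so there is no obstruction from the $\mathbb{R}^m$-factor — this is where the product structure $X\times\mathbb{R}^m$ (with $\mathbb{R}^m$ finite-dimensional) matters, in contrast to Lemma \ref{Gausskerinfty}. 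Thus $(u,x)\mapsto G_{A,\gamma}((u,x),(v,y))\in C_0(X\times\mathbb{R}^m)$ iff $u\mapsto A(u,v)\in C_0(X)$, and boundedness of the diagonal of $G_{A,\gamma}$ (which equals $A(u,u)$) is equivalent to boundedness of the diagonal of $A$; by Proposition \ref{rkhscontained}(ii) again this is exactly $\mathcal{H}_A\subset C_0(X)$. The main obstacle I anticipate is making rigorous the claim that $\Gamma((u,x),(v,y))=\|x-y\|^2/\gamma(u,v)$ really is a metrizable CND kernel on the product space and identifying it as a restriction of an honest Gaussian — i.e. exhibiting the Hilbert-space embedding $(u,x)\mapsto h(u,x)$ with $\|h(u,x)-h(v,y)\|^2 = \Gamma((u,x),(v,y))$ up to a diagonal term — so that the Hilbert-space theorems of Section \ref{Gaussian kernel on Hilbert spaces} genuinely apply; this should follow from the same construction used in the proof of Theorem \ref{genGaussianuniversal}, which I would cite rather than repeat.
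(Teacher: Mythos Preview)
Your approach to the first assertion has a genuine gap: the kernel $\Gamma((u,x),(v,y)) := \|x-y\|^{2}/\gamma(u,v)$ is \emph{not} conditionally negative definite in general, so Theorems \ref{integrallygaussianonmetric} and \ref{gaussispd} do not apply to it, and $e^{-\Gamma}$ is not even positive definite as a kernel on $X\times\mathbb{R}^{m}$. A concrete counterexample: take $X=\{1,2\}$ with $\gamma_{11}=\gamma_{22}=1$, $\gamma_{12}=2$ (which is CND and metrizable), and evaluate at the four points $(1,0),(2,0),(1,a),(2,a)\in X\times\mathbb{R}$ with coefficients $c=(1,-1,1,-1)$. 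A direct computation gives
\[
\sum_{i,j} c_{i}c_{j}\,e^{-\Gamma_{ij}} \;=\; 4\bigl(e^{-a^{2}}-e^{-a^{2}/2}\bigr) \;<\; 0
\]
for every $a>0$, so $e^{-\Gamma}$ fails to be positive definite. Consequently there is no Hilbert-space embedding $(u,x)\mapsto h(u,x)$ with $\|h(u,x)-h(v,y)\|^{2}=\Gamma((u,x),(v,y))$ up to a diagonal term, and the proof of Theorem \ref{genGaussianuniversal} does not provide one --- that proof never treats $\Gamma$ as CND. This is precisely why the hypothesis singles out $C(u,v)=A(u,v)\gamma(u,v)^{m/2}$ rather than $A$ alone: the factor $\gamma^{m/2}$ is what is needed to make the Fourier integrand positive definite.

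The paper's route avoids $\Gamma$ entirely. One writes
\[
G_{A,\gamma}((u,x),(v,y)) \;=\; \frac{1}{2^{m}\pi^{m/2}}\int_{\mathbb{R}^{m}} C(u,v)\,e^{-\gamma(u,v)\|\xi\|^{2}/4}\,e^{-i(x-y)\cdot\xi}\,d\xi,
\]
and applies Lemma \ref{Hadamardintegral} to reduce to each fixed $\xi\neq 0$. For each such $\xi$ the factor $e^{-\gamma(u,v)\|\xi\|^{2}/4}$ is ISPD on $X$ by Theorem \ref{integrallygaussianonmetric} (this is where metrizability of $\gamma$ and the lower bound on $\gamma(u,u)$ enter), so by Lemma \ref{Schurmulgen1} the product $C(u,v)e^{-\gamma(u,v)\|\xi\|^{2}/4}$ is ISPD on $X$ once $C(u,u)>0$, equivalently $A(u,u)>0$. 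Lemma \ref{Kroengen1} then forces $\widehat{\lambda_{B}}(\xi)=0$ for all $\xi\neq 0$ and all $B\in\mathscr{B}(X)$, whence $\lambda=0$. Your $C_{0}$ argument for the second assertion is essentially the paper's, via Proposition \ref{rkhscontained}; the only point to tighten is the backward direction, where one must use that $\gamma(u,v)\leq \sup_{z\in\mathcal{C}\cup\{v\}}\gamma(z,z)$ for $u$ in a compact set $\mathcal{C}$ (from the representation \ref{condequa}) to get a uniform Gaussian bound in $x$.
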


In particular, by using the previous theorem  and Theorem $3.7$ in \cite{jean2020}  a kernel among the initially proposed Gneiting class in \cite{Gneitingclass} is $C_{0}$-universal if and only if the function $g$ is unbounded and $\lim_{t \to \infty} \psi(t)=0$.  

When $X$ is a finite set (on which the kernel $G_{A, \gamma}$ can be understood as a matrix valued kernel on $\mathbb{R}^{m}$), it is possible to characterize when $G_{A, \gamma}$ is universal/$C_{0}$-universal (when $X$ is finite we always have that $\mathcal{H}_{A} \subset C_{0}(X)$)  even if  $\gamma$ is not metrizable. It is not clear if on the general setting of Theorem \ref{genGaussianuniversal} the same approach is possible.

\begin{thm}\label{secondCaponnetogeneralization1M22} Let $m, \ell \in \mathbb{N}$ and the matrix valued kernel $G_{A, \gamma}: \mathbb{R}^{m} \times \mathbb{R}^{m} \to M_{\ell}(\mathbb{C})$  given by
	$$
	[G_{A, \gamma}(x,y)]_{\mu, \nu} :=a_{\mu, \nu}e^{-\|x-y\|^{2}/\gamma_{\mu, \nu }} , \quad a_{\mu, \nu} \in \mathbb{C}, \quad  \gamma_{\mu, \nu } >0. 
	$$
	Assume that the matrix $C:=[a_{\mu, \nu}\gamma_{\mu, \nu}^{m/2}]_{\mu, \nu =1}^{\ell}\in M_{\ell}(\mathbb{C})$ is positive semidefinite and the matrix $\Gamma= [\gamma_{\mu, \nu}]\in M_{\ell}(\mathbb{R})$ is conditionally negative definite, then 
	\begin{enumerate}
	\item[(i)] The kernel $K$ is strictly positive definite (universal)  if and only if the matrix $A$ is positive definite.
		\item[(ii)]The kernel $K$ is $C_{0}$-universal  if and only if  for every $F \subset \{1,\ldots, \ell\}$ for which $2\gamma_{\mu,\nu} = \gamma_{\mu, \mu} + \gamma_{\nu, \nu}$ for every $\mu, \nu \in F$, the matrix $C_{F}:= [a_{\mu, \nu}\gamma_{\mu, \nu}^{m/2}]_{\mu, \nu \in F} \in M_{|F|}(\mathbb{C})$ is positive definite.
	\end{enumerate}
\end{thm}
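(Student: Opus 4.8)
The plan is to regard $K:=G_{A,\gamma}$ as the scalar valued kernel $L\big((x,\mu),(y,\nu)\big)=a_{\mu,\nu}e^{-\|x-y\|^{2}/\gamma_{\mu,\nu}}$ on the locally compact space $Z:=\mathbb{R}^{m}\times\{1,\dots,\ell\}$. On each clopen sheet $\mathbb{R}^{m}\times\{\mu\}$ the function $L_{(y,\nu)}$ is continuous and lies in $C_{0}(\mathbb{R}^{m})$, while $L\big((x,\mu),(x,\mu)\big)=a_{\mu,\mu}$ is bounded, so Proposition~\ref{rkhscontained} gives $\mathcal{H}_{L}\subset C_{0}(Z)\subset C(Z)$; hence universality, strict positive definiteness and $C_{0}$-universality can all be tested through measures as in \cite{micchelli2006universal} and \cite{Sriperumbudur3}. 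The starting point is the Bochner identity obtained from the Fourier transform of a Gaussian,
\[
a_{\mu,\nu}e^{-\|x-y\|^{2}/\gamma_{\mu,\nu}}=\frac{1}{(2\sqrt{\pi})^{m}}\int_{\mathbb{R}^{m}}C_{\mu,\nu}\,e^{-\gamma_{\mu,\nu}\|\omega\|^{2}/4}\,e^{i\langle x-y,\omega\rangle}\,d\omega ,
\]
whose integrand is, for each $\omega$, the Schur product of the positive semidefinite matrices $[C_{\mu,\nu}]$ and $[e^{-\gamma_{\mu,\nu}\|\omega\|^{2}/4}]$ (the latter by Schoenberg, as $\Gamma$ is CND), hence positive semidefinite; this already shows $K$ is positive definite and continuous. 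Writing $C_{\mu,\nu}=\langle\xi_{\mu},\xi_{\nu}\rangle$ for vectors $\xi_{\mu}$ in a finite dimensional inner product space, $\gamma_{\mu,\nu}=\|h_{\mu}-h_{\nu}\|^{2}+f_{\mu}+f_{\nu}$ with $f_{\mu}=\gamma_{\mu,\mu}/2>0$ (Equation~\ref{condequa}), and $e^{-t\|u-v\|^{2}}=\langle\phi_{t}(u),\phi_{t}(v)\rangle$ for the Gaussian feature map $\phi_{t}$, two applications of Fubini turn the quadratic form of any $\lambda=\sum_{\mu}\lambda_{\mu}\otimes\delta_{\mu}\in\mathcal{M}(Z)$ into
\[
\int_{Z}\!\int_{Z}L\,d\lambda\,d\overline{\lambda}=\frac{1}{(2\sqrt{\pi})^{m}}\int_{\mathbb{R}^{m}}\Big\|\sum_{\mu=1}^{\ell}\widehat{\lambda_{\mu}}(\omega)\,e^{-f_{\mu}\|\omega\|^{2}/4}\,\xi_{\mu}\otimes\phi_{\|\omega\|^{2}/4}(h_{\mu})\Big\|^{2}d\omega ,
\]
where $\widehat{\lambda_{\mu}}$ denotes the Fourier transform of $\lambda_{\mu}$.

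Since the integrand above is continuous and nonnegative, the double integral vanishes iff the bracketed vector is $0$ for every $\omega\neq 0$. Grouping $\{1,\dots,\ell\}$ into the classes $F_{1},\dots,F_{r}$ on which $h$ is constant, and using that the Gaussian kernel on a Hilbert space is SPD (Theorem~\ref{Gauszinho}), so that $\{\phi_{t}(h_{\mu})\}$ is linearly independent over indices lying in distinct classes whenever $t>0$, this is equivalent to
\[
\sum_{\mu\in F_{i}}\widehat{\lambda_{\mu}}(\omega)\,e^{-f_{\mu}\|\omega\|^{2}/4}\,\xi_{\mu}=0\qquad\text{for all }\omega\neq 0\text{ and all }i .
\]
Now the subsets $F$ occurring in statement~(ii) are exactly the subsets of the classes $F_{i}$ (indeed $2\gamma_{\mu,\nu}=\gamma_{\mu,\mu}+\gamma_{\nu,\nu}$ on $F$ says precisely that $h$ is constant on $F$), and $C_{F}=[\langle\xi_{\mu},\xi_{\nu}\rangle]_{\mu,\nu\in F}$ is positive definite iff $\{\xi_{\mu}\}_{\mu\in F}$ is linearly independent. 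Thus, if every $C_{F_{i}}$ is positive definite, the displayed system forces $\widehat{\lambda_{\mu}}\equiv 0$, so $\lambda=0$ and $L$ is $C_{0}$-universal. Conversely, if some $C_{F_{i}}$ is singular, choose $0\neq(c_{\mu})_{\mu\in F_{i}}$ with $\sum_{\mu\in F_{i}}c_{\mu}\xi_{\mu}=0$, fix $\tau>\max_{\mu}f_{\mu}$, and let $\lambda_{\mu}$ be $c_{\mu}$ times the Gaussian probability measure whose transform is $e^{-(\tau-f_{\mu})\|\omega\|^{2}/4}$ for $\mu\in F_{i}$ and $\lambda_{\mu}=0$ otherwise; then the displayed system holds, $\lambda\neq 0$, and $\int\!\int L\,d\lambda\,d\overline{\lambda}=0$, so $L$ is not $C_{0}$-universal. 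This proves~(ii).

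For (i), one direction is immediate: a universal $L$ is SPD (on a finite subset of $Z$, density in $C(\mathcal{C})$ forces the Gram matrix to be invertible), and restricting to the points $(x_{0},1),\dots,(x_{0},\ell)$ shows that their Gram matrix, which equals $A$, is positive definite. For the converse, assume $A$ is positive definite and let $\lambda\in\mathcal{M}_{c}(Z)$ satisfy $\int\!\int L\,d\lambda\,d\overline{\lambda}=0$; now each $\widehat{\lambda_{\mu}}$ is bounded and extends to an entire function of exponential type. Fix a class $F_{i}$ and split it as $F_{i}=\bigsqcup_{t=1}^{q}F_{i,t}$ according to the value of $f_{\mu}$, with distinct values $\phi_{1}<\dots<\phi_{q}$. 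On each $F_{i,t}$ the kernel $\gamma$ is constant equal to $2\phi_{t}$, so $C_{F_{i,t}}=(2\phi_{t})^{m/2}A_{F_{i,t}}$ is positive definite (a principal submatrix of $A$), i.e.\ $\{\xi_{\mu}\}_{\mu\in F_{i,t}}$ is linearly independent. Putting $P_{t}(\omega):=\sum_{\mu\in F_{i,t}}\widehat{\lambda_{\mu}}(\omega)\xi_{\mu}$, the system above becomes $\sum_{t=1}^{q}e^{-\phi_{t}\|\omega\|^{2}/4}P_{t}(\omega)=0$; multiplying by $e^{\phi_{1}\|\omega\|^{2}/4}$ and using that the $P_{t}$ are bounded yields $\|P_{1}(\omega)\|\le Ce^{-(\phi_{2}-\phi_{1})\|\omega\|^{2}/4}$, and then, by linear independence of $\{\xi_{\mu}\}_{\mu\in F_{i,1}}$, each $\widehat{\lambda_{\mu}}$ with $\mu\in F_{i,1}$ inherits the same Gaussian decay. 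A Paley--Wiener argument --- a compactly supported finite Radon measure whose Fourier transform is $O(e^{-a\|\omega\|^{2}})$ for some $a>0$ must vanish, because Fourier inversion then represents it by an entire function vanishing off a compact set --- forces $\lambda_{\mu}=0$ for $\mu\in F_{i,1}$. Iterating over $F_{i,2},\dots,F_{i,q}$ and over all $i$ gives $\lambda=0$, so $L$ is universal, hence also SPD.

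The Gaussian Fourier computation, the Fubini interchanges, and the metrizable case of (i) (there the classes $F_{i}$ are singletons, $A$ positive definite gives $\xi_{\mu}\neq 0$, and the displayed system at once yields $\widehat{\lambda_{\mu}}\equiv 0$) are routine. The delicate point --- and the reason the hypothesis in (i) is ``$A$ positive definite'' rather than the weaker one in (ii) --- is the non-metrizable case of the converse in (i): the displayed system no longer isolates the individual $\widehat{\lambda_{\mu}}$, so one must combine the ordering of the scalars $f_{\mu}$ inside a level set of $h$ with the analyticity of the Fourier transform of a compactly supported measure. I expect this Paley--Wiener step, together with verifying that the level-set peeling is exhaustive, to be the main obstacle.
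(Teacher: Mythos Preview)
Your proof is correct and shares the paper's Fourier--analytic scaffolding (the Bochner identity, the pointwise nonnegativity of the integrand, and the reduction to the equivalence classes $F_i$ on which the ``Hilbert part'' $h$ of $\Gamma$ is constant). The treatment of (ii) is essentially the same as the paper's, only phrased via feature maps $\xi_\mu$ and $\phi_t(h_\mu)$ rather than matrix manipulations; your Gaussian counterexample plays the same role as the paper's Schwartz-function construction $\widehat{\lambda_\mu}=v_\mu e^{\varsigma_\mu\|\xi\|^2/4}\phi(\xi)$ with $\phi\in C_c^\infty$.

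Where you genuinely diverge is in the converse of (i). After reaching the per-class system
\[
\sum_{\mu\in F_i}c_{\mu,\nu}\,e^{-\varsigma_\mu\|\xi\|^2/4}\,\widehat{\lambda_\mu}(\xi)=0,
\]
the paper applies Schwartz's Paley--Wiener theorem in the form ``$\widehat{\lambda_\nu}$ is nonzero on a dense open set'', divides through, takes the inverse Fourier transform to obtain $\sum_\mu a_{\mu,\nu}\int e^{-\|x-y\|^2/\varsigma_\mu}\,d\lambda_\mu(x)=0$, and then invokes the invertibility of the full matrix $A$ together with universality of each scalar Gaussian kernel. You instead stratify each $F_i$ by the values of $f_\mu=\gamma_{\mu,\mu}/2$, observe that on each level set $F_{i,t}$ the principal submatrix $C_{F_{i,t}}=(2\phi_t)^{m/2}A_{F_{i,t}}$ is positive definite, and peel off the smallest level via the Gaussian-decay bound $|\widehat{\lambda_\mu}(\omega)|\le C e^{-(\phi_2-\phi_1)\|\omega\|^2/4}$; the Paley--Wiener step you use is the different (and self-contained) fact that a compactly supported measure whose Fourier transform decays like a Gaussian has an entire density and hence vanishes. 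Your route avoids the ``assume WLOG all $\lambda_\mu\neq 0$'' reduction and never needs the full invertibility of $A$, only that of its diagonal blocks $A_{F_{i,t}}$; the paper's route is shorter once that reduction is accepted and makes the role of $A$ itself more transparent.
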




We conclude this subsection with  an interesting interaction between the RKHS of different Gaussian kernels. Define 
$$
H_{G_{\sigma}}:= span\{e^{-\sigma\| x- \cdot\|^{2}}, x \in \mathbb{R}^{m}\} \subset C_{0}(\mathbb{R}^{m}),
$$
$$
H^{c}_{G_{\sigma}}:= \{ y \in \mathbb{R}^{m} \to \int_{\mathbb{R}^{m}}e^{-\sigma\| x- y\|^{2}}d\lambda(x) \in \mathbb{R},  \quad \lambda \in \mathcal{M}_{c}(\mathbb{R}^{m}) \}\subset C_{0}(\mathbb{R}^{m}).
$$ 

\begin{lem}\label{interaction} Let $0 < \sigma_{1}< \ldots< \sigma_{\ell}$, then  
$$
H_{G_{\sigma_{1}}}+ \ldots + H_{G_{\sigma_{\ell}}} \quad \text{ and } \quad 
H^{c}_{G_{\sigma_{1}}}+ \ldots + H^{c}_{G_{\sigma_{\ell}}}
$$ 
are  direct sums.
\end{lem}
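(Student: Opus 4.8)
The plan is to treat the ``continuous'' family $H^c_{G_{\sigma_1}},\dots,H^c_{G_{\sigma_\ell}}$ first; since $\mathcal{M}_\delta(\mathbb{R}^m)\subset\mathcal{M}_c(\mathbb{R}^m)$ we have $H_{G_\sigma}\subset H^c_{G_\sigma}$, so the claim for the $H_{G_\sigma}$'s will follow from the claim for the $H^c_{G_\sigma}$'s. Thus it suffices to show: if $f_1+\dots+f_\ell=0$ with $f_j(y)=\int_{\mathbb{R}^m}e^{-\sigma_j\|x-y\|^2}\,d\lambda_j(x)$, $\lambda_j\in\mathcal{M}_c(\mathbb{R}^m)$, then every $f_j\equiv0$. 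The starting point is that each $f_j$ extends to an entire function on $\mathbb{C}^m$ via $f_j(z)=\int_{\mathbb{R}^m}e^{-\sigma_j\sum_{k=1}^m(z_k-x_k)^2}\,d\lambda_j(x)$: the integrand is entire in $z$, and since $\mathrm{supp}\,\lambda_j$ is compact the usual differentiation-under-the-integral argument applies. Consequently the identity $\sum_j f_j=0$, valid on $\mathbb{R}^m$, persists on all of $\mathbb{C}^m$ by the identity theorem.

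Next I would restrict the identity to the purely imaginary complex lines. Fix $\omega\in S^{m-1}$ and set $z=is\omega$, $s\in\mathbb{C}$. Since $\sum_k(is\omega_k-x_k)^2=\|x\|^2-2is(x\cdot\omega)-s^2$, one gets $f_j(is\omega)=e^{\sigma_j s^2}H_j^\omega(s)$ with $H_j^\omega(s):=\int_{\mathbb{R}^m}e^{-\sigma_j\|x\|^2}e^{2i\sigma_j s(x\cdot\omega)}\,d\lambda_j(x)$, which is entire of exponential type in $s$ and, crucially, bounded on $\mathbb{R}$ (indeed $|H_j^\omega(s)|\le|\lambda_j|(\mathbb{R}^m)\,e^{2\sigma_j\rho_j|\mathrm{Im}\,s|}$ with $\rho_j:=\sup\{|x|:x\in\mathrm{supp}\,\lambda_j\}<\infty$). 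The relation becomes $\sum_{j=1}^\ell e^{\sigma_j s^2}H_j^\omega(s)=0$. I would then isolate the top scale: dividing by $e^{\sigma_\ell s^2}$ gives $H_\ell^\omega(s)=-\sum_{j<\ell}e^{(\sigma_j-\sigma_\ell)s^2}H_j^\omega(s)$, and for real $s$ the right-hand side is $O(e^{-cs^2})$ with $c:=\sigma_\ell-\sigma_{\ell-1}>0$, since each $H_j^\omega$ is bounded on $\mathbb{R}$ and $\sigma_j-\sigma_\ell\le-c$ for $j<\ell$. Hence $H_\ell^\omega$ is entire of exponential type and decays like $e^{-cs^2}$ (in particular faster than $e^{-|s|}$) along the real axis.

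The crux is the rigidity fact that such a function must vanish identically. I would establish it directly: the exponential decay of $H_\ell^\omega$ on $\mathbb{R}$ makes the inverse transform $u\mapsto\frac1{2\pi}\int_{\mathbb{R}}H_\ell^\omega(s)e^{ius}\,ds$ holomorphic on a horizontal strip $|\mathrm{Im}\,u|<c'$; on the other hand $H_\ell^\omega|_{\mathbb{R}}\in L^2(\mathbb{R})$ extends to an entire function of exponential type, so by the classical Paley--Wiener theorem this inverse transform is compactly supported as an $L^2$ function; a function holomorphic on a strip around $\mathbb{R}$ that vanishes on an open half-line is identically zero, whence $H_\ell^\omega\equiv0$ by Fourier inversion. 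In measure terms, $H_\ell^\omega\equiv0$ says the push-forward of $e^{-\sigma_\ell\|x\|^2}\,d\lambda_\ell(x)$ under $x\mapsto x\cdot\omega$ has vanishing Fourier transform; letting $\omega$ range over $S^{m-1}$ forces $\widehat{e^{-\sigma_\ell\|\cdot\|^2}\lambda_\ell}\equiv0$ on $\mathbb{R}^m$, hence $\lambda_\ell=0$ and $f_\ell\equiv0$. Finally I would induct: with $f_\ell\equiv0$ the relation becomes $\sum_{j=1}^{\ell-1}f_j=0$, which has exactly the same shape with one fewer Gaussian scale, so repeating the argument yields $f_{\ell-1}\equiv0$, and so on down to $f_1\equiv0$. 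The main obstacle is packaging the rigidity step cleanly; the entire extension, the exponential-type estimates, and the inductive peeling are routine.
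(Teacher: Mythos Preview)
Your argument is correct and takes a genuinely different route from the paper's. The paper works entirely on the real Fourier side: from $\sum_\mu \int e^{-\|x-y\|^2/\varsigma_\mu}\,d\lambda_\mu(x)=0$ it deduces $\sum_\mu \varsigma_\mu^{m/2}e^{-\varsigma_\mu\|\xi\|^2/4}\widehat{\lambda_\mu}(\xi)=0$, solves for the term with the smallest $\varsigma$ (equivalently the largest $\sigma$), and observes that $\widehat{\lambda_1}$ is then a Schwartz function, forcing $\lambda_1\in C_c^\infty(\mathbb{R}^m)$. Inverting, $\lambda_1$ is recognised as an element of a Gaussian RKHS $\mathcal{H}_{G_{1/(\varsigma_2-\varsigma_1)}}$, and the paper invokes Minh's explicit description of that space (functions $f$ with $e^{\sigma\|y\|^2}f(y)$ admitting a power-series expansion on $\mathbb{R}^m$) to conclude that a compactly supported element must vanish; induction finishes. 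Your approach instead complexifies the spatial variable, restricts to purely imaginary rays $z=is\omega$, and reduces to a one-dimensional Paley--Wiener rigidity: the top-scale coefficient $H_\ell^\omega$ is entire of exponential type, lies in $L^2(\mathbb{R})$ by the Gaussian decay forced on it, so its inverse transform is compactly supported, yet that inverse transform is also analytic on a strip (in fact entire) by the same decay, hence zero. Both proofs peel off one scale at a time and both hinge on an ``analytic with compact support implies zero'' step; the paper reaches it through an external RKHS characterisation, whereas your route is self-contained and more elementary, at the modest cost of slicing along directions $\omega\in S^{m-1}$ and reassembling afterwards.
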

Unfortunately, we do not know if Lemma \ref{interaction} is valid on an  infinite dimensional Hilbert space or when $\|x-y\|^{2}$ is replaced by a conditionally negative definite kernel $\gamma$.

 By the arguments presented at the proof of Theorem \ref{secondCaponnetogeneralization1M22}, the same relation does not occur for the space 
$$
H^{1}_{G_{\sigma}}:= \{y \in \mathbb{R}^{m} \to \int_{\mathbb{R}^{m}}e^{-\sigma\| x- y\|^{2}}d\lambda(x) \in \mathbb{R}, \quad \lambda \in \mathcal{M}(\mathbb{R}^{m}) \} \subset C_{0}(\mathbb{R}^{m}).
$$
Note that $H_{G_{\sigma}} \subset H^{c}_{G_{\sigma}} \subset H^{1}_{G_{\sigma}} \subset  \mathcal{H}_{G_{\sigma}}$.

\subsection{Matern family}

Another important set  of kernels on geostatistics is the Matern family 
$$
(x,y) \in \mathbb{R}^{m}\times \mathbb{R}^{m} \to \mathscr{M}(\|x-y\|; \alpha, \nu):= \int_{(0, \infty)}e^{-\|x-y\|^{2}t} \left (  \left(\frac{\alpha^{2}}{4}\right )^{\nu}\frac{t^{-1-\nu}}{\Gamma(\nu)}e^{-\alpha^{2}/4t}\right )dt
$$
which are positive definite for every $m \in \mathbb{N}$, $\mathscr{M}(0; \alpha, \nu)=1$ and also satisfies the equality $\mathcal{M}(\|x\|; \alpha , \nu)=2^{1-\nu}(\|x\|\alpha)^{\nu}\mathcal{K}_{\nu}(\|x\|\alpha)/\Gamma(\nu) $, where $\mathcal{K}_{\nu}$ denotes the modified Bessel
function of the second kind of order $\nu$ \cite{NIST:DLMF}.  A matrix valued version of this family was proposed in  \cite{Gneitingmatern2010} and later was  generalized in \cite{Porcumaterngaussfamily} as the family of matrix valued kernels $C_{i,j}^{\mathscr{M}, \psi}: (\mathbb{R}^{m^{\prime}} \times \mathbb{R}^{m} )^{2}  \to M_{\ell}(\mathbb{C})$ given by 
$$
C_{i,j}^{\mathscr{M}, \psi}((u,x),(v,y)):=c_{i,j}\frac{1}{\psi(\|u-v\|^{2})^{m/2}}\mathscr{M}\left (\frac{\|x-y\|}{\psi(\|u-v\|^{2})^{1/2}} ;\alpha_{i,j}, \nu_{i,j}\right ),
$$
 where the function $\psi:[0, \infty) \to \mathbb{R}$ is a positive Bernstein function, $\alpha_{i,j}= ((\alpha_{i}^{2}+\alpha_{j}^{2})/2)^{1/2}$, $\nu_{i,j}= \nu_{i} +\nu_{j}$, with $\alpha_{i},\nu_{i} \in (0, \infty)$ and the matrix
$$
\left [c_{i,j}\frac{2^{-\nu_{i}}\Gamma(2\nu_{i})^{1/2}}{\alpha_{i}^{\nu_{i}}}\frac{2^{ - \nu_{j}}\Gamma(2\nu_{j})^{1/2}}{ \alpha_{j}^{\nu_{j}}}\frac{(\alpha_{i}+\alpha_{j})^{\nu_{i}+ \nu_{j}}}{\Gamma(\nu_{i}+\nu_{j})} \right]_{i,j=1}^{\ell}
$$
 is assumed to be  positive semidefinite.

Similar to the definition of $G_{A, \gamma}$, let $X$ be a Hausdorff space, $\gamma: X \times X \to (0, \infty)$ be a continuous conditionally negative definite kernel and $A: X \times X \to M_{\ell}(\mathbb{C})$ be a continuous matrix valued kernel. Suppose that the matrix valued kernel  $C: X \times X \to M_{\ell}(\mathbb{C})$ defined as 
$$
 C_{i,j}(u,v):= A_{i,j}(u,v)\gamma^{m/2}(u,v)\frac{2^{-\nu_{i}}\Gamma(2\nu_{i})^{1/2}}{\alpha_{i}^{\nu_{i}}}\frac{2^{ - \nu_{j}}\Gamma(2\nu_{j})^{1/2}}{ \alpha_{j}^{\nu_{j}}}\frac{(\alpha_{i}+\alpha_{j})^{\nu_{i}+ \nu_{j}}}{\Gamma(\nu_{i}+\nu_{j})}
$$
is positive definite. Under these hypothesis we define the kernel $C^{A, \gamma}: ( X \times \mathbb{R}^{m})^{2}  \to M_{\ell}(\mathbb{C})$, by 
$$
C_{i,j}^{A, \gamma}((u,x),(v,y)):=A_{i,j}(u,v)\mathscr{M}\left (\frac{\|x-y\|}{\gamma(u,v)^{1/2}} ;\alpha_{i,j}, \nu_{i,j}\right ),
$$
 where  $\alpha_{i,j}= ((\alpha_{i}^{2}+\alpha_{j}^{2})/2)^{1/2}$, $\nu_{i,j}= \nu_{i} +\nu_{j}$, with $\alpha_{i},\nu_{i} \in (0, \infty)$.

\begin{thm}\label{genmaternuniversal} The matrix valued kernel $C^{A, \gamma}$ is positive definite and continuous.\\
If  $\gamma$ is a metrizable kernel, then $C^{A, \gamma}$ is  SPD (universal) if and only if $A_{i,i}(u,u)>0$ for every $1\leq i \leq \ell$, $u \in X$ and $\{(i,j), \quad (\alpha_{i}, \nu_{i} )= ( \alpha_{j}, \nu_{j})\}= \{(i,i), \quad 1\leq i \leq \ell \}$.   
\end{thm}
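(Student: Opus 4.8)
The plan is to reduce the Matérn kernel to the Gaussian kernel via the scale-mixture representation, then apply the machinery of Section \ref{Gaussian kernel on Hilbert spaces} together with the Hadamard-product analysis. First I would observe that, by the defining integral for $\mathscr{M}$,
$$
\mathscr{M}\!\left(\frac{\|x-y\|}{\gamma(u,v)^{1/2}};\alpha_{i,j},\nu_{i,j}\right)
= \int_{(0,\infty)} e^{-\|x-y\|^{2} t/\gamma(u,v)} \,\left(\left(\tfrac{\alpha_{i,j}^{2}}{4}\right)^{\nu_{i,j}}\frac{t^{-1-\nu_{i,j}}}{\Gamma(\nu_{i,j})}e^{-\alpha_{i,j}^{2}/4t}\right)dt,
$$
so that $C^{A,\gamma}_{i,j}$ is a mixture of the rescaled Gaussian kernels $G_{t,\gamma}((u,x),(v,y)) := e^{-\|x-y\|^{2}t/\gamma(u,v)}$ against a positive weight depending on $(i,j,t)$. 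Positive definiteness and continuity follow because each scalar building block $A_{i,j}(u,v)e^{-\|x-y\|^{2}t/\gamma(u,v)}$ is, by the construction used for $G_{A,\gamma}$ in Theorem \ref{genGaussianuniversal} (applied with the rescaled kernel $\gamma/t$, whose associated matrix $C$ is $t^{-m/2}$ times the original, still positive semidefinite), a positive definite kernel, and the weight matrix $[(\alpha_{i}+\alpha_{j})^{\nu_i+\nu_j}/\Gamma(\nu_i+\nu_j)\cdots]$ absorbed into the hypothesis on $C$ keeps the mixed matrix positive semidefinite; a Tonelli argument gives continuity and finiteness.

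Next I would handle the equivalence. For the ``if'' direction, suppose $A_{i,i}(u,u)>0$ for all $i,u$ and the index condition holds: the diagonal parameter pairs $(\alpha_i,\nu_i)$ are pairwise distinct, so the only indices $(i,j)$ with $(\alpha_i,\nu_i)=(\alpha_j,\nu_j)$ are $i=j$. The idea is to test $\sum_{i}\int f_i \,d\lambda_i = 0$ against the RKHS, or equivalently to show the only discrete measure $\Lambda$ on $(X\times\mathbb{R}^m)\times\{1,\dots,\ell\}$ annihilating $C^{A,\gamma}$ is zero. Writing out the double integral and using the Gaussian mixture, one gets an integral over $t\in(0,\infty)$ of a sum of quadratic forms in the Gaussians $e^{-\|x-y\|^2 t/\gamma(u,v)}$; by Lemma \ref{interaction}-type independence of Gaussian RKHS at distinct scales (here the relevant distinct scales come from the fact that for $i\ne j$ the mixing densities in $t$ have different decay governed by $\nu_{i,j}$ and $\alpha_{i,j}$, which differ since $(\alpha_i,\nu_i)\ne(\alpha_j,\nu_j)$) the cross terms cannot cancel the diagonal ones, reducing to $\ell$ independent conditions, each of the form ``$A_{i,i}$-weighted Gaussian kernel on $X\times\mathbb{R}^m$ is SPD/ISPD,'' handled by Theorem \ref{genGaussianuniversal} / Theorem \ref{genGaussianintegralllymetric}. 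For the ``only if'' direction: if some $A_{i,i}(u,u)=0$ then the diagonal block already fails to be SPD (as in $G_{A,\gamma}$); and if the index condition fails, i.e. $(\alpha_i,\nu_i)=(\alpha_j,\nu_j)$ for some $i\ne j$, then rows $i$ and $j$ of the kernel are genuinely proportional (same $\mathscr{M}$ factor), so one can build a nonzero measure supported on those two indices annihilating $C^{A,\gamma}$, exactly as in the matrix-valued obstruction in the proof of Theorem \ref{secondCaponnetogeneralization1M22}.

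The main obstacle I anticipate is making rigorous the ``independence across the continuum of scales'' step: Lemma \ref{interaction} is stated only for finitely many distinct scales $\sigma_1<\dots<\sigma_\ell$ on $\mathbb{R}^m$, and here I need an analogue that separates the contributions indexed by $i$ when the mixing measures in the variable $t$ are mutually singular-enough in their tails — across a product space $X\times\mathbb{R}^m$ and for possibly infinite-dimensional $\gamma$. The cleanest route is probably to avoid a scale-by-scale decomposition and instead argue directly: fix the $X$-part and, for each fixed $(u,v)$, expand in $\|x-y\|^2$; uniqueness of the representing measures (Bernstein/Hausdorff moment uniqueness, or injectivity of the Laplace transform in $t$) forces the weight functions $w_{i}(t)=(\alpha_{i,j}^2/4)^{\nu_{i,j}}t^{-1-\nu_{i,j}}e^{-\alpha_{i,j}^2/4t}/\Gamma(\nu_{i,j})$ with distinct $(\alpha_i,\nu_i)$ to be linearly independent as functions of $t$, which decouples the system. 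I would then feed each decoupled scalar condition into the already-established Theorems \ref{genGaussianuniversal} and \ref{genGaussianintegralllymetric}, and invoke Proposition \ref{rkhscontained} for the $C_0$-inclusion statement; assembling these pieces and checking the measure-theoretic interchanges (Fubini/Tonelli with the Radon measure $\lambda$ of possibly non-compact support) is the remaining bookkeeping.
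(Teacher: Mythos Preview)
Your plan has the right broad outline---the integral representation of $\mathscr{M}$ and the reduction to Gaussian-type building blocks---but there is a genuine gap in the ``if'' direction, and the ``main obstacle'' you identify is not the actual obstacle.

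The key structural fact you are missing is that the mixing density \emph{factors}: after absorbing the normalising constants into $C$, one has
\[
C^{A,\gamma}_{i,j}((u,x),(v,y)) \;=\; C_{i,j}(u,v)\,\frac{1}{\gamma^{m/2}(u,v)}\int_{0}^{\infty} e^{-t\|x-y\|^{2}/\gamma(u,v)}\,m_{i}(t)\,m_{j}(t)\,dt,
\]
with $m_{i}(t)=\dfrac{\alpha_{i}^{2\nu_{i}}}{2^{2\nu_{i}}\Gamma(2\nu_{i})^{1/2}}\,t^{-\nu_{i}-1/2}e^{-\alpha_{i}^{2}/8t}$. Your ``$w_{i}(t)$'' is written with an $(i,j)$-dependent formula; without the rank-one factorisation $m_{i,j}=m_{i}m_{j}$ you cannot reduce to the linear independence of $\ell$ functions of $t$, and the $\ell^{2}$ weights remain tangled with the $A_{i,j}$.

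With the factorisation, the paper's argument runs entirely through the product lemmas of Subsection~\ref{Products of positive definite kernels}, not through any scale-separation. Write $C^{A,\gamma}=C\odot K$ with $K_{i,j}=\gamma^{-m/2}\int_{0}^{\infty} e^{-t\|x-y\|^{2}/\gamma}\,m_{i}(t)m_{j}(t)\,dt$. The scalar kernel $\gamma(u,v)^{-m/2}e^{-t\|x-y\|^{2}/\gamma(u,v)}$ is universal on $X\times\mathbb{R}^{m}$ for each fixed $t>0$ by Theorem~\ref{genGaussianuniversal}. Then Lemma~\ref{Hadamardintegral} together with Lemma~\ref{Kroengen1} reduces universality of the matrix-valued kernel $K$ to positive definiteness of the single $\ell\times\ell$ Gram matrix $\bigl[\int_{0}^{\infty}m_{i}(t)m_{j}(t)\,dt\bigr]_{i,j}$, which holds exactly when the $m_{i}$ are linearly independent, i.e.\ when the pairs $(\alpha_{i},\nu_{i})$ are distinct (Lemma~\ref{gammakernel}). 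Finally Lemma~\ref{Schurmulgen1} gives that $C\odot K$ is universal iff $C_{i,i}(u,u)>0$, equivalently $A_{i,i}(u,u)>0$.

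In particular, Lemma~\ref{interaction} is not used at all, and there is no ``continuum of scales'' difficulty: the $t$-integral is dispatched in one step by Lemma~\ref{Hadamardintegral}, and the decoupling occurs at the finite index set $\{1,\dots,\ell\}$ via the Gram matrix of the $m_{i}$, not by breaking the $X\times\mathbb{R}^{m}$ problem into $\ell$ scalar problems as you propose. Your ``only if'' direction is essentially the paper's.
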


\begin{thm}\label{genmaternc0universal}
If the kernel $\gamma$ is metrizable, the function $x \in X \to \gamma(x,x) \in \mathbb{R}$ is bounded from below and $C$ is bounded, then the matrix valued kernel $C^{A, \gamma}$
 is ISPD if and only if  $A_{i,i}(u,u)>0$ for every $u \in X$, $1\leq i \leq \ell$ and $\{(i,j), \quad (\alpha_{i}, \nu_{i} )= ( \alpha_{j}, \nu_{j})\}= \{(i,i), \quad 1\leq i \leq \ell \}$.   .\\
If $X$ is a locally compact space, the inclusion $\mathcal{H}_{C^{A, \gamma}} \subset C_{0}(X \times \mathbb{R}^{m}, \mathbb{C}^{\ell})$ occurs if and only if $\mathcal{H}_{A}\subset C_{0}(X, \mathbb{C}^{\ell})$. \end{thm}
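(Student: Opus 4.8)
\emph{Strategy.} The plan is to run the proof of Theorem~\ref{genGaussianintegralllymetric} with the Gaussian factor replaced by the Mat\'ern one, using the Bochner representation of the Mat\'ern function in the $\mathbb{R}^{m}$--variable. Writing $\mathscr{M}(\|z\|;\alpha,\nu)=N(\alpha,\nu)\int_{\mathbb{R}^{m}}(\alpha^{2}+\|\xi\|^{2})^{-\nu-m/2}e^{\mathrm{i}\langle\xi,z\rangle}\,d\xi$ for the normalising constant $N(\alpha,\nu)$ fixed by $\mathscr{M}(0;\alpha,\nu)=1$, and rescaling $\xi$, one obtains
\[
C^{A,\gamma}_{i,j}((u,x),(v,y))=\int_{\mathbb{R}^{m}}B^{\xi}_{i,j}(u,v)\,e^{\mathrm{i}\langle\xi,x-y\rangle}\,d\xi,\quad B^{\xi}_{i,j}(u,v):=N(\alpha_{i,j},\nu_{i,j})A_{i,j}(u,v)\gamma(u,v)^{m/2}(\alpha_{i,j}^{2}+\gamma(u,v)\|\xi\|^{2})^{-\nu_{i,j}-m/2}.
\]
Because $C^{A,\gamma}$ is translation invariant in the $\mathbb{R}^{m}$--component, uniqueness in the matrix valued Bochner theorem converts the assumed positive definiteness of $C^{A,\gamma}$ into: for every $\xi$ the matrix valued kernel $B^{\xi}$ on $X$ is positive definite (this is the same remark used to see that the hypothesis on $C$ cannot be dropped). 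Expanding $\langle\lambda,\lambda\rangle$ for a measure $\lambda=\sum_{i}\lambda_{i}\otimes\delta_{i}$ on $(X\times\mathbb{R}^{m})\times\{1,\dots,\ell\}$ and applying Fubini (legitimate since $C$ is bounded) yields
\[
\langle\lambda,\lambda\rangle=\int_{\mathbb{R}^{m}}\Big(\sum_{i,j}\int_{X}\!\!\int_{X}B^{\xi}_{i,j}(u,v)\,d\sigma^{\xi}_{i}(u)\,d\overline{\sigma^{\xi}_{j}}(v)\Big)\,d\xi ,
\]
where $\sigma^{\xi}_{i}$ is the pushforward to $X$ of the complex measure $e^{\mathrm{i}\langle\xi,x\rangle}\,d\lambda_{i}((u,x))$, each inner term being $\ge0$.

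\emph{ISPD.} Necessity is immediate: ISPD implies SPD, so Theorem~\ref{genmaternuniversal} forces $A_{i,i}(u,u)>0$ for all $u,i$ and $\{(i,j):(\alpha_{i},\nu_{i})=(\alpha_{j},\nu_{j})\}=\{(i,i):1\le i\le\ell\}$. For sufficiency, assume the standing hypotheses and the two conditions, and suppose $\langle\lambda,\lambda\rangle=0$; then the $\xi$--integrand above vanishes for a.e.\ $\xi$, so it is enough to show that $B^{\xi}$ is ISPD on $X$ (as a matrix valued kernel) for a.e.\ $\xi$, since then $\sigma^{\xi}_{i}=0$ for a.e.\ $\xi$ and all $i$, forcing $\lambda=0$. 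The diagonal entries satisfy $B^{\xi}_{i,i}(u,v)=N(\alpha_{i},2\nu_{i})\,C_{i,i}(u,v)\,(\alpha_{i}^{2}+\gamma(u,v)\|\xi\|^{2})^{-2\nu_{i}-m/2}$; for $\xi\neq0$ the last factor is a non-constant completely monotone function of the metrizable kernel $\gamma$ (the composition of the completely monotone $r\mapsto r^{-2\nu_{i}-m/2}$ with the Bernstein function $t\mapsto\alpha_{i}^{2}+t\|\xi\|^{2}$), hence ISPD by Theorem~\ref{gaussispd}, and $C_{i,i}$ is positive definite with $C_{i,i}(u,u)>0$ (from $A_{i,i}(u,u)>0$ and $\gamma>0$), so their Schur product $B^{\xi}_{i,i}$ is ISPD by the results of Subsection~\ref{Products of positive definite kernels}. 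It then remains to exclude cancellations across the off--diagonal blocks; this is where the distinctness of the pairs $(\alpha_{i},\nu_{i})$ enters, playing the role of Lemma~\ref{interaction} in the Gaussian setting: distinct $(\alpha_{i},\nu_{i})$ produce linearly independent spectral profiles, and an asymptotic analysis of $B^{\xi}$ as $\|\xi\|\to\infty$ separates the components by $(\alpha,\nu)$--value and reduces $B^{\xi}$--ISPD to the ISPD of the diagonal blocks already treated.

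\emph{The $C_{0}$--inclusion.} Since $0\le\mathscr{M}\le1$ and $\mathscr{M}(0;\alpha,\nu)=1$, the scalarisation of $C^{A,\gamma}$ has diagonal $((u,x),i)\mapsto A_{i,i}(u,u)$, which is bounded (from $C$ bounded and $x\mapsto\gamma(x,x)$ bounded from below), while its section at $((v,y),j)$ is $((u,x),i)\mapsto A_{i,j}(u,v)\,\mathscr{M}(\|x-y\|/\gamma(u,v)^{1/2};\alpha_{i,j},\nu_{i,j})$: as $u\to\infty$ it is dominated by $|A_{i,j}(u,v)|$, and as $\|x\|\to\infty$ with $u$ in a compact set it is dominated by $\mathscr{M}(s;\alpha_{i,j},\nu_{i,j})\to0$ (using local boundedness of $\gamma$ and boundedness of $A$, the latter being positive definite with bounded diagonal). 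By Proposition~\ref{rkhscontained}(ii) this section lies in $C_{0}(X\times\mathbb{R}^{m})$ exactly when $u\mapsto A_{i,j}(u,v)\in C_{0}(X)$, i.e.\ $\mathcal{H}_{A}\subset C_{0}(X,\mathbb{C}^{\ell})$; conversely, restricting $C^{A,\gamma}$ to the closed set $X\times\{0\}$ recovers $A$ (as $\mathscr{M}(0;\cdot,\cdot)=1$), so $\mathcal{H}_{C^{A,\gamma}}\subset C_{0}(X\times\mathbb{R}^{m},\mathbb{C}^{\ell})$ forces $\mathcal{H}_{A}\subset C_{0}(X,\mathbb{C}^{\ell})$. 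Two applications of Proposition~\ref{rkhscontained}(ii) give the equivalence.

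\emph{Main obstacle.} I expect the genuine work to be the off--diagonal separation in the ISPD sufficiency: showing that, when the $(\alpha_{i},\nu_{i})$ are pairwise distinct, no nonzero measure lies in the null space of $B^{\xi}$ for a positive measure set of $\xi$. This is the Mat\'ern (and infinite--dimensional $X$) analogue of Lemma~\ref{interaction}, whose Gaussian prototype is known only for finitely many pure Gaussians on a finite--dimensional space; the delicate point is disentangling the interaction between the (possibly infinite--dimensional) variable $u$, which carries $\gamma$ and $A$, and the finite--dimensional spatial variable, which carries the Mat\'ern profiles $(\alpha_{i,j}^{2}+\gamma(u,v)\|\xi\|^{2})^{-\nu_{i,j}-m/2}$ --- presumably via an expansion of these weights for large $\|\xi\|$ that peels off one $(\alpha,\nu)$--value at a time, each peeled term being annihilated by the ISPD of the corresponding single--component Schoenberg--Gaussian kernel on $X$.
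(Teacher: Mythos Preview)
Your treatment of the $C_{0}$--inclusion is essentially the paper's argument: restrict to $x=y=0$ to recover $A$ for one direction, and for the other direction use that on $\mathcal{C}_{v,\epsilon}$ the quantity $\gamma(u,v)^{-1/2}$ is bounded below so $\mathscr{M}(\|x-y\|/\gamma(u,v)^{1/2};\alpha_{i,j},\nu_{i,j})$ is dominated by a fixed $C_{0}(\mathbb{R}^{m})$ function of $\|x-y\|$. The necessity of the two conditions for ISPD via Theorem~\ref{genmaternuniversal} is also fine.

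The genuine gap is exactly the one you flag as the ``main obstacle'': in the Fourier route there is no clean factorisation of $B^{\xi}_{i,j}$ across the matrix index, so you are forced into an off--diagonal separation argument that you do not carry out. Your suggested large--$\|\xi\|$ peeling is problematic on two counts. First, the leading behaviour of $(\alpha_{i,j}^{2}+\gamma\|\xi\|^{2})^{-\nu_{i,j}-m/2}$ depends on $\nu_{i}+\nu_{j}$, so entries with equal $\nu_{i}+\nu_{j}$ but different $(\alpha_{i},\alpha_{j})$ are not separated at leading order. Second, the measures $\sigma^{\xi}_{i}$ themselves vary with $\xi$, so an asymptotic argument in $\xi$ does not isolate a single $\xi$ at which $B^{\xi}$--ISPD would finish the job; you would need a uniform statement that is not available. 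In short, showing that each diagonal block $B^{\xi}_{i,i}$ is ISPD is correct but insufficient, and the step from there to ISPD of the full matrix--valued $B^{\xi}$ is precisely what is missing.

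The paper avoids this difficulty by using the other integral representation of the Mat\'ern function, the scale--mixture of Gaussians
\[
\mathscr{M}(\|z\|;\alpha,\nu)=\int_{(0,\infty)}e^{-\|z\|^{2}t}\Big(\tfrac{\alpha^{2}}{4}\Big)^{\nu}\tfrac{t^{-1-\nu}}{\Gamma(\nu)}e^{-\alpha^{2}/4t}\,dt,
\]
which after absorbing the normalising constants into $C_{i,j}(u,v)$ gives
\[
C^{A,\gamma}_{i,j}((u,x),(v,y))=C_{i,j}(u,v)\,\gamma(u,v)^{-m/2}\int_{(0,\infty)}e^{-\|x-y\|^{2}t/\gamma(u,v)}\,m_{i}(t)m_{j}(t)\,dt,
\]
with $m_{i}(t)=c_{i}\,t^{-\nu_{i}-1/2}e^{-\alpha_{i}^{2}/8t}$. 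The crucial point is that the mixing weight factors as $m_{i}(t)m_{j}(t)$: this turns the $\ell\times\ell$ matrix structure into a rank--one object at each $t$. Since the scalar kernel $\gamma(u,v)^{-m/2}e^{-\|x-y\|^{2}t/\gamma(u,v)}$ is ISPD on $X\times\mathbb{R}^{m}$ for every $t>0$ (Theorem~\ref{genGaussianintegralllymetric}), Lemmas~\ref{Hadamardintegral} and~\ref{Kroengen1} reduce ISPD of the integral kernel to positive definiteness of the single Gram matrix $\bigl[\int_{0}^{\infty}m_{i}(t)m_{j}(t)\,dt\bigr]_{i,j}$, i.e.\ to linear independence of the functions $m_{i}$, which by Lemma~\ref{gammakernel} holds exactly when the pairs $(\alpha_{i},\nu_{i})$ are distinct. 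A final Schur product with $C$ (Lemma~\ref{Schurmulgen1}) then uses $A_{i,i}(u,u)>0$. The moral: switch from the Fourier representation in $\xi$ to the Laplace/scale--mixture representation in $t$, where the specific choice $\alpha_{i,j}^{2}=(\alpha_{i}^{2}+\alpha_{j}^{2})/2$, $\nu_{i,j}=\nu_{i}+\nu_{j}$ is engineered precisely so that the weight factors across $i$ and $j$.
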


The  definition of the matrix valued kernels $C^{A, \gamma}$ is inspired on the Gneiting class, which turns out to be  well defined only on  Euclidean spaces of a bounded dimension. Being so, this definition does not take advantage that the Matern family is positive definite on all Euclidean spaces. 

To surpass this problem, we define the matrix valued kernel $\mathscr{M}_{A,\gamma} :(  X \times \mathcal{H})\times (X \times \mathcal{H}) \to  M_{\ell}(\mathbb{C})$ as
$$
[\mathscr{M}_{A,\gamma}((x,u),(y,v))]_{i,j}:=A_{i,j}(u,v)\mathscr{M}(\|x-y\|;\gamma(u,v)^{1/2}, \nu_{i} + \nu_{j})  
$$
where $\gamma:X \times X \to (0, \infty)$ is a continuous conditionally negative definite kernel,  $A: X \times X \to M_{\ell}(\mathbb{C})$ is a continuous matrix valued kernel, $\nu_{i}>0$ for every $i$ and under the restriction that the matrix valued kernel 
$C: X \times X \to M_{\ell}(\mathbb{C})$ defined as 
$$
 C_{i,j}(u,v):= A_{i,j}(u,v)\frac{\gamma(u,v)^{\nu_{i} + \nu_{j}}}{\Gamma(\nu_{i} + \nu_{j})}
$$
is positive definite.

\begin{thm}\label{genmaterninfinitouniversal} The following properties  holds. 
\begin{enumerate}
\item[(i)] The matrix valued kernel $\mathscr{M}_{A,\gamma}$ is positive definite and continuous.
\item[(ii)] If  $\gamma$ is a metrizable kernel, then $[\mathscr{M}_{A,\gamma}]_{i,j=1}^{\ell}$ is  SPD (universal) if and only if the positive numbers $\nu_{i}$ are distinct and  $A_{i,i}(u,u)>0$ for every $1\leq i \leq \ell$ and $u \in X$.
\item[(iii)] If the kernel $\gamma$ is metrizable, the function $x \in X \to \gamma(x,x) \in \mathbb{R}$ is bounded from below and $C$ is bounded, then the the matrix valued kernel $\mathscr{M}_{A,\gamma}$ is ISPD  if and only if the positive numbers $\nu_{i}$ are distinct and  $A_{i,i}(u,u)>0$ for every $1\leq i \leq \ell$ and $u \in X$.
\item[(iv)]When $X$  is a locally compact space and $\mathcal{H}= \mathbb{R}^{m}$,  the inclusion $\mathcal{H}_{\mathscr{M}_{A,\gamma}} \subset C_{0}(X\times \mathbb{R}^{m}, \mathbb{C}^{\ell}  )$ occurs if and only if $\mathcal{H}_{A} \subset C_{0}(X, \mathbb{C}^{\ell} )$. 
\end{enumerate}
\end{thm}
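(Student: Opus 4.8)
The plan is to insert the scale–mixture (Laplace) representation of the Mat\'ern function and reduce everything to the Gaussian kernel on a Hilbert space, using the results of Sections \ref{Gaussian kernel on Hilbert spaces} and \ref{Universality of Schoenberg-Gaussian kernels}, the Schur/tensor product analysis of Subsection \ref{Products of positive definite kernels}, and the method already used for Theorem \ref{genmaternuniversal}. Substituting $s=1/(4t)$ in the defining integral of $\mathscr{M}$ gives, for all $i,j$,
$$
[\mathscr{M}_{A,\gamma}((x,u),(y,v))]_{i,j}=\int_{(0,\infty)}e^{-t\|x-y\|^{2}}\,\Big(C_{i,j}(u,v)e^{-\gamma(u,v)/4t}\Big)(4t)^{-\nu_{i}}(4t)^{-\nu_{j}}\,\frac{dt}{t},
$$
with $C$ the positive definite kernel from the hypothesis. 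For each fixed $t>0$ the bracketed factor is a positive definite matrix valued kernel on $X$ (a Schur product of $C$ with the Schoenberg kernel $e^{-\gamma(\cdot,\cdot)/4t}$, positive definite by \cite{schoenmetric}), the weights $(4t)^{-\nu_i}(4t)^{-\nu_j}$ form a rank one positive semidefinite matrix, and $e^{-t\|x-y\|^{2}}$ is positive definite on $\mathcal H$ by Theorem \ref{Gauszinho}; hence the integrand is positive definite on $(X\times\mathcal H)\times\{1,\dots,\ell\}$ and so is $\mathscr{M}_{A,\gamma}$. Continuity follows from the Bessel function expression for $\mathscr{M}$ together with the bounds $0<\mathscr{M}(r;\alpha,\nu)\le\mathscr{M}(0;\alpha,\nu)=1$ and dominated convergence, which gives (i).

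For (ii) and (iii) I would use the measure criteria of Section \ref{Definitions}: the three properties fail exactly when some nonzero measure $\lambda$ (discrete, compactly supported, or finite Radon, respectively) on $(X\times\mathcal H)\times\{1,\dots,\ell\}$ annihilates the kernel. Since the integrand above is positive definite, Tonelli's theorem turns $\iint\mathscr{M}_{A,\gamma}\,d\lambda\,d\overline{\lambda}=0$ into
$$
\iint e^{-t\|x-y\|^{2}}C_{i,j}(u,v)e^{-\gamma(u,v)/4t}\,d\lambda_{t}((x,u),i)\,d\overline{\lambda_{t}}((y,v),j)=0\quad\text{for every }t>0,
$$
where $\lambda_{t}$ is $\lambda$ rescaled on the $i$-th channel by $(4t)^{-\nu_i}$. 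As $\gamma$ is metrizable, $e^{-\gamma/4t}$ is SPD/universal by Theorem \ref{improvSchoen}, and $e^{-t\|\cdot-\cdot\|^2}$ is SPD/universal/ISPD on $\mathcal H$ by Theorems \ref{Gauszinho} and \ref{Gauszao}; combining these through the product lemmas of Subsection \ref{Products of positive definite kernels} I would strip first the $\mathcal H$ variable and then the $X$ variable from the displayed family of identities, arriving at relations of the form $\sum_k\big(c_k C_{i_k,j}(u',u_k)\big)(4t)^{-\nu_{i_k}}e^{-\gamma(u',u_k)/4t}=0$ for all $t>0$, $u'$, $j$. Writing $s=1/(4t)$, the functions $s\mapsto s^{\nu}e^{-\beta s}$ with distinct pairs $(\nu,\beta)$ are linearly independent, and it is precisely here that distinctness of the $\nu_i$ enters, forcing each $c_k C_{i_k,i_k}(u_k,u_k)=0$; since $C_{i_k,i_k}(u_k,u_k)=A_{i_k,i_k}(u_k,u_k)\gamma(u_k,u_k)^{2\nu_{i_k}}/\Gamma(2\nu_{i_k})>0$ by the hypothesis $A_{i,i}(u,u)>0$, this yields $c_k=0$. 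For ISPD one also uses that $\gamma$ bounded below and $C$ bounded make $\mathscr{M}_{A,\gamma}$ bounded with $\mathcal H_{\mathscr{M}_{A,\gamma}}\subset C(X\times\mathcal H)$, so Definition \ref{intstricdefn} applies. Conversely, if some $A_{i_0,i_0}(u_0,u_0)=0$ then $[A_{i,j}(u_0,u_0)]$ being positive semidefinite forces the whole $i_0$-th row to vanish and $\delta_{((x_0,u_0),i_0)}$ annihilates the kernel; and if $\nu_{i_0}=\nu_{j_0}$ with $i_0\ne j_0$, all Mat\'ern profiles attached to the channels $i_0,j_0$ coincide, so the sub-kernel on $\{i_0,j_0\}$ degenerates exactly as in the proof of Theorem \ref{genmaternuniversal}, producing a nonzero annihilating measure.

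Part (iv) I would treat separately, purely through Proposition \ref{rkhscontained}(ii) and the asymptotics of $\mathscr{M}$. Because $\mathscr{M}(0;\alpha,\nu)=1$, the diagonal of $\mathscr{M}_{A,\gamma}$ is $[A_{i,j}(u,u)]$, so boundedness of the diagonal is equivalent to $u\mapsto A(u,u)$ being bounded; and because $|\mathscr{M}(r;\alpha,\nu)|\le1$ and $\mathscr{M}(r;\alpha,\nu)\to0$ as $r\to\infty$ uniformly for $\alpha$ in compact subsets of $(0,\infty)$ (here $\gamma(\cdot,v_0)$ is bounded above and below on compacta since $\gamma>0$ is continuous), for fixed $(y_0,v_0)$ the section $((x,u),i)\mapsto A_{i,j}(u,v_0)\mathscr{M}(\|x-y_0\|;\gamma(u,v_0)^{1/2},\nu_i+\nu_j)$ lies in $C_0(X\times\mathbb R^m)$ if and only if $u\mapsto A(u,v_0)\in C_0(X)$, the $\mathbb R^m$ direction being automatic since every bounded closed subset of $\mathbb R^m$ is compact. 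Evaluating the section at $x=y_0$ for the reverse implication, Proposition \ref{rkhscontained}(ii) then gives $\mathcal H_{\mathscr{M}_{A,\gamma}}\subset C_0(X\times\mathbb R^m,\mathbb C^\ell)\iff\mathcal H_A\subset C_0(X,\mathbb C^\ell)$.

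The step I expect to be the main obstacle is the converse in (ii)/(iii): making precise, for an arbitrary Hausdorff space $X$ and an arbitrary Hilbert space $\mathcal H$, the degeneracy caused by coinciding smoothness parameters $\nu_i=\nu_j$ (exhibiting the annihilating measures in that case), and, on the sufficiency side, justifying the successive elimination of the $\mathcal H$ and $X$ variables from the $t$-indexed family of identities — i.e.\ correctly interleaving the product lemmas of Subsection \ref{Products of positive definite kernels}, the SPD/ISPD/universality of the Gaussian kernel, and the linear independence of the family $\{s\mapsto s^{\nu}e^{-\beta s}\}$, rather than the (routine) positivity and continuity arguments behind (i) or the RKHS bookkeeping behind (iv).
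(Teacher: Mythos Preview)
Your integral representation and the treatment of (i) and (iv) match the paper. The real divergence is in how you organise the sufficiency argument for (ii) and (iii). The paper first writes $\mathscr{M}_{A,\gamma}=C\odot D$ on $(X\times\mathcal{H})\times\{1,\dots,\ell\}$, with
\[
D_{i,j}((u,x),(v,y))=\int_{0}^{\infty} e^{-t\|x-y\|^{2}}\,e^{-\gamma(u,v)/4t}\,(4t)^{-\nu_i}(4t)^{-\nu_j}\,\frac{dt}{t},
\]
and applies Lemma~\ref{Schurmulgen1} \emph{before} Lemma~\ref{Hadamardintegral}: since $C_{i,i}(u,u)>0$ everywhere, it suffices to prove that $D$ is SPD/universal/ISPD. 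The integrand of $D$ is a genuine tensor product (Gaussian on $\mathcal{H}$) $\otimes$ (Schoenberg kernel on $X$) $\otimes$ (rank-one matrix on $\{1,\dots,\ell\}$), so Lemma~\ref{Hadamardintegral} followed by two applications of Lemma~\ref{Kroengen1} reduces the whole question to the single scalar relation $\sum_i c_i\,t^{-\nu_i}=0$ for all $t>0$, which forces $c_i=0$ precisely when the $\nu_i$ are distinct.

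You instead keep $C$ inside the $t$-integral and try to peel off the $\mathcal H$ and $X$ variables in succession. After stripping $\mathcal H$ via Lemma~\ref{Kroengen1}, the remaining kernel on $X\times\{1,\dots,\ell\}$ is $C_{i,j}(u,v)\,e^{-\gamma(u,v)/4t}(4t)^{-\nu_i-\nu_j}$, which does \emph{not} split as a tensor product over $X$ and the channel set; in the obvious Schur factorisation neither factor is SPD on $X\times\{1,\dots,\ell\}$ (the pullback of $e^{-\gamma/4t}$ fails to separate channels, and $C$ is only assumed positive definite), so the product lemmas do not justify ``stripping the $X$ variable'' as you describe. One can still pass to the single-integral relation you display, but the subsequent appeal to the linear independence of $\{s\mapsto s^{\nu}e^{-\beta s}:(\nu,\beta)\text{ distinct}\}$ only handles \emph{finite} linear combinations, hence only the discrete (SPD) case. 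For universal and ISPD measures the coefficient attached to $s^{\nu_i}$ is itself a Laplace-type integral $\int_X C_{i,j'}(u,v')\,e^{-\gamma(u,v')s}\,d\mu_i(u)$ depending on $s$, not a constant, and extra work (analytic continuation, asymptotics at $s\to0$) would be required to force each $\mu_i$ to vanish. Factoring $C$ out first, as the paper does, removes this obstacle entirely: after the two tensor reductions the only remaining $t$-dependence is $(4t)^{-\nu_i}$, and nothing beyond the linear independence of the power functions $\{t^{-\nu_i}\}$ is needed.

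The necessity direction in (ii)/(iii) is glossed over in both your proposal and the paper's proof (``it is immediate''), so there is no discrepancy there.
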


\section{Kernels and hyperbolic spaces}\label{Kernels and hyperbolic spaces}
\subsection{Isotropic kernels on real hyperbolic spaces}\label{Isotropic kernels on real hyperbolic spaces}
Let $\mathbb{H}^{m}:=\{(x,t_{x}) \in \mathbb{R}^{m}\times (0,\infty), \quad   t_{x}^{2} - \|x\|^{2} =1 \}$ be the $m$-dimensional real hyperbolic space and consider the kernel
$$
((x,t_{x}),(y,t_{y})) \in \mathbb{H}^{m} \times \mathbb{H}^{m} \to [(x,t_{x}),(y,t_{y})]:= t_{x}t_{y} - \langle x,y\rangle \in [1,\infty),
$$
which satisfies the relation  
$$
\cosh(d((x,t_{x}),(y,t_{y}))) = [(x,t_{x}),(y,t_{y})].
$$
Where $d$ is the geodesic distance in $\mathbb{H}^{m}$. A kernel $K : \mathbb{H}^{m} \times \mathbb{H}^{m} \to \mathbb{R} $ is called \textbf{isotropic} if its invariant by the group 
$$
O(m,1):=\{A \in M_{m+1}(\mathbb{R}), \quad AJ A^{t}= J, \quad  det(A)=1, \quad   A(e_{m+1})= e_{m+1}  \},$$
where $J= Diag(-1, \ldots, -1,1) $, that is, $K(x,y)= K(Ax, Ay)$ for every $x,y \in \mathbb{H}^{m}$ and $A \in O(m,1)$. Similar to isotropic kernels on real spheres \cite{dai}, if $K$ is an isotropic kernel on $\mathbb{H}^{m}$ there exists functions $f: [0, \infty) \to \mathbb{R}$, $g: [1, \infty) \to \mathbb{R}$ for which
$$
K(x,y)= f(d(x,y))= g([x,y]), \quad x, y \in \mathbb{H}^{m}.
$$
 At Section $8$ of \cite{farauthyper} (also page $174$ of \cite{berg0}), it is proved that if $g: [1, \infty) \to \mathbb{R}$ is a continuous function, the kernel
$$
((x,t_{x}),(y,t_{y})) \in \mathbb{H}^{m} \times \mathbb{H}^{m} \to g([(x,t_{x}),(y,t_{y})]) \in \mathbb{R}
$$
is positive definite for every $m \in \mathbb{N}$ if and only if the function $s \in [0,\infty) \to g(e^{s}) \in \mathbb{R}$ is completely monotone, or equivalently that there exists a nonnegative finite measure $\lambda \in \mathcal{M}([0, \infty))$ for which
$$
g(s)=\int_{[0, \infty)}s^{-r}d\lambda(r), \quad s \in [1,\infty). 
$$
In terms of the function $f$, the expression is
$$
f(t)= \int_{[0, \infty)}\sech(t)^{r}d\lambda(r), \quad t \in [0, \infty). $$
In this subsection, we prove several qualitative properties for these kernels in a similar way as Section \ref{Gaussian kernel on Hilbert spaces}. In this sense, a real hyperbolic space is a set $\mathbb{H}$ for which there exists a Hilbert space $\mathcal{H}$ such that
$$
\mathbb{H}=\{(x, t_{x}) \in \mathcal{H}\times (0, \infty), \quad t^{2}_{x} -\|x\|^{2}=1   \}. 
$$
The bilinear form $[\cdot, \cdot]$ is defined analogously, and $d(\cdot , \cdot)=\arccosh([\cdot, \cdot])$ defines a metric on $\mathbb{H}$. 


The following Theorem is a version of Theorem \ref{Gauszinho} and  Theorem \ref{Gauszao} to the hyperbolic setting.

\begin{thm}\label{infihypersecondstep}  The kernel 
$$
(z,w) \in \mathbb{H}\times \mathbb{H} \to H_{r}(z,w):=[z,w]^{-r} = \sech(d(z,w))^{r}  \in \mathbb{R}
$$
is ISPD for every  $r>0$.
\end{thm}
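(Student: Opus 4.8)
My plan is to recognize $H_r$ as $f(\gamma)$ for a non-constant completely monotone function $f$ and a continuous conditionally negative definite metrizable kernel $\gamma$, and then invoke Theorem~\ref{gaussispd}. The natural choice is
$$
\gamma(z,w):=\log[z,w]=\log\cosh(d(z,w)),\qquad f(t):=e^{-rt}.
$$
Since $[z,w]\ge 1$ one has $\gamma\ge 0$, the function $f$ is non-constant completely monotone on $[0,\infty)$, and $f(\gamma(z,w))=[z,w]^{-r}=H_r(z,w)$. Thus the whole statement reduces to checking that $\gamma$ is a continuous conditionally negative definite metrizable kernel on $\mathbb{H}$.

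For conditional negative definiteness: $[z,w]$ is symmetric, hence so is $\gamma$. For every $\rho>0$ the function $s\mapsto(e^{s})^{-\rho}=e^{-\rho s}$ is completely monotone on $[0,\infty)$, so by the characterization of positive definite isotropic kernels on $\mathbb{H}^{m}$ recalled above, $e^{-\rho\gamma(z,w)}=[z,w]^{-\rho}$ is positive definite on every $\mathbb{H}^{m}$. Any finite configuration of points of $\mathbb{H}$ has its $\mathcal{H}$-coordinates inside a finite-dimensional subspace of $\mathcal{H}$, and $[z,w]$ only involves those coordinates (the configuration lies in a totally geodesic copy of some $\mathbb{H}^{m}\subset\mathbb{H}$), so $e^{-\rho\gamma}$ is positive definite on all of $\mathbb{H}$ for every $\rho>0$; by Schoenberg's theorem (the equivalence recalled at the start of Section~\ref{Universality of Schoenberg-Gaussian kernels}) this forces $\gamma$ to be conditionally negative definite. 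Continuity of $\gamma$ is immediate from $(z,w)\mapsto[z,w]=\cosh d(z,w)$ being continuous for the metric topology of $\mathbb{H}$ and $\log$ being continuous on $[1,\infty)$. Metrizability holds because $\gamma(z,z)=\log 1=0$, while for $z\ne w$ one has $d(z,w)>0$, hence $[z,w]=\cosh d(z,w)>1$ and $2\gamma(z,w)>0=\gamma(z,z)+\gamma(w,w)$, which is precisely the metrizability condition.

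With these three facts in place, Theorem~\ref{gaussispd} applied to $\gamma$ and $f(t)=e^{-rt}$ gives that $H_r$ is ISPD for every $r>0$; equivalently, one could apply Theorem~\ref{integrallygaussianonmetric} to the conditionally negative definite metrizable kernel $r\gamma$, whose diagonal vanishes identically and is therefore bounded below. I expect the only delicate point to be the passage from the finite-dimensional models $\mathbb{H}^{m}$ — where the integral representation $g(s)=\int_{[0,\infty)}s^{-\rho}\,d\lambda(\rho)$ of positive definite isotropic kernels is available — to the possibly infinite-dimensional space $\mathbb{H}$; since positive and conditional negative definiteness are conditions on finitely many points, each finite configuration sits inside a copy of some $\mathbb{H}^{m}$, and this reduction is routine. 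Everything else is a direct substitution into results already proved.
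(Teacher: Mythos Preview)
Your proposal is correct and follows essentially the same route as the paper's (short) proof: recognize $H_r=e^{-r\log[z,w]}$, verify that $\gamma=\log[\cdot,\cdot]$ is a continuous CND metrizable kernel with diagonal identically zero, and invoke Theorem~\ref{integrallygaussianonmetric} (or, equivalently, Theorem~\ref{gaussispd}). The only cosmetic difference is that the paper deduces CND of $\log[\cdot,\cdot]$ from the general fact (Section~\ref{Hyperbolic and log-conditional kernels}) that hyperbolic kernels have CND logarithm, whereas you verify it directly via Schoenberg and the Faraut--Harzallah characterization on each $\mathbb{H}^m$ together with the finite-dimensional reduction; both arguments are equivalent and your handling of the passage to infinite-dimensional $\mathbb{H}$ is fine.
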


 A similar  structure result as the one in Lemma \ref{Gausskerinfty} characterizes when the inclusion $\mathcal{H}_{H_{r}} \subset C_{0}(X)$ is satisfied.

\begin{lem}\label{hypGausskerinfty}  Let $\mathbb{H}$ be a real hyperbolic space, $X \subset \mathbb{H}$ be   locally compact and $r>0$. The following conditions are equivalent
\begin{enumerate}
    \item[(i)] There exists $\xi_{0} \in X $ for which the function $H_{r,\xi_{0}}(z)= [z,\xi_{0}]^{-r}$ is an element of $C_{0}(X)$.
    \item[(ii)] The function $H_{r,\xi}(z)= [z,\xi]^{-r}$ is an element of $C_{0}(X)$ for every $\xi \in X$.
    \item[(iii)] The inclusion $\mathcal{H}_{H_{r}}\subset C_{0}(X)$ holds.
    \item[(iv)] Every bounded and closed set on $X$ is a compact set.
\end{enumerate} 
\end{lem}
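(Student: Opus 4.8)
The plan is to follow the same scheme as Lemma~\ref{Gausskerinfty}, exploiting two features of $H_r$: its diagonal is the constant $H_r(z,z)=[z,z]^{-r}=1$, and the radial profile $g(s)=s^{-r}$ is continuous and strictly decreasing on $[1,\infty)$. The key preliminary observation is a dictionary between the metric $d$ and the sublevel sets of the functions $[\,\cdot\,,\xi]$: for $\xi\in\mathbb{H}$ the map $z\mapsto[z,\xi]$ is continuous on $X$ (a restriction of the continuous kernel), and since $\cosh$ is increasing on $[0,\infty)$ the closed $d$-ball $\{z\in X:d(z,\xi)\le\rho\}$ equals $\{z\in X:[z,\xi]\le\cosh\rho\}$. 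Consequently each such set is closed in $X$, and a subset of $X$ is bounded for $d$ (equivalently norm-bounded in $\mathcal{H}\times\mathbb{R}$, since $[z,\xi]$ and $\|z\|$ are comparable on $\mathbb{H}$) exactly when it is contained in one of these sets.

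I would then establish the equivalences through the cycle $(iv)\Rightarrow(ii)\Rightarrow(i)\Rightarrow(iv)$, together with $(ii)\Leftrightarrow(iii)$. The implication $(ii)\Rightarrow(i)$ is immediate. For $(ii)\Leftrightarrow(iii)$ one applies Proposition~\ref{rkhscontained}(ii): since $z\mapsto H_r(z,z)\equiv1$ is bounded, $\mathcal{H}_{H_r}\subset C_0(X)$ holds if and only if $H_{r,\xi}\in C_0(X)$ for every $\xi\in X$, i.e.\ $(ii)$. For $(iv)\Rightarrow(ii)$, fix $\xi\in X$ and $\epsilon>0$ and put $\mathcal{C}_\epsilon:=\{z\in X:[z,\xi]\le\epsilon^{-1/r}\}$; by the dictionary this set is closed and $d$-bounded, hence compact by $(iv)$, and for $z\notin\mathcal{C}_\epsilon$ one has $H_{r,\xi}(z)=[z,\xi]^{-r}<\epsilon$, so $H_{r,\xi}\in C_0(X)$. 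For $(i)\Rightarrow(iv)$, let $\xi_0$ be as in $(i)$ and let $S\subset X$ be closed and bounded, so $S\subset\{z\in X:[z,\xi_0]\le\cosh\rho\}$ for some $\rho$. Choosing $\epsilon$ with $0<\epsilon<(\cosh\rho)^{-r}$, the $C_0$ property of $H_{r,\xi_0}$ gives a compact $\mathcal{C}_\epsilon\subset X$ outside which $H_{r,\xi_0}<\epsilon$; then $\{z\in X:[z,\xi_0]\le\cosh\rho\}=\{z\in X:H_{r,\xi_0}(z)\ge(\cosh\rho)^{-r}\}\subset\mathcal{C}_\epsilon$ is a closed subset of the compact set $\mathcal{C}_\epsilon$, hence compact, and $S$, being closed in $X$ and contained in it, is compact as well.

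Since the whole argument is point-set topology once the geometric dictionary is in place, I do not expect a genuine obstacle; the only points that need attention are the reading of ``bounded'' as boundedness for $d$ and, in the proof of $(i)\Rightarrow(iv)$, the choice of a threshold $\epsilon$ strictly below $(\cosh\rho)^{-r}$ so that the sublevel set $\{z\in X:[z,\xi_0]\le\cosh\rho\}$ is swallowed by the compact set supplied by the definition of $C_0(X)$. As an alternative to the direct treatment of $(i)\Leftrightarrow(iii)$, one may invoke Lemma~\ref{condnegkerinfty} for the conditionally negative definite kernel $\gamma(z,w):=r\log[z,w]$ on $X$: it is metrizable because $2\gamma(z,w)=2r\log[z,w]>0=\gamma(z,z)+\gamma(w,w)$ for $z\neq w$, its diagonal vanishes, and $e^{-\gamma}=H_r$, so Lemma~\ref{condnegkerinfty} yields $(iii)\Leftrightarrow(i)$ at once (the conditional negative definiteness of $\gamma$ being a consequence of the positive definiteness of $H_s$ for all $s>0$, which underlies Theorem~\ref{infihypersecondstep}).
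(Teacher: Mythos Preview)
Your proof is correct and close in spirit to the paper's, but the logical route differs in one place. The paper proves the chain $(i)\Rightarrow(ii)$ directly: writing $\log[z,w]=\|h(z)-h(w)\|^2$ via the conditionally negative definite representation and applying the parallelogram law, it obtains the pointwise bound $[z,\xi]^{-1}\le [\xi,\xi_0]\,[z,\xi_0]^{-1/2}$, which immediately transfers the $C_0$ property from $\xi_0$ to any $\xi$. You instead close the cycle through $(i)\Rightarrow(iv)\Rightarrow(ii)$, using only the monotone dictionary between $d$-balls and sublevel sets of $[\,\cdot\,,\xi_0]$ and the strict inequality $\epsilon<(\cosh\rho)^{-r}$ to trap the ball inside a compact set. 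Your argument is more elementary in that it never unpacks the Hilbert embedding or invokes the parallelogram identity; the paper's argument, on the other hand, yields an explicit comparison inequality between $H_{r,\xi}$ and $H_{r/2,\xi_0}$ that could be reused elsewhere. Your closing remark that $(i)\Leftrightarrow(iii)$ also follows from Lemma~\ref{condnegkerinfty} applied to $\gamma=r\log[\,\cdot\,,\cdot\,]$ is exactly the mechanism behind the paper's $(i)\Rightarrow(ii)$ step, so the two approaches are really the same computation viewed at different levels of abstraction.
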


Similar to the comments made at the end of Section \ref{Gaussian kernel on Hilbert spaces}, if   $g: [1, \infty) \to \mathbb{R}$ is a continuous function for which  the kernel
$$
(x,y) \in \mathbb{H}^{m} \times \mathbb{H}^{m} \to g([x,y]) \in \mathbb{R}
$$
is positive definite for every $m \in \mathbb{N}$, replacing the kernel $[\cdot, \cdot ]^{-r}$ on Theorem \ref{infihypersecondstep} by the kernel $g([\cdot, \cdot ])$ is possible, whenever $g$ is not a constant function. Lemma \ref{hypGausskerinfty}  is also possible whenever $g$ is not a constant function and relation $(iv)$ is replaced by
\begin{enumerate}
    \item[$(iv)^{\prime}$] Every bounded and closed set on $X$ is a compact set and $\lim_{s \to \infty}g(s)=0$. 
\end{enumerate}
The argument that this generalization is possible is also a direct consequence of Theorem $3.7$ in \cite{jean2020}, and we do not present it.

\subsection{Hyperbolic  and log-conditional  kernels}\label{Hyperbolic  and log-conditional  kernels}
 A kernel $\beta : X \times X \to \mathbb{R}$ is called  \textbf{hyperbolic} if there exists a real hyperbolic space $\mathbb{H}$ and a function $h : X \to \mathbb{H}$ for which $\beta(x,y)= [h(x),h(y)]$. At \cite{MonodMoebius} it is proved that a kernel $\beta$ is hyperbolic if and only if $\beta(x,x)=1$ for all $x \in X$ and the kernel
\begin{equation}\label{hypimer}
(x,y) \in X\times X \to \beta(x,z)\beta(y,z) - \beta(x,y) \in \mathbb{R}
\end{equation}
is positive definite for some $z \in X$ (or equivalently, for every $z \in X$).

For example, if $\gamma: X \times X \to \mathbb{R}$ is a conditionally negative definite kernel for which $\gamma(x,x)=0$ for every $x \in X$, then the kernel $\beta(x,y):= 1 + \gamma(x,y)$ is hyperbolic, being a possible argument a verification that the kernel on  Equation \ref{hypimer} is positive definite  using the  representation \ref{condequa}. Also, the kernel $$
(x,y) \in \mathcal{H} \times \mathcal{H} \to \sqrt{1+\|x\|^{2}}\sqrt{1 +\|y\|^{2}} - \langle x,y \rangle \in \mathbb{R}
$$ 
is hyperbolic on every Hilbert space $\mathcal{H}$.

The relation between hyperbolic kernels and the functions $s \in [1, \infty) \to s^{-r} \in \mathbb{R}$, $r \in (0, \infty)$,  is different from the relation between conditionally negative definite kernels and the functions $s \in [0, \infty) \to e^{-sr} \in \mathbb{R}$, $r \in (0, \infty)$. 

 If $\gamma: X \times X \to \mathbb{R}$ is conditionally negative definite, by Schoenberg the kernel  $e^{-r\gamma(x,y)}$ is  positive definite  and the kernel $r\gamma(x,y)$ is conditionally negative definite for every $r>0$. However, if $\beta: X \times X \to \mathbb{R}$ is a hyperbolic kernel, by Faraut and Harzallah the kernel $\beta(x,y)^{-r}$ is  positive definite  for every $r>0$, but the  kernel $\beta^{r}(x,y)$ is (with certainty) hyperbolic only  for  $1\geq r>0$, \cite{MonodMoebius}. What occurs is that by $\beta$ being hyperbolic, $\log(\beta(x,y))$ is a conditionally negative definite kernel, and by Schoenberg this property is equivalent to the kernel $e^{-r\log(\beta(x,y))}= \beta(x,y)^{-r}$ being positive definite for every $r>0$. We say that a symmetric  kernel $L: X \times X \to [1,\infty)$ is \textbf{log-conditional} if the kernel $\log L(x,y)$ is conditionally negative definite. Note that if $L$ is log-conditional then so is $L^{r}$ for every $r>0$. 
 
 Being so, a natural question is to analyse when the kernel $L(x,y)^{-r}$  is SPD/Universal/ ISPD/ $C_{0}$-universal, in a similar way as Section \ref{Universality of Schoenberg-Gaussian kernels}. However, since 
 $$
 L(x,y)^{-r} = e^{-r\log L(x,y)}
 $$
such characterizations are a consequence of the results proved on Section \ref{Universality of Schoenberg-Gaussian kernels}. This also includes the results from Subsection \ref{Isotropic kernels on real hyperbolic spaces}.  For completion, we state these characterizations. Naturally, a log-conditional kernel $L$ is metrizable if $\log L$ is  metrizable.

\begin{thm}\label{hyperimprovSchoen} Let $X$ be a  Hausdorff space and $L: X \times X \to [1, \infty)$ be a  continuous log-conditional kernel. Then the kernel
$$	
(x,y) \in X \times X \to H_{L}(x,y):= L(x,y)^{-1} \in \mathbb{R}	
$$	
is SPD (universal) if and only if the kernel $L$ is metrizable.
\end{thm}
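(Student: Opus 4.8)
The strategy is to reduce everything to Theorem \ref{improvSchoen} via the substitution $\gamma := \log L$. First I would check that this substitution lands in the hypotheses of that theorem: since $L$ is continuous and takes values in $[1,\infty)$, and $t \mapsto \log t$ is continuous on $[1,\infty)$, the kernel $\gamma : X \times X \to [0,\infty) \subset \mathbb{R}$ defined by $\gamma(x,y) = \log L(x,y)$ is continuous; and by the very definition of a log-conditional kernel, $\gamma$ is conditionally negative definite. Moreover, for all $x,y \in X$,
$$
H_{L}(x,y) = L(x,y)^{-1} = e^{-\log L(x,y)} = e^{-\gamma(x,y)} = G_{\gamma}(x,y),
$$
so the kernel under study is literally the Schoenberg--Gaussian kernel $G_{\gamma}$.

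Next I would invoke Theorem \ref{improvSchoen} applied to this $\gamma$: the kernel $G_{\gamma}$, and hence $H_{L}$, is SPD (equivalently, universal) if and only if $\gamma$ is metrizable. Since an iff-statement is being transported, both implications come for free. Finally, by the definition recalled just before the statement --- ``a log-conditional kernel $L$ is metrizable if $\log L$ is metrizable'' --- the condition ``$\gamma = \log L$ is metrizable'' is by definition the condition ``$L$ is metrizable''. Concatenating the two equivalences yields the theorem.

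There is essentially no genuine obstacle here: the analytic content is entirely carried by Theorem \ref{improvSchoen}, and the proof is a matter of verifying that the definitions are set up to agree. The only points that deserve an explicit line are (i) that passing to $\log L$ preserves continuity and conditional negative definiteness (immediate, as above), and (ii) that the notions of metrizability for a conditionally negative definite kernel and for a log-conditional kernel are aligned by construction --- recalling that metrizability of $\gamma$ means $2\gamma(x,y) > \gamma(x,x) + \gamma(y,y)$ for all distinct $x,y$, which is exactly what the definition for $L$ encodes through $\log$. It may also be worth remarking to the reader that the same substitution, with $L(x,y) = [x,y]$ and more generally $L = L^{r}$, simultaneously recovers the SPD/universality portions of the results of Subsection \ref{Isotropic kernels on real hyperbolic spaces}, which is precisely the reduction announced in the text preceding the statement.
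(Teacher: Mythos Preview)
Your proposal is correct and matches the paper's own argument exactly: the paper states that Theorem \ref{hyperimprovSchoen} is a direct consequence of the identity $L(x,y)^{-1}=e^{-\log L(x,y)}$ together with Theorem \ref{improvSchoen}, and omits the proof. Your verification of continuity, conditional negative definiteness, and the alignment of the two notions of metrizability fills in precisely the routine details that the paper leaves implicit.
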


Similar to Section \ref{Universality of Schoenberg-Gaussian kernels}, the set $X$ with the metric induced by the conditionally negative definite kernel $\log L(x,y)$ is being denoted by $X_{\log L}$.  If $L$ is a metrizable hyperbolic kernel, the hyperbolic metric $d_{\mathbb{H}}(x,y):=\arccosh L(x,y)$ and the Hilbertian metric $d_{\mathcal{H}}(x,y):=\sqrt{\log L(x,y)} $ are equivalent because  
$$
d_{\mathcal{H}} = \sqrt{\log \cosh d_{\mathbb{H}}}, \quad d_{\mathbb{H}} = \arccosh (e^{(d_{\mathcal{H}} 
)^{2}})
$$
and the functions $\sqrt{\log \cosh t}$, $\arccosh (e^{t^{2}})$ are continuous  on the the interval $[0, \infty)$.

\begin{thm}\label{integrallyhyperonmetric} Let  $L: X \times X \to [1, \infty)$ be a  continuous log-conditional kernel. Then the kernel
$$
(x,y) \in X \times X \to H_{L}(x,y):= L(x,y)^{-1} \in \mathbb{R}
$$
is  ISPD if and only if $L$ is metrizable.
\end{thm}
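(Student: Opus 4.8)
The plan is to reduce the statement to Theorem \ref{integrallygaussianonmetric} by the substitution $\gamma := \log L$. Since $L$ is log-conditional, $\gamma$ is by definition a conditionally negative definite kernel, and it is continuous because $L$ is continuous and takes values in $[1,\infty)$, where $\log$ is continuous. Moreover $H_{L}(x,y) = L(x,y)^{-1} = e^{-\log L(x,y)} = e^{-\gamma(x,y)}$, so $H_{L}$ is exactly the Schoenberg--Gaussian kernel $e^{-\gamma}$ associated with $\gamma$. Thus Theorem \ref{integrallygaussianonmetric} is applicable once its hypotheses are verified.

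Next I would observe that the hypothesis ``$x \in X \to \gamma(x,x)$ is bounded from below'' appearing in Theorem \ref{integrallygaussianonmetric} is automatic in this setting: since $L(x,y) \geq 1$ for all $x,y \in X$, we have $\gamma(x,x) = \log L(x,x) \geq 0$ for every $x \in X$. Hence Theorem \ref{integrallygaussianonmetric} applies to $\gamma$ with no additional assumption, and it gives that $H_{L} = e^{-\gamma}$ is ISPD if and only if $\gamma$ is metrizable.

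Finally, by the convention fixed just before the statement, a log-conditional kernel $L$ is \emph{metrizable} precisely when $\log L = \gamma$ is metrizable. Combining this with the previous step yields the claimed equivalence: $H_{L}$ is ISPD if and only if $L$ is metrizable. One may also note in passing that $H_{L}$ takes values in $(0,1]$, hence is bounded, and is continuous with locally bounded diagonal, so that $\mathcal{H}_{H_{L}} \subset C(X)$ by Proposition \ref{rkhscontained}(i) and Definition \ref{intstricdefn} is meaningful here; but this is already subsumed in the proof of Theorem \ref{integrallygaussianonmetric}.

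There is essentially no obstacle in this argument; the only point that requires a moment's attention is the verification that the ``bounded from below'' hypothesis of Theorem \ref{integrallygaussianonmetric} comes for free, which is immediate from $L \geq 1$. Everything else is a direct translation through the identity $L^{-1} = e^{-\log L}$.
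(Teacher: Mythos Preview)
Your proposal is correct and follows exactly the approach indicated in the paper, which states that this theorem is a direct consequence of the representation $L(x,y)^{-1}=e^{-\log L(x,y)}$ together with Theorem \ref{integrallygaussianonmetric}. Your observation that the ``bounded from below'' hypothesis is automatic because $L\geq 1$ forces $\gamma(x,x)=\log L(x,x)\geq 0$ is precisely the missing detail the paper leaves implicit.
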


\begin{thm}\label{hyperkerc0infty}  Let $X$ be a  locally compact Hausdorff space and $L: X \times X \to [1, \infty)$ be a  continuous log-conditional kernel. The kernel
$$
(x,y) \in X \times X \to H_{L}(x,y):= L(x,y)^{-1} \in \mathbb{R}
$$
is $C_{0}(X)$-universal if and only if $L$ is metrizable and $\mathcal{H}_{H_{L}} \subset C_{0}(X)$.\\
Further, there exists $z_{0} \in X $ for which the function $H_{L,z_{0}}(x)= L(x,z_{0})^{-1}$ is an element of $C_{0}(X)$ if and only if   $\mathcal{H}_{H_{L}} \subset C_{0}(X)$
\end{thm}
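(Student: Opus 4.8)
The plan is to read the statement off Section~\ref{Universality of Schoenberg-Gaussian kernels} via the identity $H_{L}(x,y)=L(x,y)^{-1}=e^{-\gamma(x,y)}$, where $\gamma:=\log L$. First I would note that $\gamma=\log L$ is a continuous real-valued kernel (since $L$ is continuous with values in $[1,\infty)$ and $\log$ is continuous there) which is conditionally negative definite by the very definition of log-conditional; in particular, by Schoenberg, $H_{L}=e^{-\gamma}$ is positive definite, so $\mathcal{H}_{H_{L}}$ makes sense and coincides with $\mathcal{H}_{G_{\gamma}}$. Theorem~\ref{gaussunboundc0} applied to $\gamma$ then says exactly that $H_{L}$ is $C_{0}(X)$-universal if and only if $\gamma$ is metrizable and $\mathcal{H}_{H_{L}}\subset C_{0}(X)$; since ``$L$ metrizable'' was defined to mean ``$\log L=\gamma$ metrizable'', this is the first assertion, with no extra work.

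For the ``Further'' equivalence one implication is immediate: if $\mathcal{H}_{H_{L}}\subset C_{0}(X)$, then Proposition~\ref{rkhscontained}(ii) gives $H_{L,z}=H_{L}(\cdot,z)\in C_{0}(X)$ for every $z\in X$, in particular for some $z_{0}$. For the converse I would \emph{not} invoke Lemma~\ref{condnegkerinfty}, because its hypothesis that $x\mapsto\gamma(x,x)$ be bounded need not hold here (we only know $\gamma(x,x)=\log L(x,x)\ge 0$) and metrizability is not assumed. Instead I would use the representation~\ref{condequa}: write $\gamma(x,y)=\|h(x)-h(y)\|_{\mathcal{H}}^{2}+f(x)+f(y)$ with $f(x)=\gamma(x,x)/2\ge 0$ and $h\colon X\to\mathcal{H}$. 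Assume $H_{L,z_{0}}\in C_{0}(X)$ and fix $z\in X$, $R>0$. On the closed sublevel set $\{x:\gamma(x,z)\le R\}$ each of the three nonnegative summands of $\gamma(x,z)$ is $\le R$, so $\|h(x)-h(z)\|^{2}\le R$, hence $\|h(x)\|\le\|h(z)\|+\sqrt{R}$, hence $\gamma(x,z_{0})=\|h(x)-h(z_{0})\|^{2}+f(x)+f(z_{0})\le(\|h(z_{0})\|+\|h(z)\|+\sqrt{R})^{2}+R+f(z_{0})=:R'$. Translating through $H_{L}=e^{-\gamma}$ this reads $\{x:H_{L}(x,z)\ge e^{-R}\}\subseteq\{x:H_{L}(x,z_{0})\ge e^{-R'}\}$; the right-hand set is compact because $H_{L,z_{0}}\in C_{0}(X)$, the left-hand set is closed by continuity of $L$, hence compact. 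As $R>0$ is arbitrary and $H_{L,z}$ is continuous, $H_{L,z}\in C_{0}(X)$ for every $z$; since moreover $0\le H_{L}(x,x)=L(x,x)^{-1}\le 1$ is bounded, Proposition~\ref{rkhscontained}(ii) yields $\mathcal{H}_{H_{L}}\subset C_{0}(X)$.

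The only delicate point is the converse just described. Everything else is a verbatim transcription of Section~\ref{Universality of Schoenberg-Gaussian kernels}, but the natural reference, Lemma~\ref{condnegkerinfty}, is unavailable because its diagonal-boundedness hypothesis can genuinely fail for $\gamma=\log L$; so one has to compare the sublevel sets of $\gamma(\cdot,z)$ and $\gamma(\cdot,z_{0})$ directly, which is precisely what the Hilbert-space representation~\ref{condequa} makes possible and which I expect to be the main (though mild) obstacle.
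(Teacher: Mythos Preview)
Your argument is correct. The first assertion is handled exactly as the paper intends: the paper states (and does not elaborate) that Theorem~\ref{hyperkerc0infty} is ``a direct consequence of the representation $L(x,y)^{-1}=e^{-\log L(x,y)}$ and Theorem~\ref{gaussunboundc0}'', which is precisely your first paragraph.

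For the ``Further'' clause the paper simply omits the proof, and you are right that Lemma~\ref{condnegkerinfty} does not apply verbatim: neither metrizability nor boundedness of $x\mapsto\gamma(x,x)=\log L(x,x)$ is assumed (only $\gamma(x,x)\ge 0$). Your sublevel-set comparison via the representation~\eqref{condequa} is a clean fix and is in the same spirit as the proof of Lemma~\ref{condnegkerinfty}. For comparison, the parallelogram-law computation of that lemma can also be salvaged using only $f\ge 0$: from
\[
e^{-\|h(x)-h(z)\|^{2}}\le e^{\|h(z)-h(z_{0})\|^{2}}\,e^{-\|h(x)-h(z_{0})\|^{2}/2}
\]
and $e^{-f(x)}\le e^{-f(x)/2}\le 1$ one gets $H_{L}(x,z)\le C(z,z_{0})\,H_{L}(x,z_{0})^{1/2}$, which yields the same conclusion. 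Either route works; yours avoids the asymmetry in the $C_{0}$ equivalence caused by an unbounded diagonal.
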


\newpage
\section{Proofs}
\subsection{Section \ref{Gaussian kernel on Hilbert spaces}}

 First, we state a few technical results that will be needed.   If $K: X \times  X \to \mathbb{C}$ is a positive definite kernel and $(\psi_{i})_{i \in \mathcal{I}}$ is a complete  orthonormal basis for $\mathcal{H}_{K}$, then it holds that
\begin{equation}\label{Mercergensum}
K(x,y)= \sum_{i \in \mathcal{I}}\psi_{i}(x)\overline{\psi_{i}(y)}, \quad x, y \in X.
\end{equation}

\begin{lem}\label{techequivmetricmercercompact} Let $X$ be a compact Hausdorff space such that there exists a  continuous conditionally negative definite metrizable  kernel. Then $X$ is  homeomorphic to a compact metric space and the RKHS of  any continuous positive definite kernel on $X$ is a separable space. 
\end{lem}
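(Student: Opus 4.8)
The plan is to first use the metrizable kernel to equip $X$ with a metric that recovers its topology, thereby realizing $X$ as a compact metric space, and then to deduce separability of every continuous-kernel RKHS from separability of the domain. For the first part, let $\gamma$ be a continuous conditionally negative definite metrizable kernel on $X$. By the representation \eqref{condequa} we may write $\gamma(x,y)=\|h(x)-h(y)\|_{\mathcal H}^{2}+f(x)+f(y)$ with $f(x)=\gamma(x,x)/2$ and, by metrizability, $h$ injective; hence $D_{\gamma}(x,y)=\|h(x)-h(y)\|_{\mathcal H}$ is a genuine metric on $X$. Since $\gamma$ is continuous and so is $x\mapsto\gamma(x,x)$, the function $D_{\gamma}$ is continuous on $X\times X$ for the original topology $\tau$; in particular every $D_{\gamma}$-ball is $\tau$-open, so the identity map $(X,\tau)\to(X,D_{\gamma})$ is a continuous bijection from a compact space onto a Hausdorff space, hence a homeomorphism. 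Therefore $\tau$ is exactly the metric topology of $D_{\gamma}$, and $(X,D_{\gamma})$ is the required compact metric space.

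For the second part, I would fix a countable $D_{\gamma}$-dense subset $\{x_{n}\}_{n\in\mathbb N}\subset X$ (available since compact metric spaces are separable) and let $K$ be an arbitrary continuous positive definite kernel on $X$. The reproducing property yields
$$
\|K_{y}-K_{y'}\|_{\mathcal H_{K}}^{2}=K(y,y)-2\operatorname{Re}K(y,y')+K(y',y'),
$$
which tends to $0$ as $y'\to y$ by continuity of $K$; thus $y\mapsto K_{y}$ is continuous from $X$ into $\mathcal H_{K}$. For each $y\in X$ choose $x_{n_{k}}\to y$; then $K_{x_{n_{k}}}\to K_{y}$ in $\mathcal H_{K}$, so $K_{y}\in\overline{\operatorname{span}}\{K_{x_{n}}:n\in\mathbb N\}$. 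Since $\operatorname{span}\{K_{y}:y\in X\}$ is dense in $\mathcal H_{K}$, so is $\operatorname{span}\{K_{x_{n}}:n\in\mathbb N\}$, and the countable collection of finite $(\mathbb Q+i\mathbb Q)$-linear combinations of the $K_{x_{n}}$ is then dense in $\mathcal H_{K}$. Hence $\mathcal H_{K}$ is separable.

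The only delicate point is the first step: a continuous pseudometric on a topological space need not generate its topology, and here it is the interplay between compactness of $(X,\tau)$ and the Hausdorff property of the metric that forces the identity to be a homeomorphism; injectivity of $h$ (that is, metrizability of $\gamma$) is precisely what makes $D_{\gamma}$ a metric rather than a mere pseudometric. The second step is the standard RKHS separability argument over a separable domain and presents no real difficulty beyond mild care with complex coefficients.
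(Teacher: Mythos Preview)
Your proof is correct and follows essentially the same route as the paper: define $D_{\gamma}$ from the kernel, observe that continuity of $D_{\gamma}$ makes the identity $(X,\tau)\to(X,D_{\gamma})$ a continuous bijection from a compact space to a Hausdorff space, hence a homeomorphism; then use separability of the compact metric space to obtain separability of $\mathcal H_{K}$. The only difference is cosmetic: the paper dispatches the second step by citing the standard fact (page~130 of \cite{Steinwart}) that a continuous kernel on a separable space has separable RKHS, whereas you write out that argument explicitly via continuity of $y\mapsto K_{y}$ and a countable dense set.
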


\begin{proof}Indeed, if $\gamma: X \times X \to \mathbb{R}$ is a continuous CND metrizable kernel, the metric 
$$
D_{\gamma}(x,y)=\sqrt{\gamma(x,y) - \gamma(x,x)/2 - \gamma(y,y)/2}
$$
is well defined. The inclusion $i : X \to X_{\gamma}$ is a continuous function because the kernel $D_{\gamma}$ is continuous. Conversely, since $X$ is compact and the topologies $X$ and $X_{\gamma}$ are Hausdorff, the inclusion must be a homeomorphism, and we can assume that $X$ is a compact metric space. In particular,  $X$ is a separable space.\\
The conclusion that the RKHS of any continuous positive definite kernel on $X$ must be separable is a consequence that $X$ is a separable space as proved in page $130$ in \cite{Steinwart}.\end{proof}

\begin{lem}\label{toptrick} Let $X$ be a  Hausdorff space and $\lambda$ be a nonzero measure in  $\mathcal{M}(X)$. Then there exists a sequence of nested compact sets $(\mathcal{C}_{n})_{n \in \mathbb{N}}$ for which $\lambda(A) = 0 $ for every Borel measurable set $A \subset X - \bigcup_{n \in \mathbb{N}}\mathcal{C}_{n}$ and $\overline{\bigcup_{n \in \mathbb{N}}\mathcal{C}_{n}}= Supp(\lambda)$.\\
In particular, if there exists a continuous conditionally negative definite metrizable kernel on $X$, then  the set $Supp(\lambda)$ is separable (induced topology)  and the RKHS of any continuous positive definite kernel on $X$ is separable when restricted to    $Supp(\lambda)$. 
\end{lem}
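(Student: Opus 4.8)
The plan is to construct the compact sets $\mathcal{C}_n$ by exploiting the inner regularity of the finite Radon measure $\lambda$ applied to its support. First I would recall that, since $|\lambda|$ is a finite Borel measure that is inner regular on all Borel sets, for each $n \in \mathbb{N}$ there is a compact set $\mathcal{K}_n \subset Supp(\lambda)$ with $|\lambda|(Supp(\lambda) \setminus \mathcal{K}_n) < 1/n$; replacing $\mathcal{K}_n$ by $\mathcal{C}_n := \mathcal{K}_1 \cup \cdots \cup \mathcal{K}_n$ gives a nested increasing sequence of compact subsets of $Supp(\lambda)$ with $|\lambda|\big(Supp(\lambda) \setminus \bigcup_n \mathcal{C}_n\big) = 0$. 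Since $|\lambda|$ vanishes on the complement of $Supp(\lambda)$ by definition of the support, it follows that $|\lambda|$ (hence $\lambda$) assigns measure zero to every Borel set contained in $X \setminus \bigcup_n \mathcal{C}_n$, which is the first claim. For the identity $\overline{\bigcup_n \mathcal{C}_n} = Supp(\lambda)$, the inclusion $\subseteq$ is immediate because each $\mathcal{C}_n \subset Supp(\lambda)$ and $Supp(\lambda)$ is closed; for $\supseteq$, if some $x \in Supp(\lambda)$ were not in the closure, there would be an open neighbourhood $U$ of $x$ disjoint from $\bigcup_n \mathcal{C}_n$, and then $|\lambda|(U) \le |\lambda|\big(X \setminus \bigcup_n \mathcal{C}_n\big) = 0$, contradicting $x \in Supp(\lambda)$.

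For the "in particular" part, assume there is a continuous CND metrizable kernel $\gamma$ on $X$. Each $\mathcal{C}_n$ is a compact Hausdorff space carrying the restriction of $\gamma$, which is still continuous, CND and metrizable, so by Lemma \ref{techequivmetricmercercompact} each $\mathcal{C}_n$ is homeomorphic to a compact metric space and is in particular separable. A countable union of separable subspaces is separable, so $\bigcup_n \mathcal{C}_n$ is separable; since $Supp(\lambda)$ is its closure and the closure of a separable set is separable, $Supp(\lambda)$ is separable in the induced topology. Finally, if $K$ is any continuous positive definite kernel on $X$, its restriction to $Supp(\lambda)$ is a continuous positive definite kernel on a separable metrizable (hence second countable) space, and by the cited result on page 130 of \cite{Steinwart} its RKHS is separable; one then notes that $\mathcal{H}_K$ restricted to $Supp(\lambda)$ coincides (as the completion of the span of the sections $K_y$, $y \in Supp(\lambda)$) with the RKHS of the restricted kernel, which gives the claim.

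The only genuinely delicate point is the passage, in the last sentence, between "the RKHS of $K|_{Supp(\lambda)}$" and "$\mathcal{H}_K$ restricted to $Supp(\lambda)$": one must check that restriction of functions induces an isometric isomorphism of the latter onto the former, which follows from the reproducing property together with the fact that $span\{K_y : y \in Supp(\lambda)\}$ restricts densely and isometrically. Everything else is a routine application of regularity of Radon measures and of Lemma \ref{techequivmetricmercercompact}, so this identification is the main thing to state carefully rather than a real obstacle.
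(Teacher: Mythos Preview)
Your proof is correct and follows essentially the same route as the paper's: inner regularity gives nested compact sets exhausting the measure, the closure identity follows from the definition of the support, and the separability of $Supp(\lambda)$ is deduced from Lemma~\ref{techequivmetricmercercompact} applied to each $\mathcal{C}_n$ together with the Steinwart reference. The only cosmetic difference is that the paper first reduces to nonnegative $\lambda$ via the Hahn decomposition, approximates $X$ from inside by compact sets $\mathcal{D}_n$, and then sets $\mathcal{C}_n=\mathcal{D}_n\cap Supp(\lambda)$, whereas you work directly with $|\lambda|$ and approximate $Supp(\lambda)$ from inside; your version is slightly more streamlined but relies (correctly, by inner regularity and a finite-subcover argument) on the fact that $|\lambda|$ vanishes off $Supp(\lambda)$, which the paper's arrangement sidesteps.
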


\begin{proof}On the first part we may assume that the measure $\lambda$ is nonnegative, because on the general case we can apply the result for the measures appearing on its Hahn decomposition.\\
Due to inner regularity,  there exists a sequence of nested compact sets $\mathcal{D}_{n}$, for which $
0 <  \lambda(\mathcal{D}_{n})$,  $\lim_{n \to \infty}\lambda(\mathcal{D}_{n})= \lambda(X) 
$.
Define  
$$
\mathcal{C}_{n} :=  \{ x \in\mathcal{D}_{n}, \quad \text{ every open set  that contains $x$ has positive measure} \}= \mathcal{D}_{n} \cap Supp(\lambda).
$$ 
Then $\mathcal{C}_{n}$ is compact, $\lambda(\mathcal{C}_{n})= \lambda(\mathcal{D}_{n})$ and $\lambda( \bigcup_{n \in \mathbb{N}}\mathcal{C}_{n}  )= \lambda(X)$. In particular, if $A \subset X - \bigcup_{n \in \mathbb{N}}\mathcal{C}_{n}$  is a measurable set then $\lambda(A)=0$. If $x \in Supp(\lambda)$, then every open set that contains $x$ has positive measure, in particular it must intersect $\bigcup_{n \in \mathbb{N}}\mathcal{C}_{n}$, because otherwise it would have zero measure, and then $x \in \overline{\bigcup_{n \in \mathbb{N}}\mathcal{C}_{n}}$. The fact that $ \overline{\bigcup_{n \in \mathbb{N}}\mathcal{C}_{n}} \subset Supp(\lambda)$ is a direct consequence of its definition.\\
If there exists a  continuous conditionally negative definite metrizable kernel on $X$, by Lemma \ref{techequivmetricmercercompact} each set $\mathcal{C}_{n}$ is separable (induced topology), but then the set $Supp(\lambda)$ is also separable. The conclusion that the RKHS of any continuous positive definite kernel on $X$ must be  separable when restricted to    $Supp(\lambda)$ is a consequence that $Supp(\lambda)$ is a separable space as proved in page $130$ in \cite{Steinwart}.\end{proof}

Lemma  \ref{techequivmetricmercercompact} and Lemma \ref{toptrick} are the main reason why we do not need to impose that the Hilbert space  $\mathcal{H}$ is separable.

The next two Lemmas are  used to simplify some arguments throughout the paper. A proof for the first one is simple and can be found at \cite{reescaling},  the second we present a brief explanation for it as it is a key technical result for the development of the subject.

\begin{lem}\label{simpliuniv}
	Let $X$ be a Hausdorff space, $K:X \times X \to \mathbb{R}$ be a continuous positive definite kernel and $f: X \to \mathbb{R}$ be a continuous function that is nowhere zero. The kernel 
	$$
	(x,y) \in   X \times X \to K_{f}(x,y)=f(x)K(x,y)f(y) \in \mathbb{R}
	$$ 
 is universal  if and only if the kernel $K$ is universal. Further, if the function $f$ is bounded and the kernel $K$ is ISPD then the kernel $K_{f}$ is ISPD.\end{lem}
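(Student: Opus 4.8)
The plan is to reduce the whole statement to the measure-theoretic characterizations recalled after Definitions \ref{univdef} and \ref{intstricdefn}, exploiting the fact that conjugating $K$ by a nowhere-zero real function $f$ corresponds, on the level of the testing measures, to multiplying them by $f$. First I would settle the RKHS inclusion: by Proposition \ref{rkhscontained}(i), $\mathcal{H}_{K_f}\subset C(X)$ is equivalent to local boundedness of $x\mapsto K_f(x,x)=f(x)^2K(x,x)$ together with continuity of $x\mapsto K_f(x,y)=f(x)K(x,y)f(y)$ for each $y$; since $f$ is continuous and nowhere zero, both $f$ and $1/f$ are continuous (hence locally bounded), so these hold if and only if $x\mapsto K(x,x)$ is locally bounded and $x\mapsto K(x,y)$ is continuous, i.e. if and only if $\mathcal{H}_K\subset C(X)$.

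Next I would introduce, for $\lambda\in\mathcal{M}_c(X)$, the measure $f\lambda$ with $d(f\lambda)=f\,d\lambda$. As $f$ is continuous it is bounded on the compact set $\mathrm{Supp}(\lambda)$, so $f\lambda$ is again a finite complex Radon measure whose support is contained in $\mathrm{Supp}(\lambda)$, and $\lambda\mapsto f\lambda$ is a bijection of $\mathcal{M}_c(X)$ onto itself with inverse $\mu\mapsto (1/f)\mu$; moreover $f\lambda=0$ iff $\lambda=0$. Since $f$ is real-valued,
$$
\int_X\int_X K_f(x,y)\,d\lambda(x)\,d\overline{\lambda}(y)=\int_X\int_X K(x,y)\,d(f\lambda)(x)\,d\overline{(f\lambda)}(y).
$$
Assuming now $\mathcal{H}_K\subset C(X)$ (equivalently $\mathcal{H}_{K_f}\subset C(X)$ by the first step), the left side vanishes for some nonzero $\lambda$ exactly when the right side vanishes for the nonzero measure $f\lambda$; by the criterion of \cite{micchelli2006universal} this gives that $K_f$ is universal if and only if $K$ is universal.

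For the ISPD claim, assume in addition that $f$ is bounded and that $K$ is ISPD; then $K$ is bounded, hence so is $K_f$ (with $|K_f|\le\|f\|_\infty^2\sup|K|$), and $\mathcal{H}_{K_f}\subset C(X)$ by the first step. For an arbitrary $\lambda\in\mathcal{M}(X)$ the measure $f\lambda$ is finite, because $|f\lambda|(X)\le\|f\|_\infty|\lambda|(X)<\infty$, so $f\lambda\in\mathcal{M}(X)$ and the displayed identity still applies. If the double integral against $K_f$ in \eqref{c0univform} vanishes, then so does the corresponding one against $K$, whence $f\lambda=0$ since $K$ is ISPD; as $1/f$ is a measurable function, $\lambda=(1/f)(f\lambda)=0$, so $K_f$ is ISPD. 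The only mildly delicate points are verifying that $f\lambda$ is a genuine finite Radon measure and that $\lambda\mapsto f\lambda$ is a bijection on the relevant space of measures (including the non-compactly-supported case, where boundedness of $f$ is exactly what is needed); the rest is a direct substitution in the defining quadratic forms.
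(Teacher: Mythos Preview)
Your argument is correct and is the natural one: conjugation by $f$ corresponds to the bijection $\lambda\mapsto f\lambda$ on the relevant space of testing measures, and the quadratic form identity you display transfers the universality/ISPD criteria back and forth. The paper does not actually give a proof of this lemma; it simply cites \cite{reescaling}, so there is nothing to compare against beyond noting that your approach is the standard one. Two minor remarks: since $K$ is already assumed continuous, the inclusion $\mathcal{H}_K\subset C(X)$ (and hence $\mathcal{H}_{K_f}\subset C(X)$) is automatic, so your first paragraph, while correct, is a bit more than is needed; and in the ISPD step the line ``$\lambda=(1/f)(f\lambda)=0$'' is cleanest if you argue via total variation, namely $|f\lambda|=|f|\,|\lambda|=0$ with $|f|>0$ forces $|\lambda|=0$, which avoids any worry about $1/f$ being unbounded.
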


\begin{lem} \label{prop17caponettogen}Let $X$ and $\tilde{X}$ be   Hausdorff spaces, $\tilde{K}: \tilde{X} \times \tilde{X} \to \mathbb{C}$ be an universal (or ISPD) positive definite kernel and $h: X \to \tilde{X}$  be a continuous function. The  positive definite kernel
	$$	
	(x,y) \in X \times X \to K(x,y):=\tilde{K}(h(x), h(y)) \in  \mathbb{C}
	$$	
	is universal (or ISPD) if and only if the function $h$ is injective.
\end{lem}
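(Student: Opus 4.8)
The plan is to transport the problem to $\tilde X$ by pushing measures forward along $h$, and then to come back by exploiting that a continuous injection restricts to a homeomorphism on every compact subset of the domain.

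First I would record the standing facts. Since $\tilde K$ is universal (resp.\ ISPD) we have $\mathcal{H}_{\tilde K}\subset C(\tilde X)$, so by Proposition~\ref{rkhscontained}(i) the function $s\mapsto\tilde K(s,s)$ is locally bounded and $s\mapsto\tilde K(s,t)\in C(\tilde X)$ for each $t$; composing with the continuous map $h$, the analogous statements hold for $K(x,y)=\tilde K(h(x),h(y))$, hence $\mathcal{H}_K\subset C(X)$, and $K$ is bounded in the ISPD case because an ISPD kernel is bounded. Thus the Micchelli characterisation of universality through Equation~\ref{univform} and the defining relation Equation~\ref{c0univform} of ISPD are both available for $K$. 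For the ``only if'' direction, suppose $h$ is not injective, say $h(x_1)=h(x_2)=:p$ with $x_1\neq x_2$, and take $\lambda=\delta_{x_1}-\delta_{x_2}$, a nonzero element of $\mathcal{M}_c(X)\subset\mathcal{M}(X)$. Expanding, $\sum_{\mu,\nu}c_\mu\overline{c_\nu}K(x_\mu,x_\nu)=\tilde K(p,p)-\tilde K(p,p)-\tilde K(p,p)+\tilde K(p,p)=0$, so Equation~\ref{univform} (resp.\ Equation~\ref{c0univform}) fails for a nonzero measure and $K$ is neither universal nor ISPD.

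For the converse assume $h$ is injective and let $\lambda$ be a nonzero measure with the relevant double integral equal to zero --- in $\mathcal{M}_c(X)$ for the universality claim, in $\mathcal{M}(X)$ for the ISPD claim. Setting $\mu:=h_*\lambda$, one checks that $\mu$ is again a finite Radon measure (using inner regularity of $|\lambda|$ and that $h$ sends compacts to compacts), with compact support $h(Supp(\lambda))$ in the universality case, and that $\int_X\int_X\tilde K(h(x),h(y))\,d\lambda(x)\,d\overline\lambda(y)=\int_{\tilde X}\int_{\tilde X}\tilde K(s,t)\,d\mu(s)\,d\overline\mu(t)$; hence this double integral vanishes and universality (resp.\ ISPD) of $\tilde K$ forces $\mu=h_*\lambda=0$. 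To conclude $\lambda=0$ I would use the topological fact that for every compact $\mathcal{C}\subset X$ the restriction $h|_{\mathcal{C}}\colon\mathcal{C}\to h(\mathcal{C})$ is a continuous bijection of compact Hausdorff spaces, hence a homeomorphism, and that $\mathcal{C}=h^{-1}(h(\mathcal{C}))$ by injectivity; consequently the restriction $\lambda_{\mathcal{C}}$ of $\lambda$ to $\mathcal{C}$ (i.e.\ $E\mapsto\lambda(E\cap\mathcal{C})$) satisfies $h_*(\lambda_{\mathcal{C}})=(h_*\lambda)$ restricted to $h(\mathcal{C})$, which is $0$, and the homeomorphism $h|_{\mathcal{C}}$ then yields $\lambda_{\mathcal{C}}=0$, i.e.\ $|\lambda|(\mathcal{C})=0$. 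In the universality case $\mathcal{C}=Supp(\lambda)$ finishes the argument; in the ISPD case, letting $\mathcal{C}$ run over all compacts and invoking inner regularity gives $|\lambda|(X)=\sup_{\mathcal{C}}|\lambda|(\mathcal{C})=0$. Either way $\lambda=0$, a contradiction, so $K$ is universal (resp.\ ISPD).

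The step I expect to be the main obstacle is exactly this last passage from $h_*\lambda=0$ back to $\lambda=0$: a continuous injection need not be a homeomorphism onto its image once the domain fails to be compact, so one cannot simply invert $h_*$ globally. Localising to compact subsets, where the homeomorphism property is automatic, and then gluing via inner regularity of finite Radon measures, is what circumvents this; a subsidiary but necessary check is that $h_*\lambda$ genuinely belongs to $\mathcal{M}_c(\tilde X)$ (resp.\ $\mathcal{M}(\tilde X)$) so that the criteria already established for $\tilde K$ apply.
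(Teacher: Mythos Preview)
Your proof is correct and follows essentially the same route as the paper: push the measure forward along $h$, invoke universality/ISPD of $\tilde K$ to kill the pushforward, then use injectivity of $h$ together with inner regularity to conclude $\lambda=0$. The paper's return step is slightly more direct than yours---it simply notes that for each compact $\mathcal{C}\subset X$ one has $\lambda(\mathcal{C})=\lambda(h^{-1}[h(\mathcal{C})])=\lambda_h(h(\mathcal{C}))=0$, without invoking the homeomorphism property of $h|_{\mathcal{C}}$---but your version with the homeomorphism is equally valid and arguably more transparent about why the argument works. Your explicit verification that $\mathcal{H}_K\subset C(X)$ and that $h_*\lambda$ lands in the correct measure class is a welcome addition that the paper leaves implicit.
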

 
\begin{proof} If $x \neq y$ but $h(x)=h(y)$, the interpolation matrix of $K$ at $x,y$ is clearly non invertible, and then the kernel is not universal/ISPD.\\
For the converse we prove the ISPD case. Let $\lambda \in \mathcal{M}(X)$ and suppose that 
$$
0=\int_{X}\int_{X}\tilde{K}(h(x), h(y))d\lambda(x)d\overline{\lambda}(y).
$$
If $\lambda_{h}$ is the image measure of $\lambda$ through $h$, then $\lambda_{h} \in \mathcal{M}(\tilde{X})$, and
$$
0=\int_{\tilde{X}}\int_{\tilde{X}}\tilde{K}(z,w)d\lambda_{h}(x)d\overline{\lambda_{h}}(w).
$$
By the ISPD assumption on the kernel $\tilde{K}$, $\lambda_{h}$ is the zero measure, and this implies that $\lambda$ is the zero measure because for every compact set  $\mathcal{C} \subset X$ we have that $\lambda(\mathcal{C})= \lambda(h^{-1}[h(\mathcal{C})])=0$. \end{proof}

In order to prove the next result, we will use an infinite  dimensional version of the multinomial theorem. If $\mathcal{H}$ is a separable real Hilbert space and $(e_{i})_{i \in \mathbb{N}}$ is a complete orthonormal basis for it, then for every $n \in \mathbb{N}$
\begin{equation}\label{multinomial}
\langle x,y\rangle^{n}= \left (\sum_{i \in \mathbb{N}} x_{i}y_{i} \right )^{n} = \sum_{\alpha \in (\mathbb{N},\mathbb{Z}_{+}), |\alpha| =n} \frac{n!}{\alpha!} x^{\alpha}y^{\alpha}
\end{equation}
where $x_{i}= \langle x, e_{i} \rangle $, $(\mathbb{N},\mathbb{Z}_{+})$ is the space of functions from $\mathbb{N}$ to $\mathbb{Z}_{+}$, the condition $|\alpha|=n$ means that $\sum_{i \in \mathbb{N}}\alpha(i)=n$ (in particular $\alpha$ must be  the zero function except for a finite number of points). Also $\alpha! = \prod_{i \in \mathbb{N}}\alpha(i)!$ (which makes sense because $0!=1$) and $x^{\alpha}= \prod_{ \alpha(i) \neq 0}x_{i}^{\alpha(i)}$. This result can be proved using approximations of $\langle x,y\rangle^{n}$ on finite dimensional spaces and the multinomial theorem on those spaces.

\begin{proof}[\textbf{Proof of Theorem \ref{Gauszinho}}]
Note that it is sufficient to prove the case $\sigma = 1/2$ by Lemma \ref{prop17caponettogen}. Since  
$$	
G_{1/2}(x,y) = e^{-\langle x,x\rangle/2} e^{\langle x,y\rangle}e^{-\langle y,y\rangle/2 },	
$$	
Lemma \ref{simpliuniv} implies that the kernel $G_{1/2}$ is universal if and only if the kernel $e^{\langle x,y\rangle}$ is universal.\\
Let $\lambda \in \mathcal{M}_{c}(\mathcal{H})$ be a real valued measure, Lemma \ref{techequivmetricmercercompact} implies that the RKHS of the dot kernel  $\langle x,y\rangle$ is  separable  when restricted to  the compact set $X:=Supp(\lambda)$, that is, there exists a countable orthonormal set $(e_{i})_{i \in \mathbb{N}}$, for which
$$
\langle x,y\rangle= \sum_{i \in \mathbb{N}} \langle x, e_{i} \rangle \langle y, e_{i}\rangle := \sum_{i \in \mathbb{N}}x_{i}y_{i}, \quad x, y \in X.
$$
 If $0=\int_{X}\int_{X}e^{\langle x , y\rangle }d\lambda(x)d\lambda(y)$, then since the dot kernel is bounded on $X$, by the dominated convergence theorem we obtain that 
$$
0=\int_{X}\int_{X}e^{\langle x,y\rangle}d\lambda(x)d\lambda(y)=  \sum_{n \in \mathbb{Z}_{+}}\frac{1}{n!}\int_{X}\int_{X}\langle x,y\rangle^{n}d\lambda(x)d\lambda(y).
$$
Since the kernel 
$$
\langle x,y\rangle^{n}=\sum_{|\alpha|=n, \alpha \in (\mathbb{N},\mathbb{Z}_{+})} \frac{n!}{\alpha!} x^{\alpha}y^{\alpha}
$$
is positive definite, we have that 
\begin{align*}
0&=\int_{X}\int_{X}(\sum_{i \in \mathbb{N}} x_{i} y_{i})^{n}d\lambda(x)d\lambda(y)= \sum_{|\alpha|=n, \alpha \in (\mathbb{N},\mathbb{Z}_{+})}\frac{n!}{\alpha!} \int_{X}\int_{X} x^{\alpha} y^{\alpha}d\lambda(x)d\lambda(y)\\
&=\sum_{|\alpha|=n, \alpha \in (\mathbb{N},\mathbb{Z}_{+})}\frac{n!}{\alpha!} \left |\int_{X}x^{\alpha} d\lambda(x)\right |^{2},
\end{align*}
then $\int_{X}x^{\alpha} d\lambda(x) =0$ for every $\alpha \in (\mathbb{N},\mathbb{Z}_{+})$. The algebra of continuous functions 
$$
\mathcal{A}:=\{ x \in  X \to x^{\alpha } \in \mathbb{R}, \quad \alpha \in (\mathbb{N},\mathbb{Z}_{+})  \}
$$
separates points, because  if $x,y$ are not separated by the algebra $\mathcal{A}$, then
$$
2\langle x,y \rangle = 2\sum_{i \in \mathbb{N}} x_{i}y_{i}=\sum_{i \in \mathbb{N}} x_{i}x_{i} + y_{i}y_{i} = \langle x,x \rangle  + \langle y,y \rangle 
$$
 which can only occur if $x=y$. Since the constant function equal to $1$ belongs to the algebra $\mathcal{A}$, the Stone-Weierstrass Theorem implies that $span\{ x \in  X \to x^{\alpha } \in \mathbb{R} \}$ is dense on $C(X)$, and consequently $\lambda$ must be the zero measure, implying that the kernel $G_{1/2}$ is universal.  
 \end{proof}

\begin{proof}[\textbf{ Proof of Theorem \ref{Gauszao}}]
The proof follows by similar arguments (also notation) as the one we used at the proof of  Theorem \ref{Gauszinho} and several applications of the  dominated convergence theorem. Again, it is sufficient to prove the case $\sigma = 1/2$ by Lemma \ref{prop17caponettogen}. Let   $\lambda \in \mathcal{M}(\mathcal{H})$ be a real valued measure    such that
\begin{equation}\label{Gauszaoa}
0=\int_{\mathcal{H}}\int_{\mathcal{H}}e^{\langle x,y \rangle}e^{-\langle x,x \rangle/2}e^{-\langle y,y \rangle/2}d\lambda(x)d\lambda(y).
\end{equation}
  Lemma \ref{toptrick} implies that the RKHS of the dot kernel  $\langle x,y\rangle$ is  separable  when restricted to  the  set $X:=\overline{Span (Supp(\lambda))}$, then  there exists a countable orthonormal set $(e_{i})_{i \in \mathbb{N}}$ in $\mathcal{H}$, for which
$$
\langle x,y\rangle= \sum_{i \in \mathbb{N}} \langle x, e_{i} \rangle \langle y, e_{i}\rangle := \sum_{i \in \mathbb{N}}x_{i}y_{i}, \quad x, y \in X.
$$ Note that  $|e^{\langle x,y \rangle}e^{-\langle x,x \rangle/2}e^{-\langle y,y \rangle/2}| \leq 1$ and 
\begin{align*}
&\sum_{n =0}^{m}\left |\frac{1}{n!}\langle x,y \rangle^{n}e^{-\langle x,x \rangle/2}e^{-\langle y,y \rangle/2}\right |\\
& \quad \leq e^{-\langle x,x \rangle/2}e^{-\langle y,y \rangle/2} \left (\sum_{n =0}^{m}\frac{1}{n!}\left (\frac{\langle x,x \rangle}{2} + \frac{\langle y,y \rangle}{2}  \right )^{n} \right ) \leq 1,
\end{align*}
 so Equation \ref{Gauszaoa} is equivalent at
\begin{equation}\label{Gauszaob}
0=\int_{X}\int_{X}\langle x,y \rangle^{n}e^{-\langle x,x \rangle/2}e^{-\langle y,y \rangle/2}d\lambda(x)d\lambda(y), \quad n \in \mathbb{Z}_{+}.
\end{equation}
Since 
$$
\langle x,y \rangle^{n} = \sum_{|\alpha|=n, \alpha \in (\mathbb{N},\mathbb{Z}_{+})} \frac{n!}{\alpha!} x^{\alpha}y^{\alpha}, \quad x, y \in X,
$$
Equation \ref{Gauszaob} is equivalent at
\begin{equation}\label{Gauszaoc}
0=\int_{X}x^{\alpha}e^{-\langle x,x \rangle/2}d\lambda(x), \quad \alpha \in (\mathbb{N},\mathbb{Z}_{+}), |\alpha| < \infty
\end{equation}
because $|\langle x,y \rangle^{n} e^{-\langle x,x \rangle/2}e^{-\langle y,y \rangle/2}| \leq n!$, also
\begin{align*}
2&\sum_{|\alpha|=n, \alpha \in (\mathbb{N},\mathbb{Z}_{+})}\frac{n!}{\alpha!}\left |x^{\alpha}y^{\alpha}e^{-\langle x,x \rangle/2}e^{-\langle y,y \rangle/2}\right |\\
&\leq e^{-\langle y,y \rangle/2}e^{-\langle x,x \rangle/2}\sum_{|\alpha|=n, \alpha \in (\mathbb{N},\mathbb{Z}_{+})}\frac{n!}{\alpha!} (x^{2\alpha} +  y^{2\alpha}) \\
& \leq e^{-\langle y,y \rangle/2}e^{-\langle x,x \rangle/2}(\langle x,x \rangle^{n} + \langle y,y \rangle^{n})  \leq n!,
\end{align*}
and consequently
$$
\int_{X}\int_{X}\langle x,y \rangle^{n}e^{-\langle x,x \rangle/2}e^{-\langle y,y \rangle/2}d\lambda(x)d\lambda(y)= \sum_{|\alpha|=n, \alpha \in (\mathbb{N},\mathbb{Z}_{+})} \frac{n!}{\alpha!} \left |   \int_{X}x^{\alpha}e^{-\langle x,x \rangle/2}d\lambda(x)\right|^{2}.
$$
$\circ$ (\textbf{Affirmation $1$}) We claim that 
\begin{equation}\label{gausssunboundc0eq1}
\int_{X} x^{\alpha}e^{-r\langle x , x \rangle /2}d\lambda(x)=0
\end{equation}
for every $\alpha \in (\mathbb{N},\mathbb{Z}_{+})$ with $|\alpha| < \infty$ and $r>0$. Indeed, Equation \ref{Gauszaoc} implies that Equation \ref{gausssunboundc0eq1} is valid for $r=1/2$, and we use an induction type of  argument to prove  the general case. Suppose that Equation \ref{gausssunboundc0eq1} holds for a $r^{\prime}>0$, we claim that it also holds for every $r \in (0, 2r^{\prime})$. Indeed,    for every  $\beta \in (\mathbb{N},\mathbb{Z}_{+})$ with $|\beta| < \infty$
$$
\int_{X}x^{\alpha}x^{2\beta} e^{-r^{\prime}\langle x , x \rangle} d\lambda(x) =0.
$$
By the  dominated convergence theorem 
\begin{align*}
0&=\sum_{|\beta|=n, \beta \in (\mathbb{N},\mathbb{Z}_{+})} \frac{n!}{\beta!}\int_{X}x^{\alpha}  x^{\beta}x^{\beta} e^{-r^{\prime}\langle x , x \rangle} d\lambda(x) = \int_{X}x^{\alpha} \left ( \sum_{|\beta|=n, \beta \in (\mathbb{N},\mathbb{Z}_{+})} \frac{n!}{\beta!}x^{\beta}x^{\beta}\right )e^{-r^{\prime}\langle x , x \rangle} d\lambda(x)\\
&= \int_{X}x^{\alpha} \langle x , x \rangle^{n}e^{-r^{\prime}\langle x , x \rangle} d\lambda(x) .
\end{align*}
 In particular, by applying once again the dominated convergence theorem, we obtain that for every $|s|<r^{\prime}$ 
\begin{align*}
0&= \sum_{n \in \mathbb{Z}_{+}}\frac{s^{n}}{n!}\int_{X}x^{\alpha} \langle x , x \rangle^{n}e^{-r^{\prime}\langle x , x \rangle} d\lambda(x) = \int_{X}x^{\alpha} \left ( \sum_{n \in \mathbb{Z}_{+}}\frac{s^{n}}{n!} \langle x , x \rangle^{n}\right )e^{-r^{\prime}\langle x , x \rangle} d\lambda(x)\\ 
&=  \int_{X}x^{\alpha} e^{( s-r^{\prime})\langle x , x \rangle} d\lambda(x),
\end{align*}
and so our claim is true on the interval $r \in (0, 2r^{\prime})$ by choosing $s = r- r^{\prime}$.\\
$\circ$ (\textbf{Affirmation $2$}) We claim that
\begin{equation}\label{Gauszaod}
0=\int_{X}\prod_{\mu =1}^{m}\left (e^{\langle x,z_{\mu} \rangle }e^{-\langle x,x \rangle/2} \right )d\lambda(x), 
\end{equation}
for whichever $m \in \mathbb{N}$ and $z_{1}, \ldots , z_{m} \in X$ (not necessarily distinct). Indeed, if $z = z_{1} + \ldots +z_{m}$
$$
\sum_{n=0}^{\infty}\frac{1}{n!}\int_{X} \langle x, z \rangle^{n}e^{-m\langle x,x \rangle/2}  d\lambda(x) =  \int_{X} e^{\langle x,z\rangle }e^{-m\langle x,x \rangle/2}d\lambda(x)=\int_{X}\prod_{\mu =1}^{m}\left (e^{\langle x,z_{\mu}\rangle }e^{-\langle x,x \rangle/2}\right )d\lambda(x),
$$
 because  $|e^{\langle x,z\rangle }e^{-m\langle x,x \rangle/2}|\leq e^{\langle z,z\rangle /2}e^{-(m-1)\langle x ,x\rangle /2} $  and
\begin{align*}
&\sum_{n=0 }^{\infty}\frac{1}{n!}\left |\langle x, z \rangle^{n}e^{-m\langle x,x \rangle/2} \right |\\
& \quad \leq  \sum_{n=0 }^{\infty}\frac{1}{n!} \left (\frac{\langle x,x \rangle + \langle z, z\rangle }{2} \right )^{n}e^{-m\langle x,x \rangle/2} \leq e^{\langle z,z\rangle /2}e^{-(m-1)\langle x ,x\rangle /2}.
\end{align*}
Similarly, 
$$
\sum_{|\alpha|=n, \alpha \in (\mathbb{N},\mathbb{Z}_{+})} \frac{n!}{\alpha!}z^{\alpha}  \int_{X} x^{\alpha}e^{-m\langle x,x \rangle /2} d\lambda(x)=\int_{X} \langle x, z \rangle^{n}e^{-m\langle x,x \rangle/2}  d\lambda(x), 
$$
because $|\langle x, z \rangle^{n}e^{-m\langle x,x \rangle/2}| \leq 2^{-n}(\langle x, x \rangle + \langle z, z \rangle)^{n}e^{-m\langle x,x \rangle/2}$ and 
\begin{align*}
 2&\sum_{|\alpha|=n, \alpha \in (\mathbb{N},\mathbb{Z}_{+})}  \frac{n!}{\alpha!}\left | x^{\alpha} z^{\alpha}e^{-m\langle x,x \rangle/2} \right |\\
 &\leq   \sum_{|\alpha|=n, \alpha \in (\mathbb{N},\mathbb{Z}_{+})}  \frac{n!}{\alpha!} (x^{2\alpha}+  z^{2\alpha})e^{-m\langle x,x \rangle/2} = \langle x,x \rangle^{n} +  \langle z,z \rangle^{n})e^{-m\langle x,x \rangle}. \end{align*}
The conclusion follows from Affirmation $(1)$.
Now, consider the algebra of functions generated by the set
$$
\mathcal{A} :=span\{x \in X  \to e^{\langle x,z \rangle}e^{-\langle x,x \rangle/2} \in \mathbb{R},\quad  z \in X \}.
$$
 Equation \ref{Gauszaod} implies that for every $h \in \mathcal{A}$  it holds that $\int_{X}h(x)d\lambda(x)=0$. Moreover:
\begin{enumerate}
    \item[(i)]There exists $h \in \mathcal{A}$ for which $h(x)>0$ for every $x \in X$.
    \item[(ii)] If $z_{1}, z_{2} \in X$,  $B(z_{1},R_{1})$ and $B(z_{2}, R_{2})$ are two disjoint open balls of $X$, there exists $h \in \mathcal{A}$ for which $h(x)>0$ on $B(z_{1},R_{1}) \cap X$ and $h(x)<0$ on $B(z_{2},R_{2})\cap X$.
\end{enumerate}
For $(i)$, take $h$ as any of the functions $x \to e^{\langle x,z \rangle}e^{-\langle x,x \rangle/2}$. As for $(ii)$, define
\begin{align*}
h(x) &= e^{-\langle z_{1},z_{1} \rangle/2}( e^{\langle x,z_{1} \rangle}e^{-\langle x,x \rangle/2} )e^{R_{1}^{2}/2} -e^{-\langle z_{2},z_{2} \rangle/2}( e^{\langle x,z_{2} \rangle}e^{-\langle x,x\rangle/2} )e^{R_{2}^{2}/2}\\
&=e^{-\| x - z_{1} \|^{2}/2 + R_{1}^{2}/2} -e^{-\| x - z_{2} \|^{2}/2 + R_{2}^{2}/2}. 
\end{align*}
Theorem \ref{Farrelproof} implies that $\mathcal{A}$ is dense on $L^{1}(X,|\lambda|)$. If $P^{+}, N^{-}$ is a Hahn decomposition for the measure $\lambda$, the continuous linear functional 
$$
h \in L^{1}(X,|\lambda|) \to \int_{X}h(x)(\chi_{P^{+}}(x)- \chi_{N^{-}}(x))d|\lambda|(x)= \int_{X}h(x)d\lambda(x) \in \mathbb{R}
$$
is zero on $\mathcal{A}$, which can only happen if $\chi_{P^{+}} = \chi_{N^{-}}$ on $L^{\infty}(X, |\lambda|)$, but then $\lambda$ must be the zero measure, which concludes the proof. 
\end{proof}

\begin{proof}[\textbf{Proof of the Lemma \ref{Gausskerinfty} }]We only prove the case $\sigma=1$, the others follows by a simple change of notation.\\
Suppose that $(i)$ holds, in particular, the function $e^{-\|x-z_{0}\|^{2}/2}$ also belongs to $C_{0}(X)$. By the parallelogram law
$$
e^{-\|x-z\|^{2}}= e^{-\|x-z_{0}\|^{2}/2 -  \|x+z_{0} - 2z\|^{2}/2 + \|z-z_{0}\|^{2}} \leq e^{\|z-z_{0}\|^{2}}e^{-\|x-z_{0}\|^{2}/2},
$$
implying that the function $e^{-\|x-z\|^{2}}$ belongs to $C_{0}(X)$ . It is immediate that $(ii)$ implies $(i)$\\ 
Relations $(ii)$ and $(iii)$ are equivalent by Proposition \ref{rkhscontained}.\\
If $(iv)$ holds, then for every $z \in X$ and $\epsilon >0$, the set $\{x \in X, \quad e^{-\|x-z\|^{2}}\geq \epsilon \}$ is bounded and closed on $X$, so it must be compact by the hypothesis implying that the function $G_{\sigma, z} \in C_{0}(X)$ for every $z \in X$. For the converse, it is sufficient to show  that if $z \in X$ and $r>0$ then  the closed ball $\|x-z\|\leq r$ is compact, but this is  the set of points  that satisfies  $e^{-\|x-z\|^{2}} \geq e^{-r^{2}}$, which is compact by the $C_{0}$ hypothesis.
\end{proof}

 \begin{proof}[\textbf{Proof of Theorem \ref{gausssunboundc0}}] It is sufficient to prove the case $\sigma=1/2$ by Lemma \ref{prop17caponettogen}. If the kernel is $C_{0}$-universal, by definition  its necessary that $\mathcal{H}_{G_{1/2}} \subset C_{0}(X)$.     Conversely, if $\mathcal{H}_{G_{1/2}} \subset C_{0}(X)$  we only need to to prove that the kernel is ISPD  (on the sigma algebra $\mathscr{B}(X)$). We present a proof that does not involve Theorem \ref{Farrelproof},  instead we use the $C_{0}$ version of the Stone-Weierstrass Theorem, which can be found at Section $4.7$ at \cite{folland}.\\ 
The arguments are the same as   the one of Theorem \ref{integrallygaussianonmetric} up to Equation \ref{gausssunboundc0eq1} (we do not use Affirmation $2$). 
The algebra of continuous functions on $X$
$$
\mathcal{A}:= \{x^{\alpha } e^{-r\langle x , x \rangle},\quad   r \in (0, \infty), \quad \alpha \in (\mathbb{N},\mathbb{Z}_{+}), |\alpha|< \infty    \} \subset C_{0}(X).
$$
The function $h(x)= e^{-\langle x , x \rangle} \in \mathcal{A}$ is such that $h(x)>0$ for every $x \in X$, also, the algebra $\mathcal{A}$ separates points because if $x^{\alpha}e^{-\langle x , x \rangle}= y^{\alpha}e^{-\langle y , y \rangle}$ for every $\alpha \in (\mathbb{N},\mathbb{Z}_{+}), |\alpha|< \infty$, then we must have that $\langle x , x \rangle= \langle x , y \rangle= \langle y , y \rangle$, which  can only occur if $x=y$. As a direct consequence of the $C_{0}$ version of the Stone-Weierstrass Theorem we obtain that $\lambda$ must be the zero measure, proving that the kernel $G_{1/2}$ is $C_{0}(X)$-universal.
\end{proof}

\subsection{Section \ref{Universality of Schoenberg-Gaussian kernels}}

\begin{proof}[\textbf{Proof of Theorem \ref{improvSchoen}}]
Since $\gamma$ is a conditionally negative definite kernel, by Equation \ref{condequa} there exists a Hilbert space $\mathcal{H}$ and functions $h: X  \to \mathcal{H}$ and  $f : X \to \mathbb{R}$ for which
$$
\gamma(x,y)=  f(x) + \|h(x) - h(y)\|^{2} + f(y).
$$
Note that that the functions $f,h$ are continuous because $f(x)= \gamma(x,x)/2$, and $\|h(x) -h(y) \|= \sqrt{\gamma(x,y) - f(x) - f(y)}$. \\
If the kernel $G_{\gamma}$ is universal, then the kernel $G_{\gamma}$ is SPD by definition.\\
If the kernel $G_{\gamma}$ is SPD the matrix
$$
\begin{bmatrix}
e^{-\gamma (z,z)} & e^{-\gamma (z,w)} \\ 
e^{-\gamma (w,z)} & e^{-\gamma (w,w)}  
\end{bmatrix}
$$
is  positive definite and its determinant is equal to $e^{-\gamma(z,z) - \gamma(w,w)} ( 1 - e^{\gamma(z,z) + \gamma(w,w) -2\gamma(z,w) } )$, which is nonzero for every $z \neq w$ if and only if $\gamma$ is metrizable.  

It only remains to prove that if $\gamma$ is metrizable  the kernel $G_{\gamma}$ is universal. Since
$$	
G_{\gamma}(x,y) = e^{-f(x)} e^{-\|h(x) - h(y)\|^{2}}e^{ -f(y) },	
$$	
Lemma \ref{simpliuniv} implies that the kernel $G_{\gamma}$ is universal if and only if the kernel $e^{-\|h(x) - h(y)\|^{2}}$ is universal. The metrizability is equivalent to the injectivity of the function $h$, being so,  the kernel is universal by Proposition \ref{prop17caponettogen} and Theorem \ref{Gauszinho}.
\end{proof}

\begin{proof}[\textbf{ Proof of Theorem \ref{integrallygaussianonmetric}}] If $G_{\gamma}$ is ISPD the function $x \in X \to \gamma(x,x)$ must be bounded from below in order to $G_{\gamma}$ be a bounded kernel. Since  every ISPD kernel is an SPD kernel,   Theorem \ref{improvSchoen}  implies that   $\gamma$ is metrizable.\\
Conversely, since $\gamma$ is a continuous conditionally negative definite kernel, by Equation \ref{condequa} there exists a Hilbert space $\mathcal{H}$ and continuous functions $h: X  \to \mathcal{H}$ and  $f : X \to \mathbb{R}$ for which
$$
\gamma(x,y)=  f(x) + \|h(x) - h(y)\|^{2} + f(y).
$$
Note that the kernel $e^{-\|h(x) - h(y)\|^{2}}$ is ISPD, because $h$ is continuous  and injective, Lemma \ref{simpliuniv} and Theorem \ref{Gauszao}. Also, the function $x \to e^{ -f(x) }$ is bounded because the function $x \in X \to \gamma(x,x)$ is bounded from below.      Proposition \ref{prop17caponettogen}  implies that  
$$	
G_{\gamma}(x,y) = e^{-f(x)} e^{-\|h(x) - h(y)\|^{2}}e^{ -f(y) },	
$$	
is an ISPD kernel.
\end{proof}

\begin{proof}[\textbf{Proof of the Lemma \ref{condnegkerinfty}}]  Since the function $\gamma(x,x)$ is bounded and using Equation \ref{condequa}, for every $z \in X$ the function $ x \to e^{-\gamma(x,z)} \in C_{0}(X)$ if and only if $x \to e^{-\|h(x) - h(z)\|^{2}} \in C_{0}(X)$.   By the parallelogram law, we have that  
$$
e^{-\|h(x) - h(z)\|^{2}}= e^{-\|h(x)-h(z_{0})\|^{2}/2 -  \|h(x)+h(z_{0}) - 2h(z)\|^{2}/2 + \|h(z)-h(z_{0})\|^{2}} \leq e^{-\|h(x)-h(z_{0})\|^{2}/2}e^{\|h(z)-h(z_{0})\|^{2}}  ,
$$
implying that the function the function $ x \to e^{-\|h(x) - h(z)\|^{2}} \in C_{0}(X)$ for some $z \in X$ if and only if $x \to e^{-\|h(x) - h(z)\|^{2}} \in C_{0}(X)$ for every $z \in X$. Proposition \ref{rkhscontained} concludes the proof.\end{proof}

\begin{proof}[\textbf{Proof of Theorem \ref{gaussunboundc0}}] If the kernel is $C_{0}(X)$-universal then it is SPD, which by Theorem \ref{improvSchoen} the kernel $\gamma$ must be metrizable. Also,  $\mathcal{H}_{G_{\gamma}}\subset C_{0}(X)$ by definition.\\
Conversely, if  $\mathcal{H}_{G_{\gamma}}\subset C_{0}(X)$,  $G_{\gamma}$ is $C_{0}$-universal if and only if it is  ISPD, which occur due to Theorem Theorem \ref{integrallygaussianonmetric}.\end{proof}

\begin{proof}[\textbf{Proof of Theorem \ref{gaussispd}}]Since 
$$
f(\gamma(x,y))= \int_{[0, \infty)}e^{-r\gamma(x,y)}d\lambda(r), 
$$
where $\lambda$ is a nonnegative measure in   $\mathcal{M}([0, \infty))$,  if $\gamma$ is metrizable then $e^{-\gamma(x,y)}$ is an ISPD kernel and if $f$ is not the constant function (which is equivalent at $\lambda (0, \infty)>0$), then  $f(\gamma(x,y))$ is ISPD by  Theorem $3.7$ in \cite{jean2020}.
\end{proof}

\subsection{Products of positive definite kernels}\label{Products of positive definite kernels}

The Schur product Theorem asserts that if $p, q: X \times X \to \mathbb{C}$ are positive definite kernels, then their product kernel
$$(x,y) \in X \times X \to (p\odot q)(x,y) :=p(x,y)q(x,y)$$
is positive definite. This result is a direct consequence that the Hadamard Product of positive definite kernels is positive definite, where if $p: X \times X \to \mathbb{C}$, $q: Z \times Z \to \mathbb{C}$ are positive definite kernels, its Hadamard product is the kernel
$$
((x,z),(y,w)) \in (X \times Z) \times  (X \times Z) \to (p\otimes q)((x,z),(y,w)) :=p(x,y)q(z,w)
$$

In this section we prove some results concerning the relation between the Schur/Hadamard product of kernels and the concepts of SPD/universality/ISPD. We also  present a weighted version of these results. 

We emphasize that in this section we implicitly assume that the domain of the kernels is  a Hausdorff space.

\begin{lem}\label{Schurmulgen1}Let  $p, q: X \times X \to \mathbb{C}$ be  continuous positive definite kernels. Suppose that  the kernel $p$ is SPD/universal/ISPD, then a measure $\lambda \in \mathcal{M}_{\delta}(X)/\mathcal{M}_{c}(X)/\mathcal{M}(X)$ satisfies 
$$
\int_{X}\int_{X}p(x,y)q(x,y)d \lambda(x)\overline{\lambda}(y)=0. 
$$	
if and only if  
$$
\int_{A}\int_{A}q(x,y)d \lambda(x)\overline{\lambda}(y)=0,  
$$
for every  $A \in \mathscr{B}(X)$.  In particular,  the kernel $p\odot q$ is SPD/universal/ISPD if and only if $q(x,x)>0$ for every $x \in X$.
\end{lem}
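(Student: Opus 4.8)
The plan is to expand $p$ through an orthonormal basis of its RKHS, rewrite the double integral against $p\odot q$ as a sum of nonnegative double integrals against $q$, and then pass between ``testing against the measures $\psi_{i}\lambda$'' and ``testing against the measures $\chi_{A}\lambda$, $A\in\mathscr{B}(X)$'' using the density of $\mathcal{H}_{p}$ that the hypothesis on $p$ provides. Fix $\lambda$ in the class attached to the property ($\mathcal{M}_{\delta}(X)$ for SPD, $\mathcal{M}_{c}(X)$ for universal, $\mathcal{M}(X)$ for ISPD) and abbreviate $\langle\!\langle\mu,\nu\rangle\!\rangle_{q}:=\int_{X}\int_{X}q\,d\mu\,d\overline{\nu}$, the positive semidefinite form attached to $q$. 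Let $(\psi_{i})_{i\in\mathcal{I}}$ be a complete orthonormal basis of $\mathcal{H}_{p}$; by Equation \ref{Mercergensum} one has $p(x,y)=\sum_{i\in\mathcal{I}}\psi_{i}(x)\overline{\psi_{i}(y)}$ with $\sum_{i}|\psi_{i}(x)\psi_{i}(y)|\le\sqrt{p(x,x)p(y,y)}$. Setting $d(\psi_{i}\lambda):=\psi_{i}\,d\lambda$ and interchanging the (summable) family with the double integral — legitimate by dominated convergence, trivially so when $\lambda\in\mathcal{M}_{\delta}(X)$ or $\mathcal{M}_{c}(X)$, and for $\lambda\in\mathcal{M}(X)$ using that $p$ is bounded together with the exhaustion of $\mathrm{Supp}(\lambda)$ in Lemma \ref{toptrick} — gives
\[
\int_{X}\int_{X}p(x,y)q(x,y)\,d\lambda(x)\,d\overline{\lambda}(y)=\sum_{i\in\mathcal{I}}\langle\!\langle\psi_{i}\lambda,\psi_{i}\lambda\rangle\!\rangle_{q},
\]
every summand being $\ge 0$ since $q$ is positive definite. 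Hence the left-hand side vanishes if and only if $\langle\!\langle\psi_{i}\lambda,\psi_{i}\lambda\rangle\!\rangle_{q}=0$ for every $i\in\mathcal{I}$.

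By Cauchy--Schwarz for $\langle\!\langle\cdot,\cdot\rangle\!\rangle_{q}$, the set $N_{q}:=\{\mu:\langle\!\langle\mu,\mu\rangle\!\rangle_{q}=0\}$ is a linear subspace, so $V:=\{g\in L^{1}(|\lambda|):g\lambda\in N_{q}\}$ is a linear subspace of $L^{1}(|\lambda|)$. Writing $q_{x}:=q(\cdot,x)\in\mathcal{H}_{q}$, one has $\langle\!\langle g\lambda,g\lambda\rangle\!\rangle_{q}=\big\|\,T g\,\big\|_{\mathcal{H}_{q}}^{2}$ where $T g:=\int_{X}g(x)\,q_{x}\,d\lambda(x)$; since $\|q_{x}\|_{\mathcal{H}_{q}}=\sqrt{q(x,x)}$ is bounded on $\mathrm{Supp}(\lambda)$ — automatic for $\mathcal{M}_{\delta}(X)$ and $\mathcal{M}_{c}(X)$, and achieved for $\mathcal{M}(X)$ after restricting $\lambda$ to the nested compacta of Lemma \ref{toptrick} — the map $T$ is a bounded operator $L^{1}(|\lambda|)\to\mathcal{H}_{q}$, so $V=\ker T$ is \emph{closed}. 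Finally, $V$ contains every simple function if and only if $\int_{A}\int_{A}q\,d\lambda\,d\overline{\lambda}=0$ for every Borel $A$: ``only if'' is the case $g=\chi_{A}$, and ``if'' holds because the cross terms $\langle\!\langle\chi_{A}\lambda,\chi_{B}\lambda\rangle\!\rangle_{q}$ then vanish by Cauchy--Schwarz, killing all mixed contributions in $\langle\!\langle g\lambda,g\lambda\rangle\!\rangle_{q}$ for $g$ simple.

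Now the hypothesis enters: $p$ being SPD / universal / ISPD means exactly that $\mathcal{H}_{p}$, hence $\mathrm{span}\{\psi_{i}\}$, is dense in $L^{1}(|\lambda|)$ for each nonzero $\lambda$ in the corresponding class — for $\mathcal{M}_{\delta}(X)$ this is invertibility of the interpolation matrix; for $\mathcal{M}_{c}(X)$, universality yields density of $\mathcal{H}_{p}$ in $C(\mathrm{Supp}\,\lambda)$, which is dense in $L^{1}(|\lambda|)$; for $\mathcal{M}(X)$ it is the reinterpretation recorded after Definition \ref{intstricdefn}, applied to $|\lambda|$ restricted to the compacta of Lemma \ref{toptrick}. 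Since $V$ is closed we obtain the chain $\langle\!\langle\psi_{i}\lambda,\psi_{i}\lambda\rangle\!\rangle_{q}=0\ \forall i\ \Longleftrightarrow\ \mathrm{span}\{\psi_{i}\}\subset V\ \Longleftrightarrow\ V=L^{1}(|\lambda|)\ \Longleftrightarrow\ \chi_{A}\in V\ \forall A\in\mathscr{B}(X)\ \Longleftrightarrow\ \int_{A}\int_{A}q\,d\lambda\,d\overline{\lambda}=0\ \forall A$, which combined with the first paragraph is the asserted equivalence.

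For the final assertion: if $q(x,x)>0$ for all $x$ and $\int_{A}\int_{A}q\,d\lambda\,d\overline{\lambda}=0$ for all $A$, then $V=L^{1}(|\lambda|)$, i.e.\ $T=0$, so (as $g$ ranges over $L^{1}(|\lambda|)$, $g\lambda$ ranges over all finite $\nu\ll|\lambda|$) $\int_{X}q_{x}\,d\nu(x)=0$ in $\mathcal{H}_{q}$ for every finite $\nu\ll|\lambda|$; taking $\nu=\chi_{A}|\lambda|$ shows the $\mathcal{H}_{q}$-valued measure $A\mapsto\int_{A}q_{x}\,d|\lambda|(x)$ vanishes, hence $q_{x}=0$ for $|\lambda|$-a.e.\ $x$, hence $q(x,x)=\|q_{x}\|_{\mathcal{H}_{q}}^{2}=0$ for $|\lambda|$-a.e.\ $x$, forcing $|\lambda|=0$; so $p\odot q$ has the property. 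Conversely, if $q(x_{0},x_{0})=0$ then $\lambda=\delta_{x_{0}}$ is a nonzero member of each of the three classes with $(p\odot q)(x_{0},x_{0})=0$, so $p\odot q$ fails it (the standing requirements $\mathcal{H}_{p\odot q}\subset C(X)$ in the universal case, and $\subset C_{0}(X)$ or boundedness in the ISPD case, hold routinely because $p$ and $q$ are continuous). The step I expect to require the most care is the closedness of $V$ in the ISPD case, where $q$ may be unbounded and $T$ need not be defined on all of $L^{1}(|\lambda|)$: the remedy, used throughout, is to carry out the argument with $\lambda$ replaced by its restriction to each compactum $\mathcal{C}_{n}$ of Lemma \ref{toptrick}, on which $q$ is bounded, and to reassemble using that $\lambda$ charges no Borel set disjoint from $\bigcup_{n}\mathcal{C}_{n}$; the remainder is bookkeeping with dominated convergence and Cauchy--Schwarz.
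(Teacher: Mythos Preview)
Your approach is genuinely different from the paper's, and the difference is instructive. The paper expands $q$, not $p$: writing $q(x,y)=\sum_{k}q_{k}(x)\overline{q_{k}(y)}$ on $\mathrm{Supp}(\lambda)$ (separability coming from Lemma \ref{toptrick}, using that $p$ SPD makes $p(x,x)-p(x,y)-p(y,x)+p(y,y)$ a metrizable CND kernel), one gets $\int\!\!\int pq\,d\lambda\,d\overline{\lambda}=\sum_{k}\int\!\!\int p(x,y)\,d(q_{k}\lambda)(x)\,d\overline{(q_{k}\lambda)}(y)$, and the SPD/universal/ISPD hypothesis on $p$ is applied \emph{directly} to each measure $q_{k}\lambda$ to conclude $q_{k}\lambda=0$ for every $k$, whence $\int_{A}\!\int_{A}q\,d\lambda\,d\overline{\lambda}=\sum_{k}\bigl|\int_{A}q_{k}\,d\lambda\bigr|^{2}=0$. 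No density-plus-closedness argument is needed, and no operator $T$ has to be bounded. Your route---expanding $p$, then arguing that the closed subspace $V=\ker T\subset L^{1}(|\lambda|)$ contains a dense set---is more functional-analytic and arguably more conceptual, but it purchases this by requiring $T$ to be a bounded operator, i.e.\ $\sqrt{q(x,x)}$ bounded on $\mathrm{Supp}(\lambda)$.

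That is exactly where your argument has a gap. In the ISPD case $q$ is not assumed bounded, and your proposed patch---replace $\lambda$ by $\lambda|_{\mathcal{C}_{n}}$ and reassemble---does not go through as stated: from $\langle\!\langle\lambda,\lambda\rangle\!\rangle_{p\odot q}=0$ one cannot conclude $\langle\!\langle\lambda|_{\mathcal{C}_{n}},\lambda|_{\mathcal{C}_{n}}\rangle\!\rangle_{p\odot q}=0$, since the null space of a positive semidefinite form is not stable under restriction of a measure to a subset. The paper's expansion of $q$ avoids this entirely, because the conclusion ``$q_{k}\lambda=0$ as a measure'' is already global and restricts trivially. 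Your approach \emph{can} be repaired, but not by restriction to compacta: rather, from $T\psi_{i}=0$ one has $\int\psi_{i}(x)q(x,y)\,d\lambda(x)=0$ for every $y$; setting $\mu_{y}:=q(\cdot,y)\,d\lambda$ and using that $\{\psi_{i}\}$ is dense in $\mathcal{H}_{p}$ with $\mathcal{H}_{p}\hookrightarrow L^{1}(|\mu_{y}|)$ continuously (here $p$ is bounded), the ISPD hypothesis on $p$ gives $\mu_{y}=0$, i.e.\ $\int_{A}q(x,y)\,d\lambda(x)=0$ for all $A,y$, hence $\int_{A}\!\int_{A}q\,d\lambda\,d\overline{\lambda}=0$. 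This is essentially the paper's argument in disguise. For the ``in particular'' clause your route via $q_{x}=0$ a.e.\ is slicker than the paper's covering argument with local positivity of $q$ and Hahn decomposition---provided, again, that $T$ is well defined on the relevant functions.
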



\begin{proof}  The proof for the three cases are identical, so we only focus on the $\mathcal{M}(X)$ case. Let $\lambda \in  \mathcal{M}(X)$ be such that
\begin{equation}\label{Schurmulgen1eq1}
\int_{X}\int_{X}p(x,y)q(x,y)d \lambda(x)\overline{\lambda}(y)=0. 
\end{equation}	
The continuous conditionally negative definite kernel 
$$
(x,y) \in X \times X \to p(x,x) -p(x,y) - p(y,x)+ p(y,y) \in \mathbb{R} 
$$
is metrizable. Because of that, Lemma \ref{toptrick} implies that the kernel $q$ can be written as $q(x,y)= \sum_{k \in \mathbb{N}}q_{k}(x)\overline{q_{k}(y)}$ for $x,y \in Z:=Supp(\lambda)$, and then Equation \ref{Schurmulgen1eq1}  is equivalent to 
$$
\int_{Z}\int_{Z}p(x,y)q_{k}(x) \overline{q_{k}(y)}d \lambda(x)\overline{\lambda}(y)=0, \quad k \in \mathbb{N}
$$
but the kernel $p$ is ISPD, so the previous relation is equivalent to the measures $ q_{k}d\lambda$ (note that $ q_{k}d\lambda$ belongs to the same  space of measures as $\lambda$) being zero for every $k \in \mathbb{N}$. Using once again the series representation for $q$, all the measures  $ q_{k}d\lambda$ are zero if and only if 
$$
\sum_{k \in \mathbb{N}}\left |\int_{A}q_{k}(x)d\lambda(x)\right |^{2}=  \int_{A}\int_{A}q(x,y)d \lambda(x)\overline{\lambda}(y)=0,  \quad A \in \mathscr{B}(X),
$$
which proves our claim.\\
Now, suppose in addition that $q(x,x)>0$ for every $x \in X$. In this case by the continuity of the function $q$,  for every $z \in X$  there exists an open  set $U_{z}$ that contains $z$ for which $q(x,y)>0$ for every $x,y \in U_{z}$. If $X^{+, Re}$, $X^{-, Re}$, $X^{+,Im}$, $X^{-, Im}$ is a Hahn decomposition of the set $X$ by the measure $\lambda$, then $ U_{z}\cap X^{+, Re} \in \mathscr{B}(X)$ and
$$
\int_{U_{z}\cap X^{+, Re}}\int_{U_{z}\cap X^{+, Re}}q(x,y)d \lambda(x)\overline{\lambda}(y)=0.
$$
But the integrand is a positive function and the measure $\lambda$ is nonnegative on $X^{+, Re}\cap U_{z}$, which implies that this double integral is zero if and only if the measure $\lambda$ is the zero measure on the set $X^{+, Re}\cap U_{z}$. Suppose by an absurd that the measure $\lambda^{+, Re}$ is nonzero and let  $\mathcal{C}$ be an arbitrary compact  set on $X$. Then there exists a finite set $z_{1}, \ldots , z_{n} \in X$ for which $\mathcal{C} \subset \bigcup_{k =1}^{n}U_{z_{k}}$. Note that
$$
\lambda^{+, Re}(\mathcal{C})= \lambda^{+, Re}(\mathcal{C}\cap X^{+, Re})\leq \lambda^{+, Re}((\bigcup_{k =1}^{n}U_{z_{k}})\cap X^{+, Re})\leq \sum_{k =1}^{n}\lambda^{+, Re}(U_{z_{k}}\cap X^{+, Re})=0,  
$$
which is an absurd by the inner regularity of $\lambda^{+, Re}$. Conversely, if $p\odot q$ is ISPD, then we must have that $[p\odot q](x,x)>0$ for every $x \in X$, and then  $q(x,x)>0$ for every $x \in X$.
\end{proof}

Now we focus on the Hadamard product. Before that we prove a measure theoretical Lemma that will simplify the arguments.

\begin{lem}\label{measuprodprojection} Let $X, Z$ be Hausdorff spaces and $\lambda \in \mathcal{M}_{\delta}(X\times Z)$, $\mathcal{M}_{c}(X\times Z)$, $\mathcal{M}(X\times Z)$. If  the function $\phi: X \times Z \to \mathbb{C}$ is  bounded and continuous  then  the function
$$
A \in \mathscr{B}(X) \to \lambda_{\phi}(A):= \int_{X \times Z} \chi_{A}(x)\phi(x,y)d\lambda(x,y) \in \mathbb{C} 
$$  
is a finite measure on  $\mathcal{M}_{\delta}(X)$, $\mathcal{M}_{c}(X)$, $\mathcal{M}(X)$. \end{lem}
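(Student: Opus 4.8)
The plan is to verify directly that $\lambda_{\phi}$ is a finite complex Borel measure on $X$ which inherits its regularity, and (in the discrete and compactly supported cases) its structural type, from $\lambda$ through the canonical projection $\pi_{X}: X\times Z\to X$. First I would record that $\lambda_{\phi}$ is well defined: for $A\in\mathscr{B}(X)$ the function $\chi_{A}\circ\pi_{X}=\chi_{A\times Z}$ is Borel measurable on $X\times Z$, and $\phi$ is bounded and continuous, hence Borel measurable, so $\chi_{A}(x)\phi(x,y)$ is bounded Borel measurable and therefore $\lambda$-integrable since $\lambda$ is finite, with $|\lambda_{\phi}(A)|\le\|\phi\|_{\infty}\,|\lambda|(X\times Z)<\infty$. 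Countable additivity follows from the dominated convergence theorem: for a disjoint family $(A_{n})_{n}$ in $\mathscr{B}(X)$ with union $A$, the partial sums $\sum_{n=1}^{N}\chi_{A_{n}}(x)\phi(x,y)$ converge pointwise to $\chi_{A}(x)\phi(x,y)$ and are dominated by the $|\lambda|$-integrable constant $\|\phi\|_{\infty}$, whence $\lambda_{\phi}(A)=\sum_{n}\lambda_{\phi}(A_{n})$. So $\lambda_{\phi}$ is a finite complex Borel measure, and summing $|\lambda_{\phi}(E')|\le\int_{E'\times Z}|\phi|\,d|\lambda|\le\|\phi\|_{\infty}|\lambda|(E'\times Z)$ over countable partitions of a Borel set $E$ gives $|\lambda_{\phi}|(E)\le\|\phi\|_{\infty}\,\nu(E)$, where $\nu:=(\pi_{X})_{*}|\lambda|$, i.e. $\nu(E)=|\lambda|(E\times Z)$.

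The core of the proof is that $\nu$, and hence $|\lambda_{\phi}|$, is a Radon measure. Since $|\lambda|$ is inner regular, given $E\in\mathscr{B}(X)$ and $\epsilon>0$ there is a compact $K\subseteq E\times Z$ with $|\lambda|((E\times Z)\setminus K)<\epsilon$; then $L:=\pi_{X}(K)$ is compact as a continuous image of a compact set, $L\subseteq E$, and $K\subseteq L\times Z=\pi_{X}^{-1}(L)$, so $\nu(E)-\nu(L)=|\lambda|((E\times Z)\setminus\pi_{X}^{-1}(L))\le|\lambda|((E\times Z)\setminus K)<\epsilon$. Thus $\nu$ is inner regular by compact sets; being finite, it is also outer regular, by passing to complements (an inner compact—hence closed—approximation $K$ of $X\setminus E$ yields the open outer approximation $X\setminus K\supseteq E$). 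Finally, $|\lambda_{\phi}|$ is finite and $|\lambda_{\phi}|\le\|\phi\|_{\infty}\nu$ with $\|\phi\|_{\infty}\nu$ a finite Radon measure, so the same argument applied to $|\lambda_{\phi}|$ (inner: approximate $E$; outer: approximate $X\setminus E$) shows $|\lambda_{\phi}|$ is inner and outer regular. Hence $\lambda_{\phi}\in\mathcal{M}(X)$.

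It remains to treat the three classes. If $\lambda=\sum_{k}c_{k}\delta_{(x_{k},z_{k})}\in\mathcal{M}_{\delta}(X\times Z)$, then $\lambda_{\phi}=\sum_{k}c_{k}\phi(x_{k},z_{k})\,\delta_{x_{k}}$, which, after combining the coefficients of repeated points, lies in $\mathcal{M}_{\delta}(X)$. If $\lambda\in\mathcal{M}_{c}(X\times Z)$ with (compact) support $\mathcal{C}$, then for any Borel $A\subseteq X$ disjoint from the compact set $\pi_{X}(\mathcal{C})$ one has $(A\times Z)\cap\mathcal{C}=\emptyset$, so $|\lambda|(A\times Z)=0$ and $\lambda_{\phi}(A)=\int_{A\times Z}\phi\,d\lambda=0$; therefore $Supp(\lambda_{\phi})\subseteq\pi_{X}(\mathcal{C})$ is compact and $\lambda_{\phi}\in\mathcal{M}_{c}(X)$. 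The general case $\lambda\in\mathcal{M}(X\times Z)$ was handled above. The only genuine obstacle I expect is the transfer of inner regularity across $\pi_{X}$: everything hinges on compact sets mapping to compact sets and on finiteness of the measures, which is precisely what upgrades "dominated by a Radon measure" to "Radon"; the additivity and boundedness are routine applications of dominated convergence.
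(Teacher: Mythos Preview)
Your argument is correct. Both you and the paper reduce the Radon property of $\lambda_{\phi}$ to that of the pushforward $\nu=(\pi_{X})_{*}|\lambda|$ and obtain inner regularity of $\nu$ by the same projection trick (take a compact $K\subseteq E\times Z$ approximating from inside and use that $\pi_{X}(K)$ is compact with $K\subseteq\pi_{X}(K)\times Z$). The two proofs diverge in how they pass from $\nu$ to $\lambda_{\phi}$ and in how they get outer regularity. The paper first splits $\phi$ and $\lambda$ into real/imaginary and positive/negative parts (sixteen nonnegative pieces) so it can work with a single nonnegative measure and the pointwise bound $|\lambda_{\phi}(B)-\lambda_{\phi}(A)|\le\|\phi\|_{\infty}\bigl(\nu(B)-\nu(A)\bigr)$ for $A\subseteq B$; it then proves outer regularity of $\nu$ by a direct tube-type construction (approximate $X\times Z$ by a product of compacts, enlarge $E\times\mathcal{C}_{2}$ to an open $U$, and extract an open $V\supseteq E$ from slices). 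You instead control $|\lambda_{\phi}|$ globally by the single inequality $|\lambda_{\phi}|\le\|\phi\|_{\infty}\nu$ obtained from partitions, and you replace the tube argument by the observation that for a \emph{finite} measure on a Hausdorff space, inner regularity by compact sets already yields outer regularity via complements (compact $\Rightarrow$ closed in Hausdorff spaces, so $X\setminus K$ is open). Your route is shorter and avoids the sixteen-fold decomposition; the paper's route is more hands-on and makes the open approximating set explicit. Both are equally valid here, and your treatment of the $\mathcal{M}_{\delta}$ and $\mathcal{M}_{c}$ cases (which the paper leaves as ``similar'') is a welcome addition.
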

\begin{proof}We focus the arguments on the $\mathcal{M}(X \times Z)$ case, being the others similar.\\
The fact that $\lambda_{\phi}$ is a measure is obtained by a direct application  of the  dominated convergence theorem.\\
Note that $\lambda_{\phi}$ is the linear combination of $16$ measures (for instance, $ \chi_{A}\phi^{+, Re}d\lambda^{+, Re}$), so we can assume that $\lambda$ is a nonnegative measure and $\phi$ is a nonnegative function. In particular,  if $A \subset B$ then
\begin{align*}
|\lambda_{\phi}(B) - \lambda_{\phi}(A)|&=  \int_{X \times Z}(\chi_{B}(x) - \chi_{A}(x))\phi(x,y)d\lambda(x,y)\\
&\leq sup_{(x,y) \in X \times Z}|\phi(x,y)|(\lambda(B\times Z) - \lambda(A\times Z))
\end{align*}
and we prove the outer and inner regularity of $\phi_{\lambda}$ by showing that the finite measure $A \in \mathscr{B}(X) \to \lambda(A\times Z) $ is inner and outer regular.\\
$\circ$(Inner regular) Let $\epsilon >0$ and  $E \in \mathscr{B}(X)$. By the inner regularity of $\lambda$ on the set $E \times Z$ there exists a compact set   $\mathcal{C} \subset E \times Z$  for which $\lambda(E \times Z) - \lambda(\mathcal{C})< \epsilon$. The compact set $C:=\pi_{1}(\mathcal{C}) \subset X$ (projection on the first variable) is such that $ \mathcal{C}\subset  C \times Z \subset E \times Z $, and then $ \lambda(E \times Z) - \lambda(C \times Z) < \epsilon$.\\
$\circ$ (Outer regular) Let $\epsilon >0$, $E \in \mathscr{B}(X)$. By the inner regularity of $\lambda$ there exists  compact sets $\mathcal{C}_{1}$ of $X$ and $\mathcal{C}_{2}$ of $Z$ for which  $\lambda(X \times Z ) - \lambda(\mathcal{C}_{1}\times \mathcal{C}_{2}) < \epsilon$.  By the outer regularity of $\lambda$ there exists an  open set $U$ that contains $E \times \mathcal{C}_{2}$  and $\lambda(U) - \lambda(E \times \mathcal{C})< \epsilon$. For every $x \in E$ there exists an open set $U_{x}$  on $X$ that contains $x$ for which $U_{x}\times \mathcal{C}_{2} \subset U$, because the set $\{x\}\times \mathcal{C}_{2}$ is compact. Define the open set $V = \bigcup_{x \in E}U_{x}$ of $X$ and note that $E \times \mathcal{C}_{2} \subset V \times\mathcal{C}_{2} \subset U$ and
\begin{align*}
0\leq \lambda(V\times Z) - \lambda(E\times Z) &< 2\epsilon +   \lambda((V\times Z) \cap (\mathcal{C}_{1}\times \mathcal{C}_{2}) )  - \lambda((E\times Z)\cap (\mathcal{C}_{1}\times \mathcal{C}_{2}))\\
&<   2\epsilon +   \lambda((V\cap \mathcal{C}_{1})\times \mathcal{C}_{2})  - \lambda((E\cap \mathcal{C}_{1})\times \mathcal{C}_{2})\\
& < 4\epsilon + \lambda(V\times \mathcal{C}_{2})  - \lambda(E\times \mathcal{C}_{2}) < 5\epsilon. 
\end{align*}
\end{proof}

On the next Lemma we use an equivalent condition for a positive definite kernel $K: X \times X \to \mathbb{C}$ to be SPD/Universal/ISPD, which occurs if and only if the only measure \\ $ \lambda \in \mathcal{M}_{\delta}(X )/ \mathcal{M}_{c}(X )/\mathcal{M}(X )$ for which
$$
\int_{X}K(x,y)d\lambda(x)=0, \quad y \in X
$$
is the zero measure  \cite{micchelli2006universal}, \cite{Sriperumbudur3}.
\begin{lem}\label{Kroengen1}Let $p : X \times X\to \mathbb{C}$ and $q:Z \times Z \to \mathbb{C} $ be bounded  positive definite  continuous kernels. Suppose that the  kernel $p$ is SPD/universal/ISPD, then a  measure  $\lambda \in \mathcal{M}_{\delta}(X \times Z)/ \mathcal{M}_{c}(X \times Z)/ \mathcal{M}(X \times Z)$ (respectively) satisfies 
$$
\int_{X\times Z}\int_{X\times Z}p(x,y)q(u,v)d\lambda(x,u)d\overline{\lambda}(y,v)=0 
$$
if and only if 
	$$
	\int_{Z}\int_{Z}q(u,v)d \lambda_{A}(u)d\overline{\lambda_{A}}(v)=0, 
	$$
	 for every  $A \in \mathcal{B}(X)$ where $\lambda_{A}$ is the measure in $ \mathcal{M}_{\delta}(Z), \mathcal{M}_{c}( Z), \mathcal{M}( Z)$ (respectively) for which $B \in \mathscr{B}(Z) \to \lambda_{A}(B):=\lambda(A\times B)$. In particular, the kernel $p\otimes q$ is SPD/universal/ISPD if and only if the only  the same occur with the kernels $p,q$. 
\end{lem}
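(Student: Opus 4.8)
The plan is to follow the template of the proof of Lemma~\ref{Schurmulgen1}, but exploiting that here the two kernels live on different spaces and that the slice measures $\lambda_{A}$ are already built into the statement. In all three cases $p$ is at least SPD, so the continuous conditionally negative definite kernel $(x,y)\mapsto p(x,x)-p(x,y)-p(y,x)+p(y,y)$ on $X$ is metrizable. Let $\lambda_{1}$ denote the image measure of $\lambda$ under the projection $\pi_{1}:X\times Z\to X$; it belongs to $\mathcal{M}_{\delta}(X)$, $\mathcal{M}_{c}(X)$ or $\mathcal{M}(X)$ accordingly, and $\pi_{1}(Supp(\lambda))\subset Supp(\lambda_{1})$. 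By Lemma~\ref{techequivmetricmercercompact} in the compact-support case and Lemma~\ref{toptrick} in general (the discrete case being immediate) the RKHS of $p$ is separable once restricted to $Supp(\lambda_{1})$, giving a countable Mercer-type expansion $p(x,y)=\sum_{k}p_{k}(x)\overline{p_{k}(y)}$ valid for $x,y\in Supp(\lambda_{1})$, with each $p_{k}$ continuous and $\sum_{k}|p_{k}(x)|^{2}=p(x,x)\le M:=\sup p(x,x)<\infty$.

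Next I introduce two families of auxiliary measures, both produced by Lemma~\ref{measuprodprojection}: for each $k$, the bounded continuous function $(x,u)\mapsto p_{k}(x)$ gives $\mu_{k}(B):=\int_{X\times Z}\chi_{B}(u)p_{k}(x)\,d\lambda(x,u)$ in the appropriate class on $Z$; and for each $v\in Z$, the bounded continuous function $(x,u)\mapsto q(u,v)$ gives a measure $\rho_{v}$ on $X$ with $\rho_{v}(A)=\int_{X\times Z}\chi_{A}(x)q(u,v)\,d\lambda(x,u)=\int_{Z}q(u,v)\,d\lambda_{A}(u)$, which is carried by $Supp(\lambda_{1})$. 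Feeding the expansion of $p$ into the double integral and using dominated convergence (legitimate since $|p_{k}(x)\overline{p_{k}(y)}|\le\sqrt{p(x,x)p(y,y)}$ and $q$ is bounded) yields the two identities
$$
\int_{X\times Z}\int_{X\times Z}p(x,y)q(u,v)\,d\lambda(x,u)\,d\overline{\lambda}(y,v)=\sum_{k}\int_{Z}\int_{Z}q(u,v)\,d\mu_{k}(u)\,d\overline{\mu_{k}}(v),\qquad \int_{X}p_{k}\,d\rho_{v}=\int_{Z}q(\cdot,v)\,d\mu_{k}.
$$

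With these, both directions are short. If the left-hand double integral vanishes then each summand does, and since $q$ is positive definite this forces $\int_{Z}q(u,v)\,d\mu_{k}(u)=0$ for all $k,v$, hence $\int_{X}p_{k}\,d\rho_{v}=0$ for all $k$; since $\rho_{v}$ is carried by $Supp(\lambda_{1})$, the expansion of $p$ gives $\int_{X}\int_{X}p\,d\rho_{v}\,d\overline{\rho_{v}}=\sum_{k}|\int_{X}p_{k}\,d\rho_{v}|^{2}=0$, so the SPD/universal/ISPD property of $p$ applied to $\rho_{v}$ (which lies in the corresponding class) gives $\rho_{v}=0$, i.e.\ $\int_{Z}q(u,v)\,d\lambda_{A}(u)=0$ for every $v$ and every $A\in\mathscr{B}(X)$; integrating against $\overline{\lambda_{A}}$ gives the stated identity. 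Conversely, if $\int_{Z}\int_{Z}q\,d\lambda_{A}\,d\overline{\lambda_{A}}=0$ for every $A$, positive definiteness of $q$ gives $\int_{Z}q(u,v)\,d\lambda_{A}(u)=0$ for all $v,A$, hence $\int_{X\times Z}s(x)q(u,v)\,d\lambda(x,u)=0$ for every simple $s$ on $X$; approximating the bounded function $p_{k}$ by simple functions in $L^{1}$ of the finite measure $A\mapsto|\lambda|(A\times Z)$ and using $|q|\le\|q\|_{\infty}$ gives $\int_{Z}q(\cdot,v)\,d\mu_{k}=0$, so every summand above is $0$. For the final assertion: if $p$ and $q$ are both SPD/universal/ISPD and the left-hand double integral is $0$, the equivalence gives $\int_{Z}\int_{Z}q\,d\lambda_{A}\,d\overline{\lambda_{A}}=0$ for all $A$, hence $\lambda_{A}=0$ for all $A$, i.e.\ $\lambda(A\times B)=0$ for all Borel $A,B$, so $\lambda=0$; conversely $p\otimes q$ being SPD/universal/ISPD forces $p(x,x)q(z,z)>0$ everywhere, and testing $p\otimes q$ against measures concentrated on a slice $X\times\{z_{0}\}$ (resp.\ $\{x_{0}\}\times Z$) recovers the property for $p$ (resp.\ $q$).

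The hard part will be the measure-theoretic bookkeeping rather than any single estimate: ensuring the Mercer expansion of $p$ is genuinely a countable sum (the reason for descending to $Supp(\lambda_{1})$ and invoking metrizability of the associated conditionally negative definite kernel), and checking that the auxiliary objects $\mu_{k}$, $\rho_{v}$, $\lambda_{A}$ all live in the correct class and that bounded continuous functions may be integrated out against $\lambda$ as claimed — exactly the content of Lemma~\ref{measuprodprojection}. The conceptual point, by contrast, is simply that the slice measures $\lambda_{A}$ and the measures $\rho_{v}$ are two descriptions of the same data.
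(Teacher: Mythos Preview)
Your argument is correct in outline, but it takes a genuinely more elaborate route than the paper. The paper invokes, immediately before this lemma, the equivalent \emph{linear} characterization: $K$ is SPD/universal/ISPD if and only if $\int_{X}K(x,y)\,d\lambda(x)=0$ for every $y$ forces $\lambda=0$. With this, the proof is essentially two lines: starting from $\int_{X\times Z}p(x,y)q(u,v)\,d\lambda(x,u)=0$ for all $(y,v)$, one fixes $v$, recognizes this as $\int_{X}p(x,y)\,d\rho_{v}(x)=0$ for all $y$ with exactly your measure $\rho_{v}$ (produced by Lemma~\ref{measuprodprojection}), and the assumed property of $p$ gives $\rho_{v}=0$; since $\rho_{v}(A)=\int_{Z}q(u,v)\,d\lambda_{A}(u)$, the claim follows. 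No Mercer expansion, no auxiliary measures $\mu_{k}$, no separability argument for $\mathcal{H}_{p}$.

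Your route instead diagonalizes $p$ via a Mercer-type series, which forces you to first descend to a set where the RKHS is separable and to justify term-by-term integration. This is the template of Lemma~\ref{Schurmulgen1}, where the expansion is of the \emph{other} kernel $q$; transplanting it here works, and it has the minor advantage that you argue both directions of the equivalence explicitly, while the paper leaves the converse implicit. One small repair: your claim $\pi_{1}(Supp(\lambda))\subset Supp(\lambda_{1})$ can fail for complex $\lambda$ by cancellation (take $\lambda=\delta_{(a,a)}-\delta_{(a,b)}$ on a two-point product). Use instead the first marginal of $|\lambda|$, which is again in the correct class by Lemma~\ref{measuprodprojection} with $\phi\equiv 1$, and for which the inclusion of supports does hold. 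Similarly, when you feed $p_{k}$ into Lemma~\ref{measuprodprojection} you should note that $p_{k}$ extends to a bounded continuous function on all of $X$ (any preimage in $\mathcal{H}_{p}$ works, bounded by $\|\cdot\|_{\mathcal{H}_{p}}\sqrt{M}$), since that lemma asks for $\phi$ defined on $X\times Z$.
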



\begin{proof}
The proof for the three cases are identical, so we only focus on the ISPD case. Let $\lambda \in   \mathcal{M}(X \times Z)$ be such that
	$$
	\int_{X \times Z}p(x,y)q(u,v)d \lambda(x,u)=0, 
	$$
for every $(y,v) \in X\times Z$. Since $p$ is an ISPD kernel and the measure
$$
A \in \mathscr{B}(X) \to  \int_{X \times Z} \chi_{A}(x)q(u,v)d \lambda(x,u) \in \mathbb{C} 
$$
is an element  of $\mathcal{M}(X)$ for every $v \in Z$ by Lemma \ref{measuprodprojection}, then this measure is the zero measure for every $v \in Z$. Note that the measure $\lambda_{A}$ is an element of  $\mathcal{M}(Z)$ (this is obtained from Lemma \ref{measuprodprojection} by reversing the roles of $X$ and $Z$ and taking $\phi$ as the constant one function) and 
$$
\int_{Z}q(u,v)d\lambda_{A}(u)= \int_{X\times Z}\chi_{A}(x)q(u,v)d\lambda(x,u)=0, \quad v \in Z. 
$$
In particular, if $q$ is an ISPD kernel, the measure $\lambda_{A}$ must be zero for every $A \in \mathscr{B}(X)$, which implies that $\lambda$ is the zero measure.\\
Conversely, if $p\otimes q$ is an ISPD kernel $p$ and $q$ must also be integrally positive definite  because 
$$
\{ \lambda_{1}\times \lambda_{2}, \quad \lambda_{1} \in  \mathcal{M}(X), \lambda_{2} \in \mathcal{M}(Z)\} \subset \mathcal{M}(X\times Z) 
$$
and Fubini-Tonelli Theorem.\end{proof}

Lastly, we prove a result that elucidates when a weighted sum of positive definite kernels is SPD/universal/ISPD. 

\begin{lem} \label{Hadamardintegral} Let $\Omega$ be a Hausdorff space, $\eta$ be a nonnegative $\sigma$-finite Radon measure on it and a family of bounded positive definite kernels $(p_{w})_{w \in \Omega}$ on $X$ such that $p: \Omega \times (X \times X ) \to \mathbb{C}$ is continuous. Suppose that the kernel
$$
(x,y) \in X \times X \to P(x,y):=\int_{\Omega}p_{w}(x,y)d\eta(w) \in \mathbb{C}
$$  
is well defined and continuous. Then, the kernel $P$ is positive definite and a  measure $\lambda \in \mathcal{M}_{\delta}(X)/\mathcal{M}_{c}(X)$ satisfy
$$
\int_{X}\int_{X}P(x,y)d\lambda(x)d\overline{\lambda}(y)=0
$$
if and only if
$$
\int_{X}\int_{X}p_{w}(x,y)d\lambda(x)d\overline{\lambda}(y)=0, \quad w \in Supp(\eta).
$$
If the kernel $P$ is bounded and the function $w \in \Omega \to \sup_{x \in X}p_{w}(x,x) \in \mathbb{R}$ is locally bounded, then  the same relation occur for $\lambda \in \mathcal{M}(X)$.
\end{lem}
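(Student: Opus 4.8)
The plan is to reduce the whole statement to properties of the single function
$$
I(w) := \int_{X}\int_{X} p_{w}(x,y)\,d\lambda(x)\,d\overline{\lambda}(y), \qquad w\in\Omega,
$$
namely: $I$ is well defined, nonnegative and continuous; the identity $\int_{X}\int_{X}P\,d\lambda\,d\overline{\lambda}=\int_{\Omega}I(w)\,d\eta(w)$ holds; and a nonnegative continuous function whose integral against $\eta$ vanishes must vanish on $Supp(\eta)$. Positive definiteness of $P$ is the trivial case $\lambda\in\mathcal{M}_{\delta}(X)$: interchanging the finite sum with the integral, $\sum_{\mu,\nu}c_{\mu}\overline{c_{\nu}}P(x_{\mu},x_{\nu})=\int_{\Omega}\big(\sum_{\mu,\nu}c_{\mu}\overline{c_{\nu}}p_{w}(x_{\mu},x_{\nu})\big)\,d\eta(w)\ge 0$, since each $p_{w}$ is positive definite and $\eta$ is nonnegative.

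For the Fubini identity I would first record that, $p$ being jointly continuous, each $p_{w}$ is a continuous positive definite kernel, so $\mathcal{H}_{p_{w}}\subset C(X)$ by Proposition \ref{rkhscontained} and hence $I(w)\ge 0$ for every $w$ when $\lambda\in\mathcal{M}_{c}(X)$ (and trivially when $\lambda\in\mathcal{M}_{\delta}(X)$). From $|p_{w}(x,y)|\le\sqrt{p_{w}(x,x)}\sqrt{p_{w}(y,y)}$ and the Cauchy--Schwarz inequality in $L^{2}(\eta)$ one gets $\int_{\Omega}|p_{w}(x,y)|\,d\eta(w)\le\sqrt{P(x,x)}\sqrt{P(y,y)}$. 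If $\lambda$ is carried by a compact set $\mathcal{C}$, then $P$ (being continuous) is bounded on $\mathcal{C}$, so $\int_{\mathcal{C}}\int_{\mathcal{C}}\int_{\Omega}|p_{w}(x,y)|\,d\eta(w)\,d|\lambda|(x)\,d|\lambda|(y)<\infty$ and Tonelli and Fubini yield $\int_{X}\int_{X}P\,d\lambda\,d\overline{\lambda}=\int_{\Omega}I(w)\,d\eta(w)$; the discrete case is immediate. In the final assertion, where $\lambda\in\mathcal{M}(X)$ is arbitrary, the hypothesis that $P$ is bounded on $X$ replaces the compactness of the support: $\int_{X}\int_{X}\sqrt{P(x,x)}\sqrt{P(y,y)}\,d|\lambda|\,d|\lambda|\le(\sup_{x}P(x,x))\,\|\lambda\|^{2}<\infty$, and the same computation applies.

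The next step is the continuity of $I$. Fix $w_{0}$ and a compact neighbourhood $N$ of it, and let $w_{n}\to w_{0}$ in $N$. When $\lambda\in\mathcal{M}_{c}(X)$ is supported on $\mathcal{C}$, joint continuity of $p$ makes it bounded on the compact set $N\times(\mathcal{C}\times\mathcal{C})$, so $p_{w_{n}}(x,y)\to p_{w_{0}}(x,y)$ with a uniform dominating constant and dominated convergence gives $I(w_{n})\to I(w_{0})$. When $\lambda\in\mathcal{M}(X)$ one instead bounds $|p_{w}(x,y)|\le\sup_{x}p_{w}(x,x)$, and the hypothesis that $w\mapsto\sup_{x}p_{w}(x,x)$ is locally bounded produces a constant $M$ with $|p_{w}(x,y)|\le M$ for all $w\in N$ and all $x,y$; since $M\in L^{1}(|\lambda|\times|\lambda|)$, dominated convergence again applies. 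This is exactly where the extra hypothesis of the last assertion enters, and it is the only delicate point of the argument — in the compactly supported case it comes for free.

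Finally, $\int_{X}\int_{X}P\,d\lambda\,d\overline{\lambda}=\int_{\Omega}I\,d\eta$ with $I\ge 0$ continuous. If the left side is zero then $I=0$ $\eta$-a.e.; if $I(w_{0})>0$ for some $w_{0}\in Supp(\eta)$ the open set $\{I>0\}$ would be a neighbourhood of $w_{0}$, hence of positive $\eta$-measure, a contradiction, so $I\equiv 0$ on $Supp(\eta)$. Conversely, if $I$ vanishes on $Supp(\eta)$ then $\int_{\Omega}I\,d\eta=0$ because $\eta(\Omega\setminus Supp(\eta))=0$ (every compact subset of the complement is covered by finitely many $\eta$-null open sets, and then apply inner regularity). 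This establishes all the stated equivalences.
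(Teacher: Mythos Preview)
Your approach is the same as the paper's: define $I(w)=\int_X\int_X p_w\,d\lambda\,d\overline{\lambda}$, show it is nonnegative and continuous, interchange the order of integration via Fubini--Tonelli, and conclude from the fact that a nonnegative continuous function with zero $\eta$-integral vanishes on $Supp(\eta)$. Your Fubini bound via Cauchy--Schwarz $\int_\Omega|p_w(x,y)|\,d\eta\le\sqrt{P(x,x)P(y,y)}$ is equivalent to the paper's $2|p_w(x,y)|\le p_w(x,x)+p_w(y,y)$, and you spell out the converse direction (that $\eta(\Omega\setminus Supp(\eta))=0$ by inner regularity) more carefully than the paper does.

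There is, however, a small technical slip in your continuity argument: $\Omega$ is only assumed Hausdorff, not locally compact, so you cannot in general take a \emph{compact} neighbourhood $N$ of $w_0$, nor test continuity with sequences. For $\lambda\in\mathcal{M}_c(X)$ the paper avoids this by using the tube lemma directly: since $\{w_0\}\times\mathcal{C}\times\mathcal{C}$ is compact and $p$ is continuous, for every $\epsilon$ there is an open $U\ni w_0$ with $|p_w(x,y)-p_{w_0}(x,y)|<\epsilon$ for all $w\in U$ and $x,y\in\mathcal{C}$, whence $|I(w)-I(w_0)|\le\epsilon\|\lambda\|^2$. For $\lambda\in\mathcal{M}(X)$ your domination $|p_w(x,y)|\le\sup_x p_w(x,x)\le M$ only needs a (not necessarily compact) neighbourhood on which the locally bounded hypothesis gives the bound $M$; the paper then combines this with a compact $\mathcal{C}_\epsilon\subset X$ satisfying $|\lambda|(X\setminus\mathcal{C}_\epsilon)<\epsilon$ and the tube-lemma argument on $\mathcal{C}_\epsilon$. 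With these adjustments your proof is correct and matches the paper's.
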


\begin{proof}The fact that the kernel is positive definite is a consequence that each kernel $p_{w}$ is positive definite.\\
Now, let $\lambda \in \mathcal{M}_{c}(X)$, since the function $P$ is continuous it must be bounded on $Supp(\lambda)$.  By Fubini-Tonelli we can change the order of integration
$$
\int_{X}\int_{X}P(x,y)d\lambda(x)d\overline{\lambda}(y)= \int_{\Omega}\left [ \int_{X}\int_{X}p_{w}(x,y)d\lambda(x)d\overline{\lambda}(y) \right ]d\eta(w),
$$
 because $2|p_{w}(x,y)| \leq p_{w}(x,x) + p_{w}(y,y)$ and then $p \in L^{1}(\eta \times |\lambda| \times |\lambda|)$. The result we aim is a direct consequence that the function  
$$
w \in \Omega \to P_{\lambda}(w):=\int_{X}\int_{X}p_{w}(x,y)d\lambda(x)d\overline{\lambda}(y) \in \mathbb{R}
$$
 is continuous and nonnegative. Indeed, it is nonnegative because the kernel $p_{w}$ is positive definite. For the continuity, since $\{w\} \times Supp(\lambda)\times Supp(\lambda)$ is  a compact set   and $p$ is continuous, for every $\epsilon >0$ there exists  an open neighborhood $U_{w}$ of $w$ for which $|p_{w}(x,y) - p_{w^{\prime}}(x,y)| < \epsilon$ for all  $x, y \in Supp(\lambda)$  and $w^{\prime} \in U_{w}$. So $|P_{\lambda}(w)- P_{\lambda}(w^{\prime})| \leq \epsilon (|\lambda|(X))^{2}$ which proves our claim.\\
 If $P$ is bounded and the function $w \in \Omega \to \sup_{x \in X}p_{w}(x,x) \in \mathbb{R}$ is locally bounded, similar arguments can be used by replacing $Supp(\lambda)$ by a compact set $\mathcal{C}_{\epsilon}$ for which $|\lambda|(X) - |\lambda|(\mathcal{C}_{\epsilon}) < \epsilon$.\end{proof}

\subsection{Section \ref{Kernels on product of spaces}}

Recall the the formula

$$
	e^{-\|x\|^{2}/\sigma} = \frac{1}{2^{m}\pi^{m/2}} \sigma^{m/2}\int_{\mathbb{R}^{m}}e^{-ix \cdot \xi}
	e^{-\sigma\|\xi\|^{2}/4}d\xi, \quad x \in \mathbb{R}^{m} . 
	$$
Then
\begin{align*}
G_{A, \gamma}((u,x),(v,y))&=\frac{1}{2^{m}\pi^{m/2}} A(u,v)\gamma(u,v)^{m/2}\int_{\mathbb{R}^{m}}e^{-i(x-y) \cdot \xi}
	e^{-\gamma(u,v)\|\xi\|^{2}/4}d\xi\\
	&=\frac{1}{2^{m}\pi^{m/2}} C(u,v)\int_{\mathbb{R}^{m}}e^{-i(x-y) \cdot \xi}
	e^{-\gamma(u,v)\|\xi\|^{2}/4}d\xi
\end{align*}

\begin{proof}[\textbf{Proof of Theorem \ref{genGaussianuniversal}}]

The continuity of the kernel follows by its definition. It is positive definite because by the hypothesis, the kernel $C$ is positive definite and since the kernel  $\gamma$ is conditionally negative definite, the kernel $e^{-i(x-y) \cdot \xi}
	e^{-\gamma(u,v)\|\xi\|^{2}/4}$ is positive definite for every $\xi \in \mathbb{R}$. \\
If the kernel $G_{A, \gamma}$ is universal then it is SPD by definition. If the kernel $G_{A, \gamma}$ is SPD, then  for every $u \in X$ we have that $G_{A, \gamma}((u,0), (u,0))= A(u,u)>0$.\\ 
Now suppose that  $A(u,u)>0$ for every $u \in X$ . A measure  $\lambda \in \mathcal{M}_{c}(X\times \mathbb{R}^{m})$ is such that
$$
\int_{X\times \mathbb{R}^{m}}\int_{X\times \mathbb{R}^{m}}G_{A,\gamma}((u,x),(v,y))d\lambda(u,x)d\overline{\lambda}(v,y)=0 
$$
if and only if
\begin{equation}\label{genGaussianuniversaleq1}
    \int_{X\times \mathbb{R}^{m}}\int_{X\times \mathbb{R}^{m}}C(u,v)e^{-\|\xi\|^{2}\gamma(u,v)/4}e^{-i(x-y)\xi}d\lambda(u,x)d\overline{\lambda}(v,y)=0, \quad \xi \in \mathbb{R}^{m}.
\end{equation}
by Lemma \ref{Hadamardintegral}. When $\xi \neq 0$, by the hypothesis on the kernel $\gamma$, Theorem \ref{improvSchoen} and Lemma \ref{Schurmulgen1}  implies that  the kernel 
$$
(u,v) \in X \times X \to C(u,v)e^{-\|\xi\|^{2}\gamma(u,v)/4} \in \mathbb{C}
$$ 
is universal.  By Lemma \ref{Kroengen1}, we obtain that for every $A \in \mathscr{B}(X)$ it holds that
$$
0=\int_{ \mathbb{R}^{m}}\int_{ \mathbb{R}^{m}}e^{-i(x-y)\xi}d\lambda_{A}(x)d\overline{\lambda_{A}}(y) = |\widehat{\lambda_{A}}(\xi)|^{2}.
$$
for every $\xi \in \mathbb{R}^{m}\setminus{\{0\}}$. Since the only finite measure on $\mathcal{M}(\mathbb{R}^{m})$ that satisfies this relation is the zero measure, we must have that $\lambda(E\times B)=0$ for every $B \in \mathscr{B}(X)$ and $E \in \mathscr{B}(\mathbb{R}^{m})$, which implies that $\lambda$ is the zero measure and that the kernel $G_{A, \gamma}$ is universal.\\
\end{proof}

\begin{proof}[\textbf{Proof of Theorem \ref{genGaussianintegralllymetric}  }]
If $G_{A, \gamma}$ is ISPD, then  $A(u,u) >0$ for every $u \in X$ by  Theorem \ref{genGaussianuniversal}. \\
Conversely, since $|G_{A, \gamma}|\leq \sup_{u \in X} A(u,u)< \infty$,
$$
G_{A, \gamma}((u,x),(v,y))
=\frac{1}{2^{m}\pi^{m/2}} \int_{\mathbb{R}^{m}}C(u,v)e^{-i(x-y) \cdot \xi}e^{-\gamma(u,v)\|\xi\|^{2}/4}d\xi,
$$
 and $C$ is a bounded kernel, by Lemma \ref{Hadamardintegral} the kernel $G_{A, \gamma}$ is ISPD if and only if the only measure $\lambda  \in \mathcal{M}(X \times \mathbb{R}^{m})$ for which
$$
\int_{X\times \mathbb{R}^{m}}\int_{X\times \mathbb{R}^{m}}C(u,v)e^{-i(x-y) \cdot \xi}e^{-\gamma(u,v)\|\xi\|^{2}/4}d\lambda(u,x)d\overline{\lambda}(v,y)=0, \quad \xi \in \mathbb{R}^{m}
$$
is the zero measure. But, since $G_{t\gamma}$ is ISPD for every $t>0$,  Lemma \ref{Kroengen1}  together with the hypothesis that $C(u,u)>0$ for every $u \in X$ implies that 
$$
0=\int_{\mathbb{R}^{m}}\int_{ \mathbb{R}^{m}}e^{-i(x-y) \cdot \xi}d\lambda_{B}(x)d\overline{\lambda_{B}}(y)=|\widehat{\lambda_{B}}(\xi)|, \quad \xi \in \mathbb{R}^{m}\setminus{\{0\}}, \quad B \in \mathscr{B}(X). 
$$
Similar to the proof of Theorem \ref{genGaussianuniversal}, since the only measure on $\mathcal{M}(\mathbb{R}^{m})$ that satisfies this relation is the zero measure, we must have that $\lambda(E\times B)=0$ for every $B \in \mathscr{B}(X)$ and $E \in \mathscr{B}(\mathbb{R}^{m})$, which implies that $\lambda$ is the zero measure and that the kernel $G_{A, \gamma}$ is ISPD.\\
Now, we focus on the second relation. If $\mathcal{H}_{G_{A, \gamma}}\subset C_{0}(X\times \mathbb{R}^{m})$, then  by Proposition \ref{rkhscontained} for every $u \in X$ and $x \in \mathbb{R}^{m}$,   $G_{A, \gamma}((u,x)(u,x))= A(u,u)$ is a bounded function and $(G_{A, \gamma})_{(u,x)} \in C_{0}(X\times \mathbb{R}^{m})$, but then $A_{u}=(G_{A, \gamma})_{(u,x)}( \cdot, x)$ belongs to $C_{0}(X)$. By using Proposition \ref{rkhscontained} once again we have that $\mathcal{H}_{A} \subset C_{0}(X)$. \\
Conversely, suppose that $\mathcal{H}_{A} \subset C_{0}(X)$ and that $A$ is bounded by $1$, then for every $v \in X$ and  $\epsilon >0$ there exists a compact set $\mathcal{C}_{1,v} \subset X$ for which $|A(u,v)|< \epsilon$ for $u \in X \setminus \mathcal{C}_{1,v}$. Let $M = \sup_{z \in \mathcal{C}_{1,v}\cup\{v\}} \gamma(z,z)$, then $e^{-\|\xi\|^{2}/\gamma(u,v)}\leq e^{-\|\xi\|^{2}/M}$ for every $u \in \mathcal{C}_{1,v}$ and $\xi \in \mathbb{R}^{m}$, so if $y \in \mathbb{R}^{m}$ and  $\mathcal{C}_{2,y}$ is a compact set for which $e^{-\|x-y\|^{2}/M}< \epsilon$ for every $x \in \mathbb{R}^{m}\setminus \mathcal{C}_{2,y}$, the compact set $\mathcal{C} :=\mathcal{C}_{1,v}\times \mathcal{C}_{2,y}$ is such that
$|G_{A, \gamma}((u,x),(v,y))|< \epsilon$ for every $(u,x) \in X \times \mathbb{R}^{m} \setminus \mathcal{C}$, which concludes the argument.  \end{proof}

\begin{proof}[\textbf{Proof of Theorem \ref{secondCaponnetogeneralization1M22}}] Every  universal kernel  is strictly positive definite and  if the kernel $G_{A, \gamma}$ is strictly positive definite  then $G_{A, \gamma}(x,x)= A$ is a positive definite matrix. It only remains to prove that if the matrix $A$ is positive definite then the kernel $G_{A, \gamma}$ is universal. \\
Let  $\lambda_{1}, \ldots, \lambda_{\ell} \in \mathcal{M}_{c}(\mathbb{R}^{m})$ be complex valued  measures of compact support  for which
	$$
	\sum_{\mu, \nu=1}^{\ell}\int_{\mathbb{R}^{m}}\int_{\mathbb{R}^{m}}a_{\mu, \nu}e^{-\|x-y\|^{2}/\gamma_{\mu, \nu}}d\lambda_{\mu}(x) d\overline{\lambda_{\nu}}(y)=0. 
	$$
After a change on the order of integration  we can rewrite the previous equality as 
	$$
	\int_{\mathbb{R}^{m}}\sum_{\mu, \nu=1}^{\ell}c_{\mu, \nu}e^{-\gamma_{\mu, \nu}\|\xi\|^{2}/4}\widehat{\lambda_{\mu}}(\xi) \widehat{\overline{\lambda_{\nu}}}(\xi)d\xi=0. 
	$$
	Note that $\sum_{\mu, \nu=1}^{\ell}c_{\mu, \nu}e^{-\gamma_{\mu, \nu}\|\xi\|^{2}/4}\widehat{\lambda_{\mu}}(\xi) \widehat{\overline{\lambda_{\nu}}}(\xi) \geq 0$ for every $\xi \in \mathbb{R}^{m}$ and this function is continuous on the variable $\xi$, consequently we have that
	\begin{equation}\label{secondCaponnetogeneralization1M22conta}
	\sum_{\mu, \nu=1}^{\ell}c_{\mu, \nu}e^{-\gamma_{\mu, \nu}\|\xi\|^{2}/4}\widehat{\lambda_{\mu}}(\xi) \widehat{\overline{\lambda_{\nu}}}(\xi) =0, \quad \xi \in \mathbb{R}^{m}.
	\end{equation}
	 Since $\Gamma$ is conditionally negative definite with positive  coefficients, by equation \ref{condequa}  there exists $z_{1}, \ldots , z_{\ell} \in \mathbb{R}^{\ell}$ and $\varsigma_{1}, \ldots , \varsigma_{\ell} >0$ for which $\gamma_{\mu, \nu}= \|z_{\mu}- z_{\nu}\|^{2} + \varsigma_{\mu} + \varsigma_{\nu}$. If the points $z_{1}, \ldots , z_{\ell}$  are distinct, the  matrix $[c_{\mu, \nu}e^{-\gamma_{\mu, \nu}\|\xi\|^{2} /4}]_{\mu, \nu=1}^{\ell}$ is positive definite  and on this case the relation at Equation \ref{secondCaponnetogeneralization1M22conta} holds if and only if $\lambda_{\mu}$ is the zero measure for every $\mu$.\\
	 More generally, for arbitrary points $z_{1}, \ldots , z_{\ell}$  (not necessarily distinct),  consider the equivalence classes $F_{1}, \ldots ,F_{l} \subset \{1, \ldots , \ell \}$ for which  $F_{i}\cap F_{j} =\emptyset$ when $i \neq j $, $\cup_{i=1}^{l}F_{i}=\{1, \ldots , \ell \} $ and $\mu , \nu  \in F_{i}$ if and only if  $z_{\mu}= z_{\nu}$.  On this case the matrix $[e^{-\|z_{i}- z_{j}\|^{2}\|\xi\|^{2} /4}]_{i, j=1}^{l}$ is invertible and  the relation at Equation \ref{secondCaponnetogeneralization1M22conta} holds if and only if 
	\begin{equation}\label{secondCaponnetogeneralization1M22conta2}
	\sum_{\mu, \nu \in F_{i}}c_{\mu, \nu}e^{-(\varsigma_{\mu}+ \varsigma_{\nu})\|\xi\|^{2}/4}\widehat{\lambda_{\mu}}(\xi) \widehat{\overline{\lambda_{\nu}}}(\xi ) =0, \quad \xi \in \mathbb{R}^{m}, \quad 1 \leq i\leq l. 
	\end{equation}
	In order to simplify the notation, assume without loss of generalization that $F_{1}= \{1, \ldots , \ell  \}$.  Since the matrix $C$ is positive semidefinite, Equation \ref{secondCaponnetogeneralization1M22conta2} is equivalent at
	\begin{equation}\label{secondCaponnetogeneralization1M22conta2.5}
	[\widehat{\overline{\lambda_{\nu}}}(\xi )e^{-\varsigma_{\nu} \|\xi\|^{2}/4}]\sum_{\mu=1}^{\ell}c_{\mu, \nu}e^{-\varsigma_{\mu}\|\xi\|^{2}/4}\widehat{\lambda_{\mu}}(\xi)  =0, \quad \xi \in \mathbb{R}^{m}, \quad 1 \leq  \nu \leq l. 
	\end{equation}
	From now on we continue the proof of $(i)$ as if there are nonzero measures $\lambda_{\mu}$  that satisfy equation \ref{secondCaponnetogeneralization1M22conta2.5}, and we will obtain a contradiction. We may suppose without loss of generalization that all measures $\lambda_{\mu}$ are nonzero. By the Schwartz's Paley-Wiener theorem we know that the function $\widehat{\overline{\lambda_{\nu}}}(\xi )e^{-\varsigma_{\nu} \|\xi\|^{2}/4}$ is nonzero on a dense open set of $\mathbb{R}^{m}$, hence equation \ref{secondCaponnetogeneralization1M22conta2.5} is equivalent at
	\begin{equation}\label{secondCaponnetogeneralization1M22conta2.75}
	\sum_{\mu=1}^{\ell}c_{\mu, \nu}e^{-\varsigma_{\mu}\|\xi\|^{2}/4}\widehat{\lambda_{\mu}}(\xi)  =0, \quad \xi \in \mathbb{R}^{m}, \quad 1 \leq  \nu \leq l. 
	\end{equation}
	From the Fourier transform of equation \ref{secondCaponnetogeneralization1M22conta2.75}  at a point $-y \in \mathbb{R}^{m}$, we obtain that
	$$
	\sum_{\mu=1}^{\ell}a_{\mu, \nu}\int_{\mathbb{R}^{m}}e^{-\|x-y\|^{2}/ \varsigma_{\mu}}d\lambda_{\mu}(x) =0, \quad y \in \mathbb{R}^{m}, \quad 1 \leq  \nu \leq l.
	$$
	But then we reach the contradiction that all measures $\lambda_{\mu}$ are zero because the matrix $A$ is positive definite and the universality of the Gaussian kernel $G_{1/\varsigma_{\mu}}$ on $\mathbb{R}^{m}$.\\
	The proof of $(ii)$ is similar to the proof of $(i)$, up to Equation \ref{secondCaponnetogeneralization1M22conta2}. If the matrix $C_{i}: =[c_{\mu, \nu}]_{\mu, \nu \in F_{i}}$ is positive definite is immediate that all measures $\lambda_{\mu} \in \mathcal{M}(\mathbb{R}^{m})$ are zero. If the matrix $C_{i}$ is not positive definite, consider $v \in \mathbb{C}^{|F_{i}|} \setminus{\{0\}}$ for which $C_{i}v=0$ and a non zero function  $\phi \in C^{\infty}_{c}(\mathbb{R}^{m})$. Define $\lambda_{\mu} \in \mathcal{S}(\mathbb{R}^{m})$ (the space of Schwartz functions on $\mathbb{R}^{m}$) for which $\widehat{\lambda_{\mu}}(\xi)=v_{\mu}e^{\varsigma_{\mu}\|\xi\|^{2}/4 }\phi(\xi)$. Then if $v_{\mu} \neq 0 $  the finite Radon measure defined by the function $\lambda_{\mu}$ does not have compact support and 
	$$
	\sum_{\mu, \nu \in F_{i}}c_{\mu, \nu}e^{-(\varsigma_{\mu}+ \varsigma_{\nu})\|\xi\|^{2}/4}\widehat{\lambda_{\mu}}(\xi) \widehat{\overline{\lambda_{\nu}}}(\xi )= \sum_{\mu, \nu \in F_{i}}c_{\mu, \nu}v_{\mu}\overline{v_{\nu}}|\phi(\xi)|^{2}=0,
	$$
	for every $\xi \in \mathbb{R}^{m}$. We also emphasize that the relation $2\gamma_{\mu, \nu}= \gamma_{\mu, \mu} + \gamma_{\nu,\nu}$ occurs if and only if $z_{\mu}= z_{\nu}$.\end{proof}

\begin{proof}[\textbf{Proof of Lemma \ref{interaction}}] It is sufficient to prove the case of measures with compact support.\\
Let $\lambda_{1}, \ldots, \lambda_{\ell} \in \mathcal{M}(\mathbb{R}^{m})$ and  $0< \varsigma_{1} < \ldots< \varsigma_{\ell} $, such that
$$
\sum_{\mu=1}^{\ell} \int_{\mathbb{R}^{m}}e^{-\|x-y\|^{2}/\varsigma_{\mu}}d\lambda_{\mu}(x)=0, y \in \mathbb{R}^{m}.
$$

After a change in the order of integration, we have that
$$
\sum_{\mu=1}^{\ell}\varsigma_{\mu}^{m/2} \int_{\mathbb{R}^{m}}e^{iy\xi}e^{-\varsigma_{\mu}\|\xi\|^{2}/4}\widehat{\lambda_{\mu}}(\xi)d\xi=0, \quad y \in \mathbb{R}^{m}.
$$
Since the function $\sum_{\mu=1}^{\ell}\varsigma_{\mu}^{m/2}e^{-\varsigma_{\mu}\|\xi\|^{2}/4}\widehat{\lambda_{\mu}}(\xi)$ is a Schwartz function, the previous equality implies that this is the zero function. Then
	$$
	\widehat{\lambda_{1}}(\xi)=-\sum_{\mu=2}^{\ell} \frac{\varsigma_{\mu}^{m/2}}{\varsigma_{1}^{m/2}}e^{-(\varsigma_{\mu}-\varsigma_{1})\|\xi\|^{2}/4}\widehat{\lambda_{\mu}}(\xi).
	$$
	Since the right hand side of the previous equation is a Schwartz function, we obtain that $\lambda_{1} \in C_{c}^{\infty}(\mathbb{R}^{m})$, because the measure defined by it has compact support. Taking the Fourier transform on a point $-y \in \mathbb{R}^{m}$ we obtain that
	$$
	\lambda_{1}(y) =- \frac{\varsigma_{\mu}^{m/2}}{\varsigma_{1}^{m/2}(\varsigma_{\mu}-\varsigma_{1})^{m/2}}\sum_{\mu =2}^{\ell} \int_{\mathbb{R}^{m}}e^{-\|x-y \|^{2}/ (\varsigma_{\mu}-\varsigma_{1})}d\lambda_{\mu}(x),  
	$$
	so $\lambda_{1} \in \sum_{\mu=2}^{\ell}\mathcal{H}_{G_{[1/(\varsigma_{\mu}- \varsigma_{1})]}}$ (because the measure $\lambda_{\mu}$ is finite). By Theorem $3$ in \cite{minh2010some} $ \mathcal{H}_{G_{[1/(\varsigma_{\mu}- \varsigma_{1})]}} \subset \mathcal{H}_{G_{[1/(\varsigma_{2}- \varsigma_{1})]}} $ for every $\mu$ and then $\lambda_{1} \in \mathcal{H}_{G_{[1/(\varsigma_{2}- \varsigma_{1})]}} $. By the description of $\mathcal{H}_{G_{[1/(\varsigma_{2}- \varsigma_{1})]}} $  given in Theorem $1$ in  \cite{minh2010some}, we must have that $e^{1/(\varsigma_{2}- \varsigma_{1})\|y\|^{2}}\lambda_{1}(y)$ admits a power series expansion in $\mathbb{R}^{m}$,  but the function $\lambda_{1}$ has compact support, this implies that $\lambda_{1}$ must be the zero function. The rest of the proof follows by an induction argument. \end{proof}

Next Lemma is focused on the analysis on the kernel $\gamma(u,v)^{-1}$ on a broader context.

\begin{lem}\label{gammakernel} Let  $\gamma:X \times X \to (0, \infty)$ be a  continuous conditionally negative definite kernel and $\nu_{1}, \ldots , \nu_{\ell} \in (0,\infty)$. Consider  the matrix valued kernel 
$$
(u,v) \in X \times X \to \left [\frac{\Gamma(\nu_{i} + \nu_{j})}{\gamma(u,v)^{\nu_{i} + \nu_{j}}}\right ]_{i,j=1}^{\ell} \in M_{\ell}(\mathbb{C})
$$
\begin{enumerate}
\item[(i)] The matrix valued kernel is SPD if and only if $$
\{(i,j,u,v),  \gamma(u, v) = \gamma(u, u)=\gamma(v, v) \text{ and } \nu_{i}= \nu_{j} \}= \{(i,i,u,u),  1 \leq i \leq \ell, u \in X \}.
$$
\item[(ii)]If the kernel $\gamma$ is metrizable, then the matrix valued kernel is universal if and only if the $\nu_{i}$ are distinct.
\item[(iii)]If the kernel $\gamma$ is metrizable, then  the matrix valued kernel is ISPD if and only if the $\nu_{i}$ are distinct and  $\inf_{u \in X}\gamma(u,u)>0$.
\end{enumerate}
\end{lem}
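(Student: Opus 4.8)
The plan is to represent the matrix-valued kernel as a Hadamard-type integral of the Schoenberg kernels $e^{-t\gamma}$ ($t>0$) and then read off everything from Theorems \ref{improvSchoen} and \ref{integrallygaussianonmetric} together with Lemma \ref{Hadamardintegral}. First I would use the Gamma integral
$$
\frac{\Gamma(\nu_i+\nu_j)}{\gamma(u,v)^{\nu_i+\nu_j}}=\int_{0}^{\infty}t^{\nu_i+\nu_j-1}e^{-t\gamma(u,v)}\,dt
$$
and the factorization $t^{\nu_i+\nu_j-1}=t^{\nu_i-1/2}\,t^{\nu_j-1/2}$, so that the integrand is the positive semidefinite matrix kernel $e^{-t\gamma(u,v)}\,w(t)w(t)^{\mathsf T}$ with $w(t)=(t^{\nu_1-1/2},\dots,t^{\nu_\ell-1/2})^{\mathsf T}$; in particular the kernel of the lemma is positive definite and continuous. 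Reading it as a scalar kernel on $X\times\{1,\dots,\ell\}$, this writes it as $\int_{(0,\infty)}p_t\,dt$ for the bounded continuous positive definite kernels $p_t((u,i),(v,j))=t^{\nu_i+\nu_j-1}e^{-t\gamma(u,v)}$, with $p$ jointly continuous, so Lemma \ref{Hadamardintegral} applies: its $\mathcal M_\delta$, $\mathcal M_c$ hypotheses are automatic, and for the $\mathcal M(X)$ case one needs the kernel bounded, i.e.\ $\inf_u\gamma(u,u)>0$, together with local boundedness on $(0,\infty)$ of $t\mapsto\max_i t^{2\nu_i-1}$, which holds. For measures $\lambda_1,\dots,\lambda_\ell$ on $X$, setting $\mu^{(t)}:=\sum_i t^{\nu_i-1/2}\lambda_i$, the rank-one factor $w(t)w(t)^{\mathsf T}$ collapses the color double sum, so the relevant vanishing of the double integral against $(\lambda_i)$ is equivalent to
$$
\iint e^{-t\gamma(u,v)}\,d\mu^{(t)}(u)\,d\overline{\mu^{(t)}}(v)=0\qquad\text{for all }t>0 .
$$

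Part (i) is where real work is needed, since $\gamma$ is not assumed metrizable and $e^{-t\gamma}$ need not be SPD even on a finite set. I would take $\lambda_i=\sum_k c_{k,i}\delta_{u_k}$ with $u_1,\dots,u_n\in X$ distinct, use the representation $\gamma(u,v)=\|h(u)-h(v)\|^2+f(u)+f(v)$ from Equation \ref{condequa} (so $f=\gamma(\cdot,\cdot)/2>0$), and rewrite the quadratic form of $e^{-t\gamma}$ against $\mu^{(t)}$ as $\sum_{k,m}b_k(t)\overline{b_m(t)}\,e^{-t\|h(u_k)-h(u_m)\|^2}$ with $b_k(t)=e^{-tf(u_k)}\sum_i c_{k,i}t^{\nu_i-1/2}$. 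Grouping the indices $k$ by the finitely many distinct values of $h(u_k)$ and using that the Gaussian kernel is SPD on a Hilbert space (Theorem \ref{Gauszinho}), this vanishes for all $t>0$ iff $\sum_k\sum_i c_{k,i}t^{\nu_i-1/2}e^{-tf(u_k)}=0$ for all $t>0$, separately for each group. Since the functions $t\mapsto t^{a}e^{-bt}$ with pairwise distinct $(a,b)$ are linearly independent on $(0,\infty)$, collecting coefficients of each pair $(\nu_i-1/2,f(u_k))$ turns this into $\sum_{(k,i):\,k\in\text{group},\ \nu_i=\nu,\ f(u_k)=\phi}c_{k,i}=0$ for every group and every pair $(\nu,\phi)$; as the points are arbitrary, $c\equiv0$ is forced for every configuration exactly when no such equation ever involves two distinct pairs, i.e.\ when there is no $i\neq j$ with $\nu_i=\nu_j$ and no $u\neq v$ with $h(u)=h(v)$, $f(u)=f(v)$. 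Recognizing that $h(u)=h(v)$ together with $f(u)=f(v)$ is equivalent to $\gamma(u,v)=\gamma(u,u)=\gamma(v,v)$ yields the stated condition.

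Parts (ii) and (iii) are then short. When $\gamma$ is metrizable, so is $t\gamma$ (its Hilbertian metric is $\sqrt t\,D_\gamma$), hence $e^{-t\gamma}$ is universal for every $t>0$ by Theorem \ref{improvSchoen}; so for $\lambda\in\mathcal M_c$ the displayed identity forces $\mu^{(t)}=0$ for all $t>0$, and for $\ell$ distinct values $t_1,\dots,t_\ell$ the generalized Vandermonde matrix $[t_m^{\nu_i-1/2}]$ is invertible precisely when the $\nu_i$ are distinct, giving $\lambda_i=0$ for all $i$; this is the ``if'' half of (ii). For the ``only if'' half, a universal kernel is SPD, so part (i) applies, and metrizability rules out the pairs $\gamma(u,v)=\gamma(u,u)=\gamma(v,v)$, leaving the requirement that the $\nu_i$ be distinct. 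Part (iii) is identical with Theorem \ref{improvSchoen}, ``$\mathcal M_c$'', and ``universal'' replaced by Theorem \ref{integrallygaussianonmetric}, ``$\mathcal M$'', and ``ISPD''; here $\inf_u\gamma(u,u)>0$ makes $K$ bounded and makes $t\gamma(x,x)$ bounded below so Theorem \ref{integrallygaussianonmetric} gives $e^{-t\gamma}$ ISPD, and conversely an ISPD kernel must be bounded by Definition \ref{intstricdefn}, which fails unless $\inf_u\gamma(u,u)>0$.

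The main obstacle is part (i): the bookkeeping tying the null space of the finite Gaussian Gram matrix to the $t$-dependence of the color weights $t^{\nu_i-1/2}$, resting on the two elementary linear-independence facts (the generalized Vandermonde invertibility and the independence of $\{t^ae^{-bt}\}$ over distinct $(a,b)$). Everything else is a direct application of Lemma \ref{Hadamardintegral} and Theorems \ref{improvSchoen}, \ref{integrallygaussianonmetric}.
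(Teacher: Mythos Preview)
Your proposal is correct and follows essentially the same route as the paper: the Gamma integral representation plus Lemma \ref{Hadamardintegral} reduce everything to the Schoenberg kernels $e^{-t\gamma}$, part (i) is handled via the decomposition $\gamma=f+\|h(\cdot)-h(\cdot)\|^2+f$, grouping by the values of $h$, Gaussian SPD (Theorem \ref{Gauszinho}), and linear independence of $\{t^{a}e^{-bt}\}$, while parts (ii)--(iii) follow from universality/ISPD of $e^{-t\gamma}$ (Theorems \ref{improvSchoen}, \ref{integrallygaussianonmetric}) and linear independence of $\{t^{\nu_i-1/2}\}$. The only cosmetic differences are that the paper invokes Lemma \ref{Kroengen1} to collapse the color factor whereas you do it by hand via the rank-one $w(t)w(t)^{\mathsf T}$, and the paper proves the independence of $\{t^{a}e^{-bt}\}$ by an explicit argmin-induction rather than citing it.
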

\begin{proof}Indeed, if the sets are not equal and $(i,j,u,v)$ belongs to the left hand set but not to  the right hand set, then the interpolation matrix of the kernel at the points $u,v$ ($\ell\times \ell$ matrix if $u=v$ and $2\ell\times 2\ell$ matrix if $u\neq v$) is not a positive definite matrix. Conversely, by the definition of the gamma function
$$
\frac{\Gamma(\nu_{i} + \nu_{j})}{\gamma(u,v)^{\nu_{i} + \nu_{j}}}=\int_{(0,\infty)}t^{\nu_{i}}t^{\nu_{j}}e^{-\gamma(u,v)t}dt.
$$
So, by Lemma \ref{Hadamardintegral}  the kernel is SPD if and only if the only for every finite quantity of distinct points $u_{1}, \ldots, u_{m}$ and scalars $c_{i,\mu} \in \mathbb{R}$ for which
$$
\sum_{i,j=1}^{\ell}\sum_{\mu, \eta=1}^{m} c_{i, \mu}c_{j, \eta}t^{\nu_{i}}t^{\nu_{j}}e^{-\gamma(u_{\mu},u_{\eta})t}, \quad t \in (0, \infty)
$$
then  all scalars $c_{i, \mu}$ are equal to zero.   By Equation \ref{condequa}, we can write  $\gamma(u_{\mu},u_{\eta})= f(u_{\mu}) + \|h(u_{\mu}) - h(u_{\eta})\|^{2} + f(u_{\eta}) $. Consider  the equivalence class $\mu \simeq \eta$ if $h(u_{\mu}) = h(u_{\eta})$,  which  separates the set $\{1, \ldots , m\}$ on a finite number of disjoint sets $F_{1}, \ldots , F_{m^{\prime}}$, $m^{\prime } \leq m$. Note that
\begin{align*}
0&=\sum_{i,j=1}^{\ell}\sum_{\mu, \eta=1}^{m} c_{i, \mu}c_{j, \eta}t^{\nu_{i}}t^{\nu_{j}}e^{-\gamma(u_{\mu},u_{\eta}a)t}=\sum_{i,j=1}^{\ell}\sum_{\mu, \eta=1}^{m} c_{i, \mu}c_{j, \eta}t^{\nu_{i}}e^{-f(u_{\mu})t}t^{\nu_{j}}e^{-f(u_{\eta})t}e^{-\|h(u_{\mu})-h(u_{\eta})\|^{2}t}\\
&=\sum_{a,b=1}^{m^{\prime}} \left (\sum_{i=1}^{\ell}\sum_{\mu \in F_{a}} c_{i, \mu}t^{\nu_{i}}e^{-f(u_{\mu})t}\right )\left (\sum_{j=1}^{\ell}\sum_{\eta \in F_{b}} c_{j, \eta}t^{\nu_{j}}e^{-f(u_{\eta})t} \right )e^{-\|h(x_{a}) - h(x_{b})\|^{2}t}
\end{align*}
where $x_{1}, \ldots , x_{m}$ are class representatives. By Theorem \ref{Gauszinho} we have that  \\ $\sum_{i=1}^{\ell}\sum_{\mu \in F_{a}} c_{i, \mu}t^{\nu_{i}}e^{-f(u_{\mu})t}=0$ for every $t>0$ and $1\leq a \leq m$. Without loss of generalization suppose that $m^{\prime}=m$ and note that the pairs $(\nu_{i}, f(u_{\mu}))$  are distinct by the hypothesis. If $Z:= \{(\nu_{i}, f(u_{\mu}))\}$,  $X_{1} := argmin\{f(u_{\mu}), 1\leq \mu \leq m\}$, $y_{1}:= \min\{f(\mu), 1\leq \mu \leq m\}$ then the set of numbers $\nu^{1}:=\{ (\nu, u ), \quad (\nu, u ) \in (\{\nu\}\times X_{1}) \cap Z     \}$  are such that
$$
0= \sum_{i=1}^{\ell}\sum_{\mu =1}^{m} c_{i, \mu}t^{\nu_{i}}e^{-(f(\mu)+x_{1})t}= \sum_{(\nu, u) \in \nu^{1} }c_{\nu, u}t^{\nu} + \sum_{i=1}^{\ell}\sum_{\mu =1\mid \mu \notin X_{1}}^{m} c_{i, \mu}t^{\nu_{i}}e^{-(f(u_{\mu})+y_{1})t}.
$$
The first sum is a function that either is zero or  diverges in module as $t \to \infty$, while the second sum is a function that goes to zero as $t$ goes to infinity, then we must have that each sum is the zero function on $(0, \infty)$. But, the function $\sum_{(\nu, u) \in \nu^{1} }c_{\nu, u}t^{\nu}$ being zero on $(0, \infty)$, either there are two equal exponents $\nu$, which does not occur by the hypothesis, or all coefficients are zero. By an induction argument all coefficients $c_{i, \mu}$ are zero and then the kernel is SPD.\\    
As for  relation $(ii)$, since $e^{-\gamma(u,v)t}$ is an universal kernel for every $t>0$, Lemma \ref{Hadamardintegral} and Lemma \ref{Kroengen1} implies that the matrix valued kernel is  universal if and only if the only scalars $c_{1}, \ldots ,c_{\ell} \in \mathbb{R}$ for which
$$
\sum_{i=1}^{\ell}c_{i}t^{\nu_{i}}=0, \quad t \in (0,\infty)
$$
are all equal to zero. The result is then a consequence that  the set of functions $\{t^{\nu_{1}}, \ldots , t^{\nu_{\ell}}\}$ are linearly independent if and only if the exponents  $\nu_{1}, \ldots , \nu_{\ell}$ are distinct.\\
Relation $(iii)$ follows by similar arguments as relation $(ii)$. The condition $\inf_{u \in X}\gamma(u,u)>0$ is equivalent to the matrix valued kernel being bounded.\end{proof}

\begin{proof}[\textbf{Proof of Theorem \ref{genmaternuniversal}}]The continuity follows by the continuity of the functions involved. By the integral representation of  $\mathscr{M}(\|x-y\|; \alpha, \nu)$ and the proof of Theorem $1$ in \cite{Porcumaterngaussfamily} we have that
\begin{align*}
C_{i,j}^{\mathscr{M}, \gamma}((u,x), (v,y))&=A_{i,j}(u,v) \int_{(0, \infty)}e^{-\|x-y\|^{2}t/\gamma(u,v)} \left (  \left(\frac{\alpha_{i,j}^{2}}{4}\right )^{\nu_{i,j}}\frac{t^{-1-\nu_{i,j}}}{\Gamma(\nu_{i,j})}e^{-\alpha_{i,j}^{2}/4t}\right )dt\\
&=C_{i,j}(u,v)\frac{1}{\gamma^{m/2}(u,v)}\int_{(0, \infty)}e^{-\|x-y\|^{2}t/ \gamma(u,v)}m_{i,j}(t)dt,
\end{align*}
where $m_{i,j}(t)= m_{i}(t)m_{j}(t)$, $m_{i}(t)= \frac{\alpha_{i}^{2\nu_{i}}}{2^{2\nu_{i}}\Gamma(2\nu_{i})^{1/2}}t^{-\nu_{i} -1/2}e^{-\alpha_{i}^{2}/8t}$. The positivity of the kernel follows by this integral representation together  with the hypothesis on the kernel $C$ and the fact that $G_{A, \gamma}$ is a positive definite kernel.  \\
If the matrix valued kernel $C^{A, \gamma}$ is SPD (universal) then $C_{i,i}^{\mathscr{M}, \gamma}((0,u),(0,u))= A_{i,i}(u,u) >0$ for every $1\leq i \leq \ell$ and $u \in X$. Also, if $i \neq j $ is such that $\nu_{i}= \nu_{j}$ and $\alpha_{i}= \alpha_{j}$ then the scalar valued kernels $C_{i,j}^{\mathscr{M}, \gamma}, C_{i,i}^{\mathscr{M}, \gamma}, C_{j,j}^{\mathscr{M}, \gamma}$ are all equal, which does not occur if $C^{A, \gamma}$ is a matrix valued SPD kernel.\\
In order to prove the converse we analyse the matrix valued kernel 
\begin{equation}\label{genmaternuniversaleq1}
((u,x),(v,y)) \in (  X \times \mathbb{R}^{m})^{2} \to \left [\frac{1}{\gamma^{m/2}(u,v)}\int_{(0, \infty)}e^{-\|x-y\|^{2}t/ \gamma(u,v)}m_{i,j}(t)dt \right ]_{i,j=1}^{\ell} \in M_{\ell}(\mathbb{C}).
\end{equation}
Theorem \ref{genGaussianuniversal}  implies that the kernel $\gamma(u,v)^{-m/2}e^{-\|x-y\|^{2}t/ \gamma(u,v)}$ defined on $ X\times \mathbb{R}^{m}$ is universal for every $t>0$. Lemma \ref{Hadamardintegral} and Lemma \ref{Kroengen1} implies that  the matrix valued kernel on Equation  \ref{genmaternuniversaleq1} is SPD (universal) if and only if the matrix (which is independent from $u_{0}$) 
\begin{align*}
\left [\int_{(0, \infty)}m_{i,j}(t)dt \right ]_{i,j=1}^{\ell}&= 
\left [\frac{A_{i,j}(u_{0}, u_{0}) \gamma^{m/2}(u_{0},u_{0})}{C_{i,j}(u_{0},u_{0})} \right ]_{i,j=1}^{\ell}\\
&= \left [\frac{\alpha_{i}^{\nu_{i}}}{2^{-\nu_{i}}\Gamma(2\nu_{i})^{1/2}}\frac{ \alpha_{j}^{\nu_{j}}}{2^{ - \nu_{j}}\Gamma(2\nu_{j})^{1/2}}\frac{\Gamma(\nu_{i}+\nu_{j})}{(\alpha_{i}+\alpha_{j})^{\nu_{i}+ \nu_{j}}}\right ]_{i,j=1}^{\ell}
\end{align*}
is positive definite, which is characterized on  Lemma \ref{gammakernel}.\\
Lemma \ref{Schurmulgen1} implies that the kernel $C_{i,j}^{\mathscr{M}, \gamma}$ is SPD (universal) when the  kernel on equation \ref{genmaternuniversaleq1} is SPD(universal) and   $C_{i,i}(u,u)>0$ (or equivalently,  $A_{i,i}(u,u)>0$) for every $1\leq i \leq \ell$ and $u \in X$.
\end{proof}

\begin{proof}[\textbf{Proof of Theorem \ref{genmaternc0universal}}] The fact that if $\mathcal{H}_{C^{A, \gamma}} \subset C_{0}(X \times \mathbb{R}^{m}, \mathbb{C}^{\ell})$ then  $\mathcal{H}_{A}\subset C_{0}(X, \mathbb{C}^{\ell})$ is similar to the one presented at Theorem \ref{genGaussianintegralllymetric}. Conversely, if $\mathcal{H}_{A}\subset C_{0}(X, \mathbb{C}^{\ell})$ and the kernel $A$ is bounded by $1$, then for every $v \in X$ and $\epsilon >0$ there exists a compact set $\mathcal{C}_{v,\epsilon}$ for which $|A_{i,j}(u,v)|< \epsilon $ for every $u \in X \setminus \mathcal{C}_{v,\epsilon}$ and $1\leq i,j \leq \ell$. If $M:=inf_{u \in \mathcal{C}_{v,\epsilon} \cup\{v\}} \gamma(u,u)^{1/2}$, then 
$$
|\mathscr{M}(\|x-y\|/ \gamma(u,v)^{1/2}; \alpha_{i,j}, \nu_{i,j} )|\leq  \mathscr{M}(\|x-y\|/ M; \alpha_{i,j}, \nu_{i,j} ), \quad u \in \mathcal{C}_{v,\epsilon}, 1\leq i,j \leq \ell.
$$
The $\ell^{2}$ functions on the right hand side of previous equation are in $C_{0}(\mathbb{R}^{m})$, so for every $y \in \mathbb{R}^{m}$ there exists a compact set $\mathcal{C}_{y, \epsilon}$ for which  
$$
|\mathscr{M}(\|x-y\|/M; \alpha_{i,j}, \nu_{i,j} )| < \epsilon,  \quad x  \in \mathbb{R}^{m} \setminus \mathcal{C}_{y, \epsilon}, 1\leq i,j \leq \ell, 
$$
and then
$$
|C^{A \gamma}_{i,j}((u,x),(v,y))|< \epsilon, \quad (u,x) \in X \times \mathbb{R}^{m} \setminus  \mathcal{C}_{v,\epsilon} \times \mathcal{C}_{y, \epsilon}, 1\leq i,j \leq \ell. 
$$
Now, we focus on the second relation. If $C^{A, \gamma}$ is  $C_{0}(X \times \mathbb{R}^{m}, \mathbb{C}^{\ell})$-universal, then $\mathcal{H}_{C^{A, \gamma}} \subset C_{0}(X \times \mathbb{R}^{m}, \mathbb{C}^{\ell})$ by definition and by the first part of the theorem we must have that $\mathcal{H}_{A} \subset C_{0}(X, \mathbb{C}^{\ell})$. Also, $A_{i,i}(u,u) >0$ for every $u \in X$ and $1\leq i \leq \ell$ by  Theorem \ref{genGaussianuniversal}. \\
In order to prove the converse we analyse the matrix valued kernel 
\begin{equation}\label{genmaternc0universaleq1}
((u,x),(y,v)) \in (  X \times \mathbb{R}^{m})^{2} \to \left [\frac{1}{\gamma^{m/2}(u,v)}\int_{(0, \infty)}e^{-\|x-y\|^{2}t/ \gamma(u,v)}m_{i,j}(t)dt \right ]_{i,j=1}^{\ell} \in M_{\ell}(\mathbb{C}).
\end{equation}
By the hypothesis and Lemma \ref{prop17caponettogen}, the kernel $\gamma(u,v)^{-m/2}e^{-\|x-y\|^{2}t/ \gamma(u,v)}$ defined on $X\times \mathbb{R}^{m}$ is ISPD for every $t>0$ ($\|x-y\|^{2}t= \|(\sqrt{t}x) - (\sqrt{t}y)\|^{2}$). Lemma \ref{Hadamardintegral} and Lemma \ref{Kroengen1} implies that  the matrix valued kernel on Equation  \ref{genmaternc0universaleq1} is ISPD  if and only if the matrix (which is independent from $u_{0}$) 
\begin{align*}
\left [\int_{(0, \infty)}m_{i,j}(t)dt \right ]_{i,j=1}^{\ell}&= 
\left [\frac{A_{i,j}(u_{0}, u_{0}) \gamma^{m/2}(u_{0},u_{0})}{C_{i,j}(u_{0},u_{0})} \right ]_{i,j=1}^{\ell}\\
&= \left [\frac{\alpha_{i}^{\nu_{i}}}{2^{-\nu_{i}}\Gamma(2\nu_{i})^{1/2}}\frac{ \alpha_{j}^{\nu_{j}}}{2^{ - \nu_{j}}\Gamma(2\nu_{j})^{1/2}}\frac{\Gamma(\nu_{i}+\nu_{j})}{(\alpha_{i}+\alpha_{j})^{\nu_{i}+ \nu_{j}}}\right ]_{i,j=1}^{\ell}
\end{align*}
is positive definite, which is characterized  on  Lemma \ref{gammakernel}. Lemma \ref{Schurmulgen1} implies that the kernel $C_{i,j}^{\mathscr{M}, \gamma}$ is ISPD  when the kernel on Equation \ref{genmaternc0universaleq1} is ISPD and $C_{i,i}(u,u)>0$ (or equivalently,  $A_{i,i}(u,u)>0$) for every $1\leq i \leq \ell$ and $u \in X$.
\end{proof}

\begin{proof}[\textbf{Proof of Theorem \ref{genmaterninfinitouniversal}}]$(i)$ The continuity follows by the continuity of the functions involved. By the integral representation of  $\mathscr{M}(\|x-y\|; r, \nu)$, we have that
\begin{align*}
[\mathscr{M}_{A,\gamma}]_{i,j}((u,x),(v,y))&=A_{i,j}(u,v) \int_{(0, \infty)}e^{-\|x-y\|^{2}t} \left (  \left(\frac{\gamma(u,v)}{4}\right )^{\nu_{i} + \nu_{j}}\frac{t^{-1-\nu_{i}-\nu_{j}}}{\Gamma(\nu_{i}+\nu_{j})}e^{-\gamma(u,v)/4t}\right )dt\\
&=C_{i,j}(u,v)\int_{(0, \infty)}e^{-\|x-y\|^{2}t}e^{-\gamma(u,v)/4t}t^{-1}(4t)^{-\nu_{i}}(4t)^{-\nu_{j}}dt,
\end{align*}
and the positivity of the kernel follows by this representation. \\
$(ii)$ If $A_{i,i}(u,u)$ is not a positive number for some $u \in X$ and $1\leq i \leq \ell$ or the numbers $\nu_{1}, \ldots, \nu_{\ell}$ are not distinct, it is immediate that the kernel is not SPD.\\
Conversely, since $C_{i,i}(u,u)>0$ for every $1\leq i \leq \ell$ and $u \in X$, by Lemma \ref{Schurmulgen1} in order to prove that the kernel is SPD/universal   is sufficient to prove that the kernel defined by the integral on $(0, \infty)$ is SPD/universal.\\ 
The Gaussian kernel $e^{-\|x-y\|^{2}t}$ and  the Schoenberg kernel $e^{-\gamma(u,v)/t}$ are universal  for every $t \in (0, \infty)$, so by Lemma \ref{Hadamardintegral} and Lemma \ref{Kroengen1} the kernel defined by the integral on $(0, \infty)$ is SPD/universal if and only if the  only scalars $c_{1}, \ldots, c_{n} \in \mathbb{R}$ for which $\sum_{i=1}^{\ell}c_{i}t^{-\nu_{i}}=0$ for every  $t >0$ are all equal to zero, which holds true because the numbers $\nu_{i}$ are distinct.\\
$(iii)$ We focus on the proof of the converse relation.  Since $C_{i,i}(u,u)>0$ for every $1\leq i \leq \ell$ and $u \in X$, by Lemma \ref{Schurmulgen1} in order to prove that the kernel is  ISPD is sufficient to prove that the matrix valued kernel
$$\int_{(0, \infty)}e^{-\|x-y\|^{2}t}e^{-\gamma(u,v)/4t}t^{-1}(4t)^{-\nu_{i}}(4t)^{-\nu_{j}}dt,
$$
is ISPD. The Gaussian kernel $e^{-\|x-y\|^{2}t}$ and  the Schoenberg kernel $e^{-\gamma(u,v)/4t}$ are ISPD  for every $t \in (0, \infty)$, so by Lemma \ref{Hadamardintegral} and Lemma \ref{Kroengen1} the kernel defined by the integral on $(0, \infty)$ is ISPD if and only if the  only scalars $c_{1}, \ldots, c_{n} \in \mathbb{R}$ for which $\sum_{i=1}^{\ell}c_{i}t^{-\nu_{i}}=0$ for every  $t >0$ are all equal to zero, which holds true because the numbers $\nu_{i}$ are distinct.\\
$(iv)$ If   $\mathcal{H}_{A} \subset C_{0}(X, \mathbb{C}^{\ell} )$ and the kernel $A$ is bounded by $1$, then for every $v \in X$ and $\epsilon >0$ there exists a compact set $\mathcal{C}_{v,\epsilon}$ for which $|A_{i,j}(u,v)|< \epsilon $ for every $u \in X \setminus \mathcal{C}_{v,\epsilon}$ and $1\leq i,j \leq \ell$. If $M_{1}:=\inf_{u \in \mathcal{C}_{v,\epsilon} \cup\{v\}} \gamma(u,u)$ and $M_{2}:=\sup_{u \in \mathcal{C}_{v,\epsilon} \cup\{v\}} \gamma(u,u)$, then 
$$
|\mathscr{M}(\|x-y\|; \gamma(u,v)^{1/2}, \alpha_{i} + \alpha_{j})|\leq  \left (\frac{M_{2}}{M_{1}} \right )^{\nu_{i} + \nu_{j}}\mathscr{M}(\|x-y\|; M_{1}, \alpha_{i} + \alpha_{j} ),
$$
for every $u \in \mathcal{C}_{v,\epsilon}$, $1\leq i,j \leq \ell$. The $\ell^{2}$ functions on the right hand side of previous equation are in $C_{0}(\mathbb{R}^{m})$, so for every $y \in \mathbb{R}^{m}$ there exists a compact set $\mathcal{C}_{y, \epsilon}$ for which  
$$
\left |\left (\frac{M_{2}}{M_{1}} \right )^{\nu_{i} + \nu_{j}}\mathscr{M}(\|x-y\|; M_{1}, \alpha_{i} + \alpha_{j} ) \right | < \epsilon, \quad x \in \mathcal{C}_{y,\epsilon}, 1\leq i,j \leq \ell, 
$$
and then
$$
\left |[\mathscr{M}_{A,\gamma}]_{i,j}((u,x),(v,y)) \right |< \epsilon, \quad (u,x) \in X \times \mathbb{R}^{m} \setminus \mathcal{C}_{v,\epsilon} \times \mathcal{C}_{y, \epsilon}, 1\leq i,j \leq \ell.$$
The proof that if $\mathcal{H}_{\mathscr{M}_{A,\gamma}} \subset C_{0}(X \times \mathbb{R}^{m}, \mathbb{C}^{\ell})$ then  $\mathcal{H}_{A}\subset C_{0}(X, \mathbb{C}^{\ell})$ is similar to the one presented at Theorem \ref{genGaussianintegralllymetric}.
\end{proof}

\subsection{Section \ref{Kernels and hyperbolic spaces}}
Even though the results in this section are a direct consequence of Section \ref{Universality of Schoenberg-Gaussian kernels}, as mentioned in Section \ref{Hyperbolic  and log-conditional  kernels}, we present a direct proof for Theorem \ref{infihypersecondstep}. We focus on the hyperbolic spaces $\mathbb{H}^{m}$ only  to simplify the notation since several summations over multi-indexes are needed.

\begin{proof}[\textbf{(Partial) Proof of Theorem \ref{infihypersecondstep}}]
First, note that the kernel is bounded and
\begin{align*}
 [(x,t_{x}),(y,t_{y})  ]_{\mathbb{H}^{m}}^{-r}&=(t_{x}t_{y} - \langle x, y\rangle )^{-r}=(\sqrt{1+\|x\|^{2}}\sqrt{1+\|y\|^{2}} - \langle x, y\rangle )^{-r}\\
&= (1+\|x\|^{2})^{-r/2} \left (1 -  \left \langle \frac{x}{\sqrt{1+\|x\|^{2}}}, \frac{y}{\sqrt{1+\|y\|^{2}}} \right \rangle \right )^{-r} (1+\|y\|^{2})^{-r/2}. 
\end{align*}
The previous equation implies that $(x, t_{x}) \in \mathbb{H}^{m} \to [(x,t_{x}),(y,t_{y})  ]_{\mathbb{H}^{m}}^{-r} \in \mathbb{R}$ belongs to $C_{0}(\mathbb{H}^{m})$ for every $(y, t_{y}) \in \mathbb{H}^{m}$ because 
$$
[(x,t_{x}),(y,t_{y})  ]_{\mathbb{H}^{m}}^{-r} \leq (1+\|x\|^{2})^{-r/2} \left (1 -   \frac{\|x\|\|y\|}{\sqrt{1+\|x\|^{2}}\sqrt{1+\|y\|^{2}}}  \right )^{-r} (1+\|y\|^{2})^{-r/2},
$$
Proposition \ref{rkhscontained} implies that $\mathcal{H}_{[\cdot,\cdot]} \subset C_{0}(\mathbb{H}^{m})$. By the homeomorphism  $z= (x,t_{x}) \in \mathbb{H}^{m} \to x \in \mathbb{R}^{m}$, the kernel  is $C_{0}$-universal if and only if the only measure $\lambda \in \mathcal{M}(\mathbb{R}^{m})$ for which
\begin{equation}\label{hypeq2}
\int_{\mathbb{R}^{m}}\int_{\mathbb{R}^{m}} (1+\|x\|^{2})^{-r/2} \left (1 -  \left \langle \frac{x}{\sqrt{1+\|x\|^{2}}}, \frac{y}{\sqrt{1+\|y\|^{2}}} \right \rangle \right )^{-r} (1+\|y\|^{2})^{-r/2} d\lambda(x) d\lambda(y)=0
\end{equation}
is the zero measure. Since $|\langle x/\sqrt{1+\|x\|^{2}}, y/\sqrt{1+\|y\|^{2}}\rangle | < 1$ for every $x, y \in \mathbb{R}^{m}$, by the Taylor series of the hypergeometric functions $s  \to (1 - s)^{-r}$, the following series is absolutely convergent for every $r \in \mathbb{R}$
$$
\left (1 -  \left \langle \frac{x}{\sqrt{1+\|x\|^{2}}}, \frac{y}{\sqrt{1+\|y\|^{2}}} \right \rangle \right )^{-r}= \sum_{k=0}^{\infty}\binom{-r}{k}(-1)^{k}\left \langle \frac{x}{\sqrt{1+\|x\|^{2}}}, \frac{y}{\sqrt{1+\|y\|^{2}}} \right \rangle^{k},
$$
where $\binom{a}{0}=1$, $\binom{a}{1}=a$ and $\binom{a}{k+1}= \frac{(a-k)}{k}\binom{a}{k}$ , for every $a \in \mathbb{R}$. So, if a finite measure $\lambda \in \mathcal{M}(\mathbb{R}^{m})$ is such that Equation \ref{hypeq2} holds, then
$$
\int_{\mathbb{R}^{m}}\int_{\mathbb{R}^{m}} (1+\|x\|^{2})^{-r/2} \left \langle \frac{x}{\sqrt{1+\|x\|^{2}}}, \frac{y}{\sqrt{1+\|y\|^{2}}} \right \rangle^{k} (1+\|y\|^{2})^{-r/2} d\lambda(x) =0
$$
for every $k \in \mathbb{Z}_{+}$, and consequently
\begin{equation}\label{hypeq3}
\int_{\mathbb{R}^{m}} x^{\alpha}(1+\|x\|^{2})^{-r/2- |\alpha|/2}  d\lambda(x) =0
\end{equation}
for every $\alpha \in \mathbb{Z}_{+}^{m}$. We claim that
\begin{equation}\label{hypeq4}
\int_{\mathbb{R}^{m}} x^{\alpha}(1+\|x\|^{2})^{-v- |\alpha|/2}  d\lambda(x) =0
\end{equation}
for every $\alpha \in \mathbb{Z}_{+}^{m}$ and $v>0$. To prove this relation we follow a similar path as the one we made at Equation \ref{gausssunboundc0eq1}  on the proof of Theorem \ref{Gauszao}. We already know that it holds for every  $\alpha \in \mathbb{Z}_{+}^{m}$ and $v=r/2$, our induction step is to prove that  if it holds for every $\alpha \in \mathbb{Z}_{+}^{m}$ and a  $u>0$, then it holds for every $\alpha \in \mathbb{Z}_{+}^{m}$ and  $v \in (0, 2u)$. First, note that our induction hypothesis implies that
$$
\int_{\mathbb{R}^{m}} x^{\alpha}(1+\|x\|^{2})^{-u- |\alpha|/2} \left (\frac{\|x\|^{2}}{1 + \|x\|^{2}}  \right )^{k} d\lambda(x) =0
$$
for every $\alpha \in \mathbb{Z}_{+}^{m}$ and $k \in \mathbb{Z}_{+}$. By the Taylor series  expansion  of the hypergeometric function $s  \to (1 - s)^{v-u} \in \mathbb{R}$, the following series is absolutely convergent for every $x \in \mathbb{R}^{m}$
$$
(1 + \|x\|^{2})^{u-v}= \left (1 - \frac{\|x\|^{2}}{1 + \|x\|^{2}}  \right )^{v-u}= \sum_{k=0}^{\infty}(-1)^{k}\binom{v-u}{k}\left ( \frac{\|x\|^{2}}{1 + \|x\|^{2}} \right )^{k},
$$
moreover the function $x \in \mathbb{R}^{m} \to x^{\alpha}(1+\|x\|^{2})^{-v-|\alpha|/2} \in \mathbb{R}$ is bounded and also the function 
$$
 h(x):=\sum_{k=0}^{\infty} \left |x^{\alpha}(1+\|x\|^{2})^{-u-|\alpha|/2}(-1)^{k}\binom{v-u}{k}\left ( \frac{\|x\|^{2}}{1 + \|x\|^{2}} \right )^{k} \right |.   
$$
We separate the proof that the function $h$ is bounded in two cases.\\
\textbf{Case $1$:} When $v<u$, the function $h$ is bounded because $(-1)^{k}\binom{v-u}{k} \geq 0$ for every $k \in \mathbb{Z}_{+}$,  $|x^{\alpha}(1+\|x\|^{2})^{-|\alpha|/2}| \leq 1$ for every $x \in \mathbb{R}^{m}$ and  with these inequalities we obtain that $h(x) \leq (1+\|x\|^{2})^{-u}(1+\|x\|^{2})^{u-v}= (1+\|x\|^{2})^{-v}$.\\
\textbf{Case $2$:} When $v>u$, let $k_{0} \in \mathbb{Z}_{+}$ be such  such that $v-u-k_{0} \geq 0 $  but $v-u-k_{0}-1 < 0 $, then   $(-1)^{k_{0}}(-1)^{k}\binom{v-u}{k} \geq 0$ for every $k > k_{0}$, so if $h_{k}(x):= (1+\|x\|^{2})^{-u}(-1)^{k}\binom{v-u}{k}\left ( \frac{\|x\|^{2}}{1 + \|x\|^{2}} \right )^{k} $, then
\begin{align*}
h(x) \leq &\sum_{k=0}^{\infty} \left |h_{k}(x)\right |=\sum_{k=0}^{k_{0}}|h_{k}(x)| + (-1)^{k_{0}}\sum_{k_{0}+1}^{\infty}h_{k}(x)\\
&= \sum_{k=0}^{k_{0}}|h_{k}(x)| + (-1)^{k_{0}}\left [ (1+\|x\|^{2})^{-u}(1+ \|x\|^{2})^{u-v}  - \sum_{k=0}^{k_{0}}h_{k}(x) \right ]\\
&\leq 2\sum_{k=0}^{k_{0}}|h_{k}(x)| + (1+\|x\|^{2})^{-v}, 
\end{align*}
which proves that $h$ is a bounded function because each $h_{k}$ is a bounded function.\\
In particular, the dominated convergence theorem implies that
\begin{align*}
0&=\sum_{k=0}^{\infty}(-1)^{k}\binom{v-u}{k}\int_{\mathbb{R}^{m}} x^{\alpha}(1+\|x\|^{2})^{-u- |\alpha|/2} \left (\frac{\|x\|^{2}}{1 + \|x\|^{2}}  \right )^{k} d\lambda(x)\\
&= \int_{\mathbb{R}^{m}} x^{\alpha}(1+\|x\|^{2})^{-v- |\alpha|/2}  d\lambda(x)
\end{align*}
which settles the proof of  our claim. Now, consider the algebra of functions on $C_{0}(\mathbb{R}^{m})$
$$
\mathcal{A}:= span\{x \in \mathbb{R}^{m} \to x^{\alpha}(1+\|x\|^{2})^{-v-\alpha/2} \in \mathbb{R}, \quad  \alpha \in \mathbb{Z}_{+}^{m}, \quad v >0   \}.
$$
The function $h(x)=(1+\|x\|^{2})^{-1-\alpha/2} \in \mathcal{A} $ is such that $h(x)>0$ for every $x \in \mathbb{R}^{m}$, also, the algebra $\mathcal{A}$ separates points because if $x^{\alpha}(1+\|x\|^{2})^{-1-\alpha/2}= y^{\alpha}(1+\|y\|^{2})^{-1-\alpha/2}$ for every $\alpha \in \mathbb{R}^{m}$, then we must have that $[(x,t_{x}),(y, t_{y})]_{\mathcal{H}_{m}} = 1$, which can only occur if $x=y$. By the Stone-Weierstrass Theorem the algebra of functions  $\mathcal{A}$ is dense on $C_{0}(\mathbb{R}^{m})$. Since our claim made at Equation \ref{hypeq4} implies that for every $h \in \mathcal{A}$, we have that   $\int_{\mathbb{R}^{m}}h(x)d\lambda(x)=0$, the measure $\lambda$ must be the zero measure, which concludes the proof.\end{proof}

\begin{proof}[\textbf{Proof of Theorem \ref{infihypersecondstep}    }] Since the kernel $x,y \in \mathbb{H}\times \mathbb{H} \to [x,y] \in [1, \infty)$ is hyperbolic, $\log [x,y]$ is a conditionally negative definite kernel. If $2\log [x,y] = \log [x,x] + \log [y,y]$, then $[x,y]=1$, which only occur when $x=y$ because $d_{\mathbb{H}}(x,y)=0 $, implying that $\log [x,y]$ is a metrizable kernel (note that the same property occurs on the kernel $r\log [x,y]$, for $r>0$). Also, the function $\log [x,x]$ is a constant function,   Theorem \ref{integrallygaussianonmetric}  implies that the kernel
$$
(x,y) \in \mathbb{H} \times \mathbb{H} \to e^{-r\log [x,y]}= [x,y]^{-r}=\sech(d(x,y))^{r}  \in \mathbb{R}
$$
is ISPD for every $r>0$.
\end{proof}

\begin{proof}[\textbf{ Proof of Lemma \ref{hypGausskerinfty}}]
We only prove the case $r=1$, the others follows by a simple change of notation.\\
Suppose that $(i)$ holds, in particular, the function $e^{-\log [z, \xi_{0}]/2}$ also belongs to $C_{0}(X)$.  Since by Equation \ref{condequa} $\log[ z, w] = \|h(z) - h(w)\|^{2}$, by the parallelogram law, we have that  
$$
e^{-\|h(z) - h(\xi)\|^{2}}= e^{-\|h(z)-h(\xi_{0})\|^{2}/2 -  \|h(z)+h(\xi_{0}) - 2h(\xi)\|^{2}/2 + \|h(\xi)-h(\xi_{0})\|^{2}} \leq e^{\|h(\xi)-h(\xi_{0})\|^{2}}e^{-\log [z, \xi_{0}]/2},
$$
implying that the function $[z, \xi]^{-1}=e^{-\|h(z) - h(\xi)\|^{2}}$ belongs to $C_{0}(X)$ . It is immediate that $(ii)$ implies $(i)$\\ 
Relations $(ii)$ and $(iii)$ are equivalent by Proposition \ref{rkhscontained}.\\
If $(iv)$ holds, then for every $z \in X$ and $\epsilon >0$, the set $\{x \in X, \quad [x,z]^{-1}\geq \epsilon \}$ is bounded and closed on $X$, so it must be compact by the hypothesis implying that the function $H_{r, \xi} \in C_{0}(X)$ for every $\xi \in X$. For the converse, it is sufficient to show  that if $z \in X$ and $s>0$ then  the closed ball $d_{\mathbb{H}}(z, w)\leq s$ is compact, but this is  the set of points  that satisfies  $[z,w]^{-1} \geq (\cosh s )^{-1}$, which is compact by the $C_{0}$ hypothesis.\end{proof}


 Theorems \ref{hyperimprovSchoen}, \ref{integrallyhyperonmetric} and \ref{hyperkerc0infty} are a direct consequence of  the representation $L(x,y)^{-1}= e^{-\log L(x,y)}$ and Theorem \ref{improvSchoen}, \ref{integrallygaussianonmetric}  and \ref{gaussunboundc0} respectively, so we omit the proof.

\subsection{Dense algebras of bounded integrable functions on finite measures}\label{Dense algebras of bounded integrable functions on finite measures}

On this brief section we reprove a version of the  main result of \cite{Farrell}, but under the assumption that the functions involved are Borel measurable instead of Baire measurable and the measure is Radon and finite  instead of being $\sigma$-finite  and Baire. We also assume that the functions $h$ and $h_{i,j}$ are continuous and the sign properties holds everywhere on the set (on \cite{Farrell} it is only assumed the behaviour of its sign holds almost everywhere), this simple change simplifies the proof, but we remark that the continuity and the fact that the sign behaviour holds everywhere  is not a necessary hypothesis for the Theorem to hold.  

\begin{thm}\label{Farrelproof}Let $X$ be a  Hausdorff space and $\lambda \in \mathcal{M}(X)$ be a nonzero nonnegative measure. Let $\mathcal{A}$ be an algebra of  real valued functions in $L^{1}(\lambda)$ for which
\begin{enumerate}
    \item[(i)] Every $h \in \mathcal{A}$ belongs to $L^{\infty}(X)$.
    \item[(ii)] There exists  a continuous $h \in \mathcal{A}$ for which $h(x)>0$ for every $x \in X$.  
    \item[(iii)] There exists a basis $(U_{i})_{i \in \mathcal{I}}$ for the topology on $X$ such that if $U_{i}\cap U_{j}= \emptyset$ then for some continuous function $h_{i,j} \in \mathcal{A}$,  $h_{i,j}(x) > 0$ for $x \in U_{i}$ and $h_{i,j}(x) < 0$ for $x \in U_{j}$.
\end{enumerate}
Then the algebra $\mathcal{A}$ is dense on $L^{1}(\lambda)$
\end{thm}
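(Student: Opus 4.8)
The plan is to argue by duality and then run a Stone--Weierstrass type argument adapted to the measure $\lambda$. Since $(L^{1}(\lambda))^{*}=L^{\infty}(\lambda)$, the algebra $\mathcal{A}$ is dense in $L^{1}(\lambda)$ if and only if the only $g\in L^{\infty}(\lambda)$ with $\int_{X}hg\,d\lambda=0$ for every $h\in\mathcal{A}$ is $g=0$ $\lambda$-a.e. So I would fix such a $g$, set $\mu:=g\,d\lambda$ (a finite signed Radon measure, since $\lambda$ is Radon and $g$ is bounded), and aim to show $\mu=0$. The organising notion is the following invariant: call a bounded Borel function $f$ \emph{admissible} if $\int_{X}fh\,d\mu=0$ for all $h\in\mathcal{A}$ \emph{and} $\int_{X}f\,d\mu=0$. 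Every $h\in\mathcal{A}$ is admissible (because $fh\in\mathcal{A}$), and this is where hypothesis (ii) enters: the constant $1$ is admissible. Indeed, take $h_{0}\in\mathcal{A}$ continuous with $0<h_{0}\le 1$ (rescale); then $\min(nh_{0},1)$ is a uniform limit of polynomials in $h_{0}$ vanishing at $0$, hence of elements of $\mathcal{A}$, so each $\min(nh_{0},1)$ is admissible, and letting $n\to\infty$ (dominated convergence, $|\mu|(X)<\infty$) shows $1$ is admissible, and in particular $\mu(X)=0$. Let $\mathcal{G}$ be the set of admissible functions: then $\mathcal{G}\supseteq\mathcal{A}\cup\{1\}$, $\mathcal{G}$ is closed under finite linear combinations, under multiplication by elements of $\mathcal{A}$, and under bounded pointwise limits of sequences, and by construction $\int_{X}f\,d\mu=0$ for every $f\in\mathcal{G}$.

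Next I would show $\mathcal{G}$ is large enough to contain suitable Urysohn-type functions. Using Weierstrass approximation together with the fact that $\mathcal{A}$ is an algebra: for $h_{1},\dots,h_{r}\in\mathcal{A}$ and any continuous $\phi\colon\mathbb{R}^{r}\to\mathbb{R}$, the function $\phi(h_{1},\dots,h_{r})$ is a bounded uniform limit of polynomials $P_{n}(h_{1},\dots,h_{r})$; every monomial of degree $\ge 1$ lies in $\mathcal{A}$ and the constant term is a multiple of $1\in\mathcal{G}$, so $P_{n}(h_{1},\dots,h_{r})\in\mathcal{G}$ and hence $\phi(h_{1},\dots,h_{r})\in\mathcal{G}$. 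In particular, whenever a bounded function $g$ is itself a uniform limit of elements of $\mathcal{A}$, we get $\phi(g)\in\mathcal{G}$ for every continuous $\phi$.

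The heart of the proof is then a point-set construction carried out in the merely Hausdorff space $X$. Assume $\mu\neq 0$; hypothesis (ii) forbids $\mu\ge 0$ and $\mu\le 0$ (else $\int h_{0}\,d\mu\neq 0$), so both $\mu^{+}$ and $\mu^{-}$ are nonzero. Let $X=P\sqcup N$ be a Hahn decomposition. Here is the point where ``inner regularity on all measurable sets'' is used, replacing the Baire-measurability device of \cite{Farrell}: applying inner regularity of the finite measures $\mu^{+},\mu^{-}$ on the Borel sets $P,N$, choose compact sets $K^{+}\subseteq P$ and $K^{-}\subseteq N$ with $\mu^{+}(K^{+})>0$ and $\mu^{-}(X\setminus K^{-})<\mu^{+}(K^{+})/2$; these are disjoint compacta. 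Now invoke (iii): for $a\in K^{+}$ and $b\in K^{-}$ there are disjoint basic opens $U_{\alpha}\ni a$, $U_{\beta}\ni b$ (Hausdorffness plus the basis) and $h_{\alpha,\beta}\in\mathcal{A}$ with $h_{\alpha,\beta}>0$ on $U_{\alpha}$ and $<0$ on $U_{\beta}$. Fixing $a$ and covering $K^{-}$ by finitely many such $U_{\beta}$'s, the sum of the functions $\max(-h_{\alpha,\beta},0)$ gives a continuous $g_{a}\ge 0$ vanishing on an open neighbourhood $V_{a}\ni a$ and $\ge\delta_{a}>0$ on $K^{-}$; covering $K^{+}$ by finitely many $V_{a}$'s and multiplying the corresponding $g_{a}$'s yields a bounded continuous $g\ge 0$ with $g\equiv 0$ on $K^{+}$ and $g\ge\delta>0$ on $K^{-}$; moreover, since each building block is a uniform limit of elements of $\mathcal{A}$ (the functions are bounded by (i)), $g$ is a bounded uniform limit of elements of $\mathcal{A}$. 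Hence $f:=\max(1-g/\delta,0)\in\mathcal{G}$, $0\le f\le 1$, $f\equiv 1$ on $K^{+}$, $f\equiv 0$ on $K^{-}$, so $0=\int_{X}f\,d\mu\ge\mu^{+}(K^{+})-\mu^{-}(X\setminus K^{-})>0$, a contradiction. Therefore $\mu=0$ and $\mathcal{A}$ is dense in $L^{1}(\lambda)$.

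The obstacle I expect is bookkeeping rather than any single estimate. One must choose the invariant class $\mathcal{G}$ so that it contains $1$ (needed because $1\notin\mathcal{A}$ in general) yet remains stable under precisely the operations used — multiplication only by members of $\mathcal{A}$, never by arbitrary members of $\mathcal{G}$, together with uniform/dominated limits — and one must carry out the neighbourhood-and-compactness construction of the separating function $g$ from only a basis-indexed separating family, with no normality, local compactness, metrizability, or second countability at one's disposal. The Radon hypothesis enters at exactly two spots: continuity of the relevant $h$'s plus finiteness of $|\mu|$ supply all the convergence passages, while inner regularity on \emph{all} Borel sets is what allows the compact sets $K^{\pm}$ to be extracted from inside the Hahn sets $P,N$.
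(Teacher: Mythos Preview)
Your proof is correct and follows the same overall architecture as the paper's: reduce to showing that a bounded $\zeta\in L^{\infty}(\lambda)$ (equivalently, the signed measure $\mu=\zeta\,d\lambda$) annihilating $\mathcal{A}$ must vanish, pass to a class closed under lattice-type operations via polynomial approximation, use inner regularity of the Radon measure to extract disjoint compacta inside the Hahn sets, separate them with finitely many basic opens and the associated $h_{i,j}$, and build a Urysohn-type function that forces a contradiction. The differences are in execution rather than strategy. The paper works inside the $L^{1}$-closure $\overline{\mathcal{A}}$, observes it is a lattice, chooses \emph{nested sequences} of compacts $\mathcal{C}_{\pm,n}$ exhausting $X^{\pm}$ in measure, forms $h_{n}=\max_{i}\min_{j}h_{i,j}$, then $g_{n}=(\min(h_{n}/a_{n},h))^{+}$, and passes to $k=\inf_{n}g_{n}$; since $k>0$ a.e.\ on $X^{+}$ and $k=0$ a.e.\ on $X^{-}$, the identity $0=\int k\zeta^{+}\,d\lambda$ forces $\lambda(X^{+})=0$ directly. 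You instead introduce the admissible class $\mathcal{G}$, invest an extra step to show $1\in\mathcal{G}$ (hence $\mu(X)=0$), use a \emph{single} pair of compacts $K^{\pm}$ with the quantitative balance $\mu^{-}(X\setminus K^{-})<\tfrac{1}{2}\mu^{+}(K^{+})$, and assemble the separating function by sums of positive parts and products rather than by max--min. Your route trades the sequence/infimum machinery for the bookkeeping of $\mathcal{G}$ and the constant $1$; the paper's route avoids the admissibility device but needs the countable exhaustion to hit almost every point of $X^{\pm}$. Both uses of the Radon hypothesis coincide exactly with the two spots you flag: dominated/uniform convergence passages and inner regularity on arbitrary Borel sets.
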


\begin{proof}Relation $(i)$ ensures that products of functions in $\mathcal{A}$ are elements of $L^{1}(\lambda)$ by the Holder's inequality, which also implies that $\overline{\mathcal{A}}$ is an algebra on $L^{1}(\lambda)$\\
We show that $\mathcal{A}$ is dense in $L^{1}(\lambda)$ by showing that any continuous linear operator on $L^{1}(\lambda)$ that is zero on $\mathcal{A}$ is the zero operator. Indeed, let $I: L^{1}(\lambda) \to \mathbb{R}$ be a continuous operator that is zero on $\mathcal{A}$. Since $\lambda$ is finite there exists a function $\zeta \in L^{\infty}(\lambda)$ for which 
$$I(g)= \int_{X}g(x)\zeta(x)d\lambda(x)
=\int_{X}g(x)\zeta^{+}(x)d\lambda(x) - \int_{X}g(x)\zeta^{-}(x)d\lambda(x). 
$$
From this approach,  we can assume that $\mathcal{A}$ is a closed vector space. By a similar argument as the one in  Lemma $4.48$ in \cite{folland} page $140$, if $\phi, \psi \in \mathcal{A}$ then $\min(\psi, \phi)$, $\max(\psi, \phi)$, $\psi^{+}$ and $\psi^{-}$ belongs to $\mathcal{A}$. \\
We claim that the sets $X^{+}:=\{ x \in X , \quad \zeta(x)>0\}$ and $X^{-}:=\{ x \in X , \quad \zeta(x)<0\}$
 have $\lambda $ measure zero, which imply that $I$ is the zero functional. By the previous equality, it is sufficient to prove that $X^{+}$ has $\lambda$ measure zero.\\
 By the inner regularity of $\lambda$ on the sets  $X^{+}, X^{-}$, there exist two disjoint  sequences of nested compact sets $(\mathcal{C}_{+,n})_{n \in \mathbb{N}}$, $(\mathcal{C}_{-,n})_{n \in \mathbb{N}}$, for which
 $$
 \mathcal{C}_{+,n} \subset X^{+}, \lim_{n \to \infty}\lambda(\mathcal{C}_{+,n})= \lambda(X^{+}), \quad \mathcal{C}_{-,n} \subset X^{-}, \lim_{n \to \infty}\lambda(\mathcal{C}_{-,n})= \lambda(X^{-}).
 $$
Since $X$ is a Hausdorff space and the compact sets $\mathcal{C}_{+,n}$ and $\mathcal{C}_{-,n}$ are disjoint there exists disjoint open sets that separates them. Being the family of sets  $(U_{i})_{i \in \mathcal{I}}$ from relation $(iii)$ a basis for the topology on $X$, for every $n \in \mathbb{N}$ there exist  finite sets $F_{1,n}, F_{2,n} \subset \mathcal{I}$ such that   
$$
\mathcal{C}_{+,n}\subset \bigcup_{i \in F_{1,n}}U_{i}, \quad   \mathcal{C}_{-,n} \subset \bigcup_{j \in F_{2,n}}U_{j}   
$$ 
and $U_{i}\cap U_{j}= \emptyset $ if $i \in F_{1,n}$ and $j \in F_{2,n}$. The function $h_{n} := \max_{i \in F_{i,n}}(\min_{j \in F_{2,n}} h_{i,j}(x) ) \in \mathcal{A}$  is continuous,   $h_{n}(x) < 0$ on $\mathcal{C}_{-,n}$ and $h_{n}(x)>0$ on $\mathcal{C}_{+,n}$.\\ 
Let $a_{n}:= \min_{x \in \mathcal{C}_{+,n}}h_{n}(x)>0$, then  $g_{n}:=(\min(h_{n}/a_{n}, h))^{+} \in \mathcal{A}$,  and  $g_{n}(x) \geq \min (1, h(x))>0$ in $\mathcal{C}_{+,n}$ and $g_{n}(x)=0$ in $\mathcal{C}_{-,n}$. The function $k :=\inf_{n \in \mathbb{N}}g_{n}$ is well defined and is an element of  $\mathcal{A}$, because the infimum over the set $\{1,\ldots, m\}$ is a decreasing sequence of functions (bounded by $h$) in $\mathcal{A}$ and converges to $k$ in $L^{1}(\lambda)$ as $m$ goes to infinity by the dominated convergence theorem.\\
Note that $k>0$ almost everywhere on $X^{+}$ and $k=0$ almost everywhere on $X^{-}$, however
$$
0=I(k)=\int_{X}k(x)\zeta^{+}(x)d\lambda(x) - \int_{X}k(x)\zeta^{-}(x)d\lambda(x)=\int_{X}k(x)\zeta^{+}(x)d\lambda(x), 
$$
and $k\zeta^{+} \geq 0$, so $k\zeta^{+}$ is the zero function on $L^{1}(\lambda)$, which can only occur if $\lambda(X^{+})=0$.
\end{proof}

\bibliographystyle{siam}
\bibliography{Referrences.bib}

\end{document}